\def\btau{\boldsymbol{\tau}}
\def\bZ{\boldsymbol{Z}}
\def\bz{\boldsymbol{z}}
\def\ss{\boldsymbol{s}}
\newcommand{\U}{U}
\newcommand{\e}{\text{e}}
\def\varphiall{\@ifnextchar[{\varphiall@i}{\varphiall@i[]}}
\def\varphiall@i[#1]{\@ifnextchar[{\varphiall@ii{#1}}{\varphiall@ii{#1}[#1]}}
\def\varphiall@ii#1[#2]{\varphi_{\mu,\kappa,\beta_{#1},\sigma^2_{#2}}}
\def\hatvarphiall{\@ifnextchar[{\hatvarphiall@i}{\hatvarphiall@i[]}}
\def\hatvarphiall@i[#1]{\@ifnextchar[{\hatvarphiall@ii{#1}}{\hatvarphiall@ii{#1}[#1]}}
\def\hatvarphiall@ii#1[#2]{\widehat{\varphi}_{\mu,\kappa,\beta_{#1},\sigma^2_{#2}}}
\def\varphimukappa{\varphi_{\mu,\kappa}}
\def\R{\mathbb{R}}
\def\d{\textrm{d}}
\newtheorem{theo}{Theorem}
\newtheorem{lemma}{Lemma}
\renewcommand{\c}{\boldsymbol{c}}
\DeclareMathOperator*{\argmax}{arg\,max}
\DeclareMathOperator{\var}{Var}
\newcommand{\tlambda}{\tilde{\lambda}}
\definecolor{rf}{rgb}{1, 0.49, 0}
\begin{document}

\begin{frontmatter}
\title{Estimation and Prediction
 using generalized Wendland  Covariance Functions under fixed domain asymptotics}

\runtitle{Generalized Wendland  Covariance Functions}

\begin{aug}
    \author{\fnms{Moreno} \snm{Bevilacqua}\corref{}\thanksref{t1,m1}\ead[label=e1]{moreno.bevilacqua@uv.cl}},
  \author{\fnms{Tarik} \snm{Faouzi}\thanksref{m2}\ead[label=e2]{tfaouzi@ubiobio.cl}}
  \author{\fnms{Reinhard} \snm{Furrer}\thanksref{t3,m3}\ead[label=e3]{reinhard.furrer@math.uzh.ch}}%
  \and
  \author{\fnms{Emilio} \snm{Porcu}\thanksref{t4,m4}\ead[label=e4]{emilio.porcu@usm.cl}%
}
\runauthor{M. Bevilacqua et al.}
\thankstext{t1}{Supported by grant FONDECYT 1160280 from  Chilean government}
\thankstext{t3}{Acknowledges support of URPP-GCB and SNSF-175529}
\thankstext{t4}{Supported by grant FONDECYT 1130647 from  Chilean government}

  \affiliation{Universidad de Valparaiso\thanksmark{m1} Universidad del BioBio\thanksmark{m2}
  University of Zurich\thanksmark{m3} Universidad Federico Santa Maria\thanksmark{m4}, University of Newcastle\thanksmark{m4}}

\address{Department of Statistics\\ Universidad de Valparaiso\\
  2360102 Valparaiso, Chile\\ \printead{e1}}

  \address{Department of Statistics\\ Universidad del BioBio\\ 4081112
    Concepcion, Chile\\
    \printead{e2}\\\phantom{Universidad Federico Santa Maria}}

  \address{Department of Mathematics and\\ Department of Computational
    Science\\ University of Zurich\\ 8057 Zurich, Switzerland\\
    \printead{e3}}

  \address{Department of Mathematics\\ Universidad Federico Santa
    Maria\\ 2360102 Valparaiso, Chile\\ \printead{e4}}
    
\end{aug}

\begin{abstract} \hspace{0.1cm}    We study estimation and prediction of Gaussian random fields with
covariance models belonging to the generalized Wendland (GW) class,
under fixed domain asymptotics.  As for the Mat{\'e}rn case, this class
allows for a continuous parameterization of smoothness of the underlying
Gaussian random field, being additionally compactly supported.
The paper is divided into three parts: first, we
characterize the equivalence of two Gaussian measures with GW
covariance function, and we provide sufficient conditions for the
equivalence of two Gaussian measures with Mat{\'e}rn and GW covariance
functions. 
In the second part,
we establish strong consistency and asymptotic distribution of the
maximum likelihood estimator of the microergodic parameter associated
to GW covariance model, under fixed domain asymptotics.
The third part elucidates the consequences of our results in terms 
of (misspecified) best linear unbiased predictor, under fixed domain asymptotics.
Our findings are illustrated through a simulation study: the former
compares the finite sample behavior of the maximum likelihood
estimation of the microergodic parameter with the given asymptotic
distribution.  The latter compares the finite-sample behavior of the
prediction and its associated mean square error when using two
equivalent Gaussian measures with Mat{\'e}rn and GW covariance models,
using covariance tapering as benchmark.
\end{abstract}

\begin{keyword}[class=MSC]
\kwd[Primary ]{62M30}     
\kwd[; secondary ]{62F12} 
\kwd{60G25} 
\end{keyword}

\begin{keyword}
\kwd{compactly supported covariance}
\kwd{spectral density}
\kwd{large dataset}
\kwd{microergodic parameter}
\end{keyword}

\end{frontmatter}

\section{Introduction}
\def\blu{\textcolor{green}}
Covariance functions cover a central aspect of inference and
prediction of Gaussian fields defined over some (compact) set of
$\R^d$.  For instance, the best linear unbiased prediction at an
unobserved site depends on the knowledge of the covariance function.
Since a covariance function must be positive definite, practical
estimation generally requires the selection of some parametric classes
of covariances and the corresponding estimation of these parameters.

The maximum likelihood (ML) estimation method is generally considered
best for estimating the parameters of covariance models.
The study of asymptotic properties of ML estimators
is complicated by the fact that more than one asymptotic framework
can be considered when observing a single realization from a Gaussian
field. In particular, under fixed domain asymptotics, one supposes
that the sampling domain is bounded and that the sampling set becomes
increasingly dense. Under increasing domain asymptotics, the sampling domain
increases with the number of observed data, and the distance between
any two sampling locations is bounded away from zero.

The asymptotic behavior of ML estimators of the covariance parameters
can be quite different under these two frameworks
\citep{Zhang:Zimmerman:2005}. 
Under increasing domain asymptotic framework and some mild regularity conditions \cite{Mardia:Marshall:1984} give a general result. Specifically, they show that ML
estimators are consistent and asymptotically Gaussian, with asymptotic
covariance matrix equal to the inverse of the Fisher information
matrix.

Equivalence of Gaussian measures
\citep{Sko:ya:1973,Ibragimov-Rozanov:1978} represents an essential tool
to establish the asymptotic properties of both prediction and
estimation of Gaussian fields under fixed domain asymptotics.  In his
{\em tour de force}, \cite{Stein:1988,Stein:1990,Stein:1993,Stein:1999b,Stein:2004}
provides conditions under which predictions under a misspecified
covariance function are asymptotically efficient, and mean square
errors converge almost surely to their targets. Since Gaussian
measures depend exclusively on their mean and covariance functions,
practical evaluation of Stein's conditions translates into the fact
that the true and the misspecified covariances must be compatible,
i.e., the induced Gaussian measures are equivalent.

Under fixed domain asymptotics, no general results are available for
the asymptotic properties of ML estimators. Yet, some results have
been obtained when assuming that the covariance belongs to the
parametric family of Mat{\'e}rn covariance functions \citep{Mate:60}
that has been very popular in spatial statistics for its flexibility
with respect to continuous parameterization of smoothness, in the mean
square sense, of the underlying Gaussian field. For a Gaussian field defined over a bounded and infinite set of $\mathbb{R}^d$, \cite{Zhang:2004} shows that when the smoothness
parameter is known and fixed, not all parameters can be estimated
consistently when $d=1, 2, 3$. Instead, the ratio of variance and
scale (to the power of the smoothing parameter), sometimes called
microergodic parameter \citep{Zhang:Zimmerman:2005,Stein:1999}, is
consistently estimable.
In contrast for $d \geq 5$, \cite{anderes2010} proved  the  orthogonality of two Gaussian measures
with different Mat{\'e}rn covariance functions
and hence, in this case, all the parameters can be consistently estimated under fixed-domain asymptotics. The case $d = 4$
is still open.
%


Asymptotic results for ML estimator of the microergodic parameter of
the Mat{\'e}rn model can be found in \cite{Zhang:2004},
\cite{Du:Zhang:Mandrekar:2009}, \cite{Wang:Loh:2011} and
\cite{Shaby:Kaufmann:2013}.  In particular, \cite{Shaby:Kaufmann:2013}
give strong consistency and asymptotic distribution of the
microergodic parameter when estimating jointly the scale and
variance parameters, generalizing previous results in \cite{Zhang:2004}
and \cite{Wang:Loh:2011} where the scale parameter is assumed known
and fixed.  \cite{Shaby:Kaufmann:2013} show by means of simulation
study that asymptotic approximation using a fixed scale parameter
can be problematic when applied to finite samples, even for large
sample sizes. In contrast, they show that performance is improved and
asymptotic approximations are applicable for smaller sample sizes when
the parameters are jointly estimated.

Under the Mat{\'e}rn family, similar results have been obtained for
the covariance tapering (CT) method of estimation, as originally
proposed in \cite{Kaufman:Schervish:Nychka:2008} and consisting of
setting to zero the dependence after a given distance. This is in turn
achieved by multiplying the Mat{\'e}rn covariance with a taper
function, that is, a correlation function being additionally compactly
supported over a ball with given radius. Thus, the resulting
covariance tapered matrix is sparse, with the level of sparseness
depending on the radius of compact support.  Sparse matrix
algorithms can then be used to evaluate efficiently an approximate
likelihood where the original covariance matrix is replaced by the
tapered matrix.  The results proposed in
\cite{Kaufman:Schervish:Nychka:2008} have then inspired the works in
\cite{Du:Zhang:Mandrekar:2009}, \cite{Wang:Loh:2011} and
\cite{Shaby:Kaufmann:2013}, where asymptotic properties of the CT
estimator of the Mat{\'e}rn microergodic parameter are given.

Using the Mat{\'e}rn family, \cite{Furrer:2006}
study CT when applied to the best unbiased linear predictor and show
that under fixed domain asymptotics and some specific conditions of
the taper function, asymptotically efficient prediction and
asymptotically correct estimation of mean square error can be achieved
using a tapered Mat{\'e}rn covariance function.
Extensions have been discussed by, e.g.,  \cite{Stei:13} and \cite{Hirano2013}.
The basic message of CT method is that large data sets can be handled
both for estimation and prediction exploiting sparse matrix
algorithms when using the Mat{\'e}rn model.

Inspired by this idea, we focus on a covariance model that offers the
strength of the Mat{\'e}rn family and allows the use of sparse
matrices.  Specifically, we study estimation and prediction of
Gaussian fields under fixed domain asymptotics, using the generalized
Wendland (GW) class of covariance functions
\citep{Gneiting:2002b,Zastavnyi2006}, the  members of which are compactly supported
over balls of $\R^d$ with arbitrary radii, and additionally allows for
a continuous parameterization of differentiability at the origin, in a
similar way to the Mat{\'e}rn family.

In particular, we provide the following results.  First, we characterize
the equivalence of two Gaussian measures with covariance functions
belonging to the GW class and sharing the same smoothness parameter.
A consequence of this result is that, as in the Mat{\'e}rn case
\citep{Zhang:2004}, when the smoothness parameter is known and fixed,
not all parameters can be estimated consistently under fixed domain
asymptotics.  Then we give sufficient conditions for the equivalence
of two Gaussian measures where the state of truth is represented by a
member of the Mat{\'e}rn family and the other measure has a GW
covariance model and vice versa.

We assess the asymptotic properties of the ML estimator of the
microergodic parameter associated to the GW class.  Specifically, for
a fixed smoothness parameter, we establish strong consistency and
asymptotic distribution of the microergodic parameter estimate assuming the
compact support parameter fixed and known.  Then, we generalize these
results when jointly estimating with ML the variance and the compact
support parameter.

Finally, using results in \cite{Stein:1988,Stein:1993}, we study the
implications of our results on prediction under fixed domain
asymptotics.  One remarkable implication is that when the true
covariance belongs to the Mat{\'e}rn family, asymptotic efficiency
prediction and asymptotically correct estimation of mean square error
can be achieved using a compatible GW covariance model.

The remainder of the paper is organized as follows.  In Section~\ref{2}, we
review some results of Mat{\'e}rn and GW covariance models.  In
Section~\ref{3}, we first characterize the equivalence of Gaussian measure
under the GW covariance model.  Then we find a sufficient condition
for the equivalence of two Gaussian measures with Mat{\'e}rn and GW
covariance models.  In Section~\ref{4}, we establish strong consistency and
asymptotic distribution of the ML estimator of the microergodic
parameter of the GW models, under fixed domain asymptotics.  Section~\ref{5}
discusses the consequences of the results in Section~\ref{3} on prediction
under fixed domain asymptotics.  Section~\ref{6} provides two simulation
studies: The first shows how well the given asymptotic distribution of
the microergodic parameter applies to finite sample cases when
estimating with ML a GW covariance model under fixed domain
asymptotics.  The second compare the finite-sample behavior of the
prediction when using two compatible Mat{\'e}rn and GW models, using
CT as a benchmark.  The final section provides discussion on the
consequence of our results and identifies problems for future research.

\section{Mat{\'e}rn and generalized Wendland covariance models}\label{2}

We denote $\{Z(\ss), \ss \in D \} $ a zero mean Gaussian field on a
bounded set $D$ of $\R^d$, with stationary covariance function $C:\R^d
\to \R$. We consider the class $\Phi_d$ of continuous mappings
$\phi:[0,\infty) \to \R$ with $\phi(0) >0$, such that
\begin{equation*} 
{\rm cov} \left ( Z(\ss), Z(\ss^{\prime}) \right )= C(\ss^{\prime}-\ss)=  \phi(\|\ss^{\prime } -\ss \|),
\end{equation*}
with $\ss,\ss^{\prime} \in D$, and $\|\cdot\|$ denoting the Euclidean norm. Gaussian fields with such covariance functions are called weakly stationary and isotropic.

\cite{Shoe38} characterized the class $\Phi_d$ as scale mixtures
of the characteristic functions of random vectors uniformly
distributed on the spherical shell of $\R^d$, with any   positive measure, $F$:
$$ \phi(r)= \int_{0}^{\infty} \Omega_{d}(r \xi) F(\d \xi), \qquad r \ge 0,$$
with $\Omega_{d}(r)= r^{1-d/2}J_{d/2-1}(r)$ and $J_{\nu}$ a Bessel function of order $\nu$.
The class $\Phi_d$ is nested, with the inclusion relation $\Phi_{1} \supset \Phi_2 \supset \ldots \supset \Phi_{\infty}$ being strict, and where $\Phi_{\infty}:= \bigcap_{d \ge 1} \Phi_d$ is the class of continuous mappings $\phi$, the radial version of which is positive definite on any $d$-dimensional Euclidean space.

The Mat{\'e}rn function, defined as:
\begin{equation*}
{\cal M}_{\nu,\alpha,\sigma^2}(r)=
  \sigma^2 \frac{2^{1-\nu}}{\Gamma(\nu)} \left (\frac{r}{\alpha}
  \right )^{\nu} {\cal K}_{\nu} \left (\frac{r}{\alpha} \right ),
  \qquad r \ge 0,
\end{equation*}
is a member of the class
$\Phi_{\infty}$ for any positive values of $\alpha$ and $\nu$. Here,
${\cal K}_{\nu}$ is a modified Bessel function of the second kind of
order $\nu$, $\sigma^2$ is the variance and $\alpha$ a positive
scaling parameter.
The parameter $\nu$ characterizes the
differentiability at the origin and, as a consequence,  the
differentiability of the associated sample paths. In particular for a
positive integer $k$, the sample paths are $k$ times differentiable, in any direction, if
and only if $\nu>k$.

When $\nu=1/2+m$ and $m$ is a nonnegative integer, the Mat{\'e}rn
function simplifies to the product of a negative exponential with a
polynomial of degree $m$, and for $\nu$ tending to infinity, a rescaled
version of the Mat{\'e}rn converges to a squared exponential model being
infinitely differentiable at the origin. Thus, the Mat{\'e}rn function
allows for a continuous parameterization of its associated Gaussian
field in terms of smoothness.

%
%
We also define $\Phi_{d}^{b}$ as the class that consists of members of $\Phi_d$ being additionally compactly supported on a given interval, $[0,b]$, $b>0$. Clearly, their radial versions are compactly supported over balls of $\R^d$ with radius $b$.


We now  define GW  correlation functions $\varphimukappa$ as introduced by \cite{Gneiting:2002}, \cite{Zastavnyi2006} and  \cite{Hubb14}. For $\kappa>0$, we define
\begin{equation} \label{WG2*}
\varphimukappa(r):= \begin{cases}  \frac{1}{B(2\kappa,\mu+1)} \int_r^{1} u(u^2-r^2)^{\kappa-1} (1-u)^{\mu}\,\d u  ,& 0 \leq r < 1,\\ 0,&r \geq 1, \end{cases}
\end{equation}
with $B$ denoting the beta function.  Arguments in \cite{Gneiting:2002} and  \cite{Zastavnyi2006} show that, for a given $\kappa>0$, $\varphimukappa \in \Phi^1_d$
if and only if
\begin{equation} \label{lawea}
\mu \ge \lambda(d,\kappa):= (d+1)/2+ \kappa.
\end{equation}
Throughout, we use $\lambda$ instead of $\lambda(d,\kappa)$ whenever no confusion arises.
Integration by parts shows that the first part of \eqref{WG2*} can also
 be written as:
\begin{equation*} 
\frac{1}{B( 1+2\kappa,\mu)} \int_r^{1} (u^2-r^2)^{\kappa} (1-u)^{\mu-1}\,\d u,\qquad  0 \leq r < 1. 
 \end{equation*}
Note that  $\varphi_{\mu,0}$ is not defined because $\kappa$ must be strictly positive. In this special case 
we consider
the Askey function \citep{Askey:1973} 
\begin{equation*} 
{\cal A}_{\mu}(r)=\left ( 1- r \right )_{+}^{\mu} =   \begin{cases}   \left ( 1- r \right )^{\mu} ,& 0 \leq r < 1,\\ 0,&r \geq 1, \end{cases}
\end{equation*}
where $(\cdot)_+$ denotes the positive part.
Arguments in \cite{Golubov:1981} show that ${\cal A}_{\mu} \in \Phi_d^1$ if and only if $\mu \ge (d+1)/2$
and   we define $\varphi_{\mu,0}:= {\cal A}_{\mu}$.

Finally, we define the GW covariance function, with compact support $\beta>0$, variance $\sigma^2$  and
smoothness parameter $\kappa >0$ as:
\begin{equation}\label{wendgen}
\varphiall(r):= \sigma^2 \varphimukappa(r/\beta),  \quad r \ge 0,
\end{equation}
and $\varphiall\in \Phi_d^{\beta}$ if and only if  $\mu \ge \lambda$. Accordingly, for $\kappa=0$, we define
\begin{equation}\label{wendgen2}
\varphi_{\mu,0,\beta,\sigma^2}(r):=\sigma^2 \varphi_{\mu,0} (r/\beta), \quad  r \ge 0.
\end{equation}

When computing~\eqref{wendgen}, numerical integration is obviously
feasible, but could be cumbersome to (spatial) statisticians used to
handle closed form parametric covariance model.  Nevertheless, closed
form solution of the integral in Equation~\eqref{WG2*} can be obtained when
$\kappa=k$, a positive integer.  In this  case,
 $ \varphi_{\mu,k,1,1}(r) =
{\cal A}_{\mu+k}(r) P_{k}(r)$,
with $P_{k}$ a polynomial of order $k$.
These functions, termed (original) Wendland functions, were originally proposed by   \cite{Wendland:1995}.
Other closed form solutions of integral~\eqref{WG2*} can be obtained when $\kappa=k+0.5$, using some results in \cite{Schaback:2011}. Such solutions are called 
 \emph{missing Wendland}  functions.

  Recently, \cite{Porcu:Zastavnyi:Xesbaiat} have shown that the GW class includes almost all classes of covariance functions with compact supports known to the geostatistical and numerical analysis communities. Not only original and Wendland functions, but also Wu's functions \citep{wu95}, which in turn include the spherical model \citep{Wackernagel:2003}, as well as the Trigub splines \citep{Zastavnyi2006}.  Finally, \cite{che14}  show that,   for $\kappa$ tending to infinity, a rescaled   version of the  GW model converges  to 
 a squared exponential  covariance model.

 As noted by  \cite{Gneiting:2002b}, GW and Mat{\'e}rn functions exhibit the same behavior at the origin,
with the smoothness parameters of the two covariance models related by the equation $\nu=\kappa+1/2$.

\begin{table}[t!]
\caption{GW correlation  $\varphi_{\mu,\kappa,1,1}(r)$ and Mat{\'e}rn correlation  ${\cal M}_{\nu,1,1}(r)$  with increasing smoothness parameters $\kappa$ and $\nu$.
$SP(k)$ means that the sample paths of the  associated  Gaussian field
are  $k$ times differentiable. }\label{tab1}
\centering
{\scriptsize\tabcolsep2pt \linespread{2}\selectfont\begin{tabular}{|c|l|c|l|c|}
\hline
~$\kappa$~& $\varphi_{\mu,\kappa,1,1}(r)$ &  $\nu$&${\cal M}_{\nu,1,1}(r)$ &  $SP(k)$
\raisebox{0pt}[18pt][-7pt]{} \\
\hline
$0$ & $(1-r)^{\mu}_{+}$ & $0.5$&   $e^{-r}$ & 0\\
\hline
$1$ & $(1-r)^{\mu+1}_{+}(1+r(\mu+1))$ & $1.5$ & $ e^{-r}(1+r)$ &1 \\
\hline
$2$ & $(1-r)^{\mu+2}_{+}(1+r(\mu+2)+r^2(\mu^2+4\mu+3)\frac{1}{3})$ &$2.5$ &$ e^{-r}(1+r+\frac{r^{2}}{3})$&2\\
\hline
\multirow{2}{*}{$3$} & $(1-r)^{\mu+3}_{+}
\big( 1+r(\mu+3)+r^2(2\mu^2+12\mu+15)\frac{1}{5}\qquad $
&\multirow{2}{*}{$3.5$}&\multirow{2}{*}{$ e^{-r}(1+\frac{r}{2}+r^2 \frac{6}{15}+\frac{r^3}{15})$}&\multirow{2}{*}{$3$}\\
& $\hspace*{2.8cm} +r^3(\mu^3+9\mu^2+23\mu+15)\frac{1}{15}\big)\,$ &&&\\
\hline
\end{tabular} }
\end{table}

Fourier transforms of radial versions of members of $\Phi_d$, for a given $d$, have a simple expression, as reported in \citet{Yaglom:1987} or \citet{Stein:1999}. For a member $\phi$ of the class $\Phi_d$, we define its isotropic spectral density as
\begin{equation} \label{FT}
 \widehat{\phi}(z)= \frac{z^{1-d/2}}{(2 \pi)^d} \int_{0}^{\infty} u^{d/2} J_{d/2-1}(uz)  \phi(u) {\rm d} u, \qquad z \ge 0,
\end{equation}
and throughout the paper, we use the notations: $\widehat{{\cal M}}_{\nu,\alpha,\sigma^2}$, and $\hatvarphiall$ for the
radial parts of  Fourier transforms
of ${{\cal M}}_{\nu,\alpha,\sigma^2}$ and $\varphiall $, respectively.

A well-known result about the spectral density of the  Mat{\'e}rn model is the following:
\begin{equation} \label{stein1}
\widehat{{\cal M}}_{\nu,\alpha,\sigma^2}(z)= \frac{\Gamma(\nu+d/2)}{\pi^{d/2} \Gamma(\nu)}
\frac{\sigma^2 \alpha^d}{(1+\alpha^2z^2)^{\nu+d/2}}
, \qquad z \ge 0.
\end{equation}
For two given non negative    functions $g_1(x)$ and $g_2(x)$, with $g_1(x) \asymp g_2(x)$ we mean
that there exist two constants $c$ and $C$
such that $0<c< C<\infty$ and $cg_2(x) \leq g_1(x) \leq Cg_2(x)$ for
each $x$.
The next result follows from \cite{Zastavnyi2006}, \cite{Hubb14}, and from standard properties of Fourier transforms. Their proofs are thus omitted.
Let us first define the function  $\mathstrut_1 F_2$  as:
\begin{equation*}
\mathstrut_1 F_2(a;b,c;z)=\sum_{k=0}^{\infty}\frac{(a)_{k}z^{k}}{(b)_{k}(c)_{k}k!}, \qquad z \in\mathbb{R},
\end{equation*}
which is a special case of the generalized hypergeometric functions $\mathstrut_q F_p$ \citep{Abra:Steg:70}, with
$(q)_{k}=  \Gamma(q+k)/\Gamma(q)$ for $k \in  \mathbb{N}\cup\{ 0\}$, being the Pochhammer symbol.

\begin{theo}\label{the3} Let $\varphiall$ be the function defined at Equation~\eqref{wendgen} and 
let  $\lambda$ as defined through Equation~\eqref{lawea}.
Then, for  $\kappa,\sigma^2,\beta>0$ and $\mu \geq \lambda$:
\begin{enumerate}
\item\label{3.1}
$\displaystyle\hatvarphiall(z)=\sigma^{2}L^{\boldsymbol{\varsigma}}\beta^{d}\mathstrut_1 F_2\Big(\lambda;\lambda+\frac{\mu}{2},\lambda+\frac{\mu}{2}+\frac{1}{2};-\frac{(z\beta)^{2}}4\Big), \quad  z > 0;$\\
\item\label{3.2}
$\displaystyle\hatvarphiall(z)=\sigma^2L^{\boldsymbol{\varsigma}}\beta^{d}\Big[c_{3}^{\boldsymbol{\varsigma}}(z\beta)^{-2\lambda}\big\{1+\mathcal{O}(z^{-2})\big\}$\\
\hspace*{1mm} ~\hfill$\displaystyle+\,c_{4}^{\boldsymbol{\varsigma}}(z\beta)^{-(\mu+\lambda)}\big\{\cos(z\beta-c_{5}^{\boldsymbol{\varsigma}})+\mathcal{O}(z^{-1})\big\}\Big]$,~
for $z\to\infty$;\\
\item\label{3.3} $\hatvarphiall(z)\asymp z^{-2\lambda} $,~  for $z\to\infty,$
\end{enumerate}
where $c_{3}^{\boldsymbol{\varsigma}}=\frac{\Gamma(\mu+2\lambda)}{\Gamma(\mu)}$, $c_{4}^{\boldsymbol{\varsigma}}=\frac{\Gamma(\mu+2\lambda)}{\Gamma(\lambda)2^{\lambda-1}}$, $c_{5}^{\boldsymbol{\varsigma}}=\frac{\pi}{2}(\mu+\lambda)$, ${L^{\boldsymbol{\varsigma}}=\frac{K^{\boldsymbol{\varsigma}}\Gamma(\kappa)}{2^{1-\kappa}B(2\kappa,\mu+1)}}$
and
\begin{equation*}
K^{\boldsymbol{\varsigma}}=\frac{2^{-\kappa-d+1}\pi^{-\frac{d}{2}}\Gamma(\mu+1)\Gamma(2\kappa+d)}{\Gamma(\kappa+\frac{d}{2})\Gamma(\mu+2\lambda)},
\end{equation*}
where $\boldsymbol{\varsigma}:= (\mu,\kappa,d)^{\prime}$.
\end{theo}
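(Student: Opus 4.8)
The plan is to establish item~\ref{3.1} by directly evaluating the Hankel transform~\eqref{FT} of the generalized Wendland function, and then to deduce items~\ref{3.2} and~\ref{3.3} from the large-argument asymptotics of the resulting $\mathstrut_1F_2$ function. Throughout I would exploit the scaling relation $\varphiall(r)=\sigma^2\varphimukappa(r/\beta)$ from~\eqref{wendgen}: since the Hankel transform of $r\mapsto\phi(r/\beta)$ equals $\beta^d$ times the transform of $\phi$ evaluated at $z\beta$, it suffices to compute $\widehat{\varphi}_{\mu,\kappa}(z)$ for the unit-support, unit-variance function and then reinstate the factor $\sigma^2\beta^d$ and replace $z$ by $z\beta$. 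This already accounts for the prefactor $\sigma^2\beta^d$ and the argument $-(z\beta)^2/4$ in item~\ref{3.1}.

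For item~\ref{3.1}, I would insert the integral representation~\eqref{WG2*} of $\varphimukappa$ into~\eqref{FT} and interchange the order of integration (legitimate by Fubini, since the integrand is absolutely integrable over the compact triangle $0\le r\le u\le1$). The inner integral $\int_0^u r^{d/2}(u^2-r^2)^{\kappa-1}J_{d/2-1}(rz)\,\d r$ is then evaluated in closed form by the classical Sonine finite integral, which returns $2^{\kappa-1}\Gamma(\kappa)\,u^{\kappa+d/2-1}z^{-\kappa}J_{\kappa+d/2-1}(uz)$. This reduces the problem to the single integral $\int_0^1 u^{\kappa+d/2}(1-u)^{\mu}J_{\kappa+d/2-1}(uz)\,\d u$. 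Expanding the Bessel function in its defining power series, integrating term by term against $(1-u)^{\mu}$ via the beta function, and rewriting the emerging $\Gamma(2\kappa+d+2m)$ and $\Gamma(\mu+2\lambda+2m)$ through the Legendre duplication formula, I expect the $4^m$ factors to cancel and the series to collapse to $\sum_m (\lambda)_m\big[(\lambda+\mu/2)_m(\lambda+\mu/2+\tfrac12)_m\big]^{-1}(-z^2/4)^m/m!$, i.e.\ exactly the $\mathstrut_1F_2$ claimed (using $\kappa+(d+1)/2=\lambda$ for the top parameter). Collecting the numerical prefactors from~\eqref{FT}, the beta normalisation in~\eqref{WG2*}, the Sonine constant and the $m=0$ beta value then yields $L^{\boldsymbol{\varsigma}}$, with $K^{\boldsymbol{\varsigma}}$ absorbing the remaining $\Gamma$-ratios; the powers of $z$ cancel exactly, as they must, since the transform is a constant times the $\mathstrut_1F_2$.

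For item~\ref{3.2}, I would feed the closed form of item~\ref{3.1} into the large-argument asymptotic expansion of the generalized hypergeometric function $\mathstrut_1F_2(a;b_1,b_2;-x)$ as $x=(z\beta)^2/4\to\infty$ \citep{Abra:Steg:70}. Such an expansion has two ingredients: an \emph{algebraic} series, governed by the single numerator parameter $a=\lambda$, contributing a term of order $x^{-\lambda}\asymp(z\beta)^{-2\lambda}$ with relative corrections in powers of $x^{-1}\asymp z^{-2}$, which produces the first bracket $c_3^{\boldsymbol{\varsigma}}(z\beta)^{-2\lambda}\{1+\mathcal{O}(z^{-2})\}$; and an \emph{oscillatory} (Stokes) series carrying the factor $\cos(2\sqrt{x}+\text{phase})=\cos(z\beta-c_5^{\boldsymbol{\varsigma}})$, whose algebraic envelope has exponent $\tfrac12(a-b_1-b_2)+\tfrac14=-\tfrac12(\mu+\lambda)$, hence order $(z\beta)^{-(\mu+\lambda)}$ with $\mathcal{O}(z^{-1})$ corrections. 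Reading off the leading coefficients of the two series gives $c_3^{\boldsymbol{\varsigma}}$, $c_4^{\boldsymbol{\varsigma}}$ and the phase $c_5^{\boldsymbol{\varsigma}}$ as stated. Equivalently, one may bypass the hypergeometric machinery and analyse $\int_0^1 u^{\kappa+d/2}(1-u)^{\mu}J_{\kappa+d/2-1}(uz)\,\d u$ directly: the oscillatory term arises from the endpoint $u=1$, where the algebraic singularity $(1-u)^{\mu}$ combined with the $\mathcal{O}(z^{-1/2})$ Bessel asymptotics yields, via Erd\'{e}lyi's lemma, a $\cos(z\beta-\cdot)$ contribution of precisely the stated order once the external factor $z^{1-d/2-\kappa}$ is reinstated.

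Finally, item~\ref{3.3} is an immediate corollary of item~\ref{3.2}. Since $\mu\ge\lambda$ by hypothesis, the oscillatory exponent satisfies $\mu+\lambda\ge2\lambda$, so the oscillatory term decays no slower than the algebraic one; as $c_3^{\boldsymbol{\varsigma}}=\Gamma(\mu+2\lambda)/\Gamma(\mu)>0$, the algebraic term is a strictly positive constant times $(z\beta)^{-2\lambda}$, and in the boundary case $\mu=\lambda$ one checks $c_3^{\boldsymbol{\varsigma}}/c_4^{\boldsymbol{\varsigma}}=2^{\lambda-1}>1$ (because $\lambda>1$), so the non-oscillating leading term cannot be cancelled. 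This delivers the two-sided bound $\hatvarphiall(z)\asymp z^{-2\lambda}$. I expect the only genuinely delicate step to be the oscillatory part of item~\ref{3.2}: fixing the Stokes coefficient $c_4^{\boldsymbol{\varsigma}}$ and the phase $c_5^{\boldsymbol{\varsigma}}$ (together with the exact error orders) is what truly requires the asymptotic theory of $\mathstrut_1F_2$, whereas item~\ref{3.1} is routine special-function bookkeeping and item~\ref{3.3} is essentially free.
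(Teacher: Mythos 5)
Your reconstruction is correct and follows exactly the route that the paper delegates to the literature: the paper gives no proof of its own, merely citing Zastavnyi (2006) for Point~1 and Hubbert (2014) for Points~2--3, and those references proceed precisely as you do (Sonine's finite integral plus term-by-term Beta/duplication bookkeeping to obtain the ${}_1F_2$ form, then the standard large-argument asymptotics of ${}_1F_2$ for the algebraic and oscillatory contributions, whose envelope exponent $\tfrac12(a-b_1-b_2)+\tfrac14=-\tfrac12(\mu+\lambda)$ you compute correctly). Your verification that $c_3^{\boldsymbol{\varsigma}}/c_4^{\boldsymbol{\varsigma}}=2^{\lambda-1}>1$ in the boundary case $\mu=\lambda$ is the one detail needed to get the two-sided bound in Point~3 that is easy to overlook, and you have it right.
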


Point~\ref{3.1} has been shown by \cite{Zastavnyi2006}. Points~\ref{3.2} and~\ref{3.3} can be found in \cite{Hubb14}.
Note that the case $\kappa=0$ is not included in
Theorem~\ref{the3}. We consider it in the following result, whose
proof follows the lines of \cite{Zastavnyi2006} and \cite{Hubb14} for the case $\kappa>0$.

\begin{theo}\label{the356} Let $ \varphi_{\mu,0,\beta,\sigma^2}$ as being defined at Equation~\eqref{wendgen2}. Then, for  $\sigma^2,\beta>0$, $\mu \geq (d+1)/2$:
\begin{enumerate}
\item\label{4.1} 
$\displaystyle
 \widehat{ \varphi}_{\mu,0,\beta,\sigma^2}(z)=\sigma^{2}K^{\boldsymbol{\varsigma}}\beta^{d}$\\
\hspace*{1mm} ~\hfill$\displaystyle \times\
\mathstrut_1 F_2\left(\frac{d+1}2;\frac{d+1}2+\frac{\mu}{2},\frac{d+1}2+\frac{\mu}{2}+\frac{1}{2};-\frac{(z\beta)^{2}}4\right), \quad  z > 0$;\\
\item\label{4.2}  
$\displaystyle
 \widehat{ \varphi}_{\mu,0,\beta,\sigma^2}(z)=\sigma^{2}K^{\boldsymbol{\varsigma}}\beta^{d}\big[c_{3}^{\boldsymbol{\varsigma}}(z\beta)^{-(d+1)}\{1+\mathcal{O}(z^{-2})\}$\\
\hspace*{1mm} ~\hfill$\displaystyle+\,c_{4}^{\boldsymbol{\varsigma}}(z\beta)^{-(\mu+(d+1)/2)}\{\cos(z\beta-c_{5}^{\boldsymbol{\varsigma}})+\mathcal{O}(z^{-1})\}\big],
$~  
for $z\to\infty$;
\item\label{4.3}$ \widehat{ \varphi}_{\mu,0,\beta,\sigma^2}(z)\asymp z^{-(d+1)}$,~ for $ z\to\infty$,
\end{enumerate}
with  $c_3^{\boldsymbol{\varsigma}}$,    $c_4^{\boldsymbol{\varsigma}}$,   $c_5^{\boldsymbol{\varsigma}}$ and $K^{\boldsymbol{\varsigma}}$
defined as in Theorem~\ref{the3} but with $\boldsymbol{\varsigma}:= (\mu,0,d)^{\prime}$.
\end{theo}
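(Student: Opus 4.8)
The plan is to mirror, for $\kappa=0$, the computation behind Theorem~\ref{the3}, the only structural change being that the degenerate normalization $B(2\kappa,\mu+1)$ in \eqref{WG2*} (the very reason $\kappa=0$ is excluded there) disappears and one works directly with the Askey kernel $\varphi_{\mu,0}(u)=(1-u)_+^{\mu}$. First I would reduce to the unit-scale, unit-variance case: by linearity in $\sigma^2$ and the scaling rule of the isotropic transform \eqref{FT}, $\widehat{\varphi}_{\mu,0,\beta,\sigma^2}(z)=\sigma^2\beta^d\,\widehat{\varphi}_{\mu,0,1,1}(z\beta)$, so it suffices to evaluate the Hankel transform of $(1-u)_+^{\mu}$ and then restore $\sigma^2$, $\beta^d$ and the dilation $z\mapsto z\beta$.

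For point~\ref{4.1} I would insert the power series $J_{d/2-1}(uz)=\sum_{k\ge0}\frac{(-1)^k}{k!\,\Gamma(d/2+k)}(uz/2)^{d/2-1+2k}$ into \eqref{FT}, interchange sum and integral (legitimate since the Bessel series converges uniformly on the compact support $[0,1]$ and $(1-u)^{\mu}$ is bounded there), and evaluate each term with the Euler beta integral $\int_0^1 u^{d+2k-1}(1-u)^{\mu}\,\d u=B(d+2k,\mu+1)$. The resulting gamma ratio $\Gamma(d+2k)/[\Gamma(d/2+k)\Gamma(d+\mu+1+2k)]$ is then rewritten by applying the Legendre duplication formula to both $\Gamma(d+2k)$ and $\Gamma(d+\mu+1+2k)$; this exposes Pochhammer symbols with parameters $\tfrac{d+1}2$, $\tfrac{d+1}2+\tfrac{\mu}2$, $\tfrac{d+1}2+\tfrac{\mu}2+\tfrac12$ and a factor $(-(z\beta)^2/4)^k/k!$. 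Collecting the $k$-independent constants reproduces exactly $\sigma^2K^{\boldsymbol{\varsigma}}\beta^d$ with $\boldsymbol{\varsigma}=(\mu,0,d)^{\prime}$, and the remaining series is the asserted $\mathstrut_1 F_2$; this is point~\ref{4.1}. As a sanity check, letting $\kappa\to0$ in $L^{\boldsymbol{\varsigma}}=K^{\boldsymbol{\varsigma}}\Gamma(\kappa)/(2^{1-\kappa}B(2\kappa,\mu+1))$ gives $L^{\boldsymbol{\varsigma}}\to K^{\boldsymbol{\varsigma}}$, matching the prefactor of Theorem~\ref{the3}.

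For points~\ref{4.2} and~\ref{4.3} I would feed the closed form into the large-argument asymptotics of $\mathstrut_1 F_2(a;b,c;-x^2/4)$ used by \cite{Hubb14} in the case $\kappa>0$, now with $a=\tfrac{d+1}2$, which is the value of $\lambda$ in \eqref{lawea} at $\kappa=0$. That expansion splits into an algebraic part of order $x^{-2a}=(z\beta)^{-(d+1)}$ and an oscillatory part of order $x^{-(b+c-a-1/2)}=(z\beta)^{-(\mu+(d+1)/2)}$ carrying the phase shift $c_5^{\boldsymbol{\varsigma}}$; matching the leading coefficients yields $c_3^{\boldsymbol{\varsigma}}$ and $c_4^{\boldsymbol{\varsigma}}$, giving point~\ref{4.2}. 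Point~\ref{4.3} then follows by reading off orders: for $\mu>(d+1)/2$ the oscillatory term is $o(z^{-(d+1)})$, so the strictly positive algebraic term $c_3^{\boldsymbol{\varsigma}}(z\beta)^{-(d+1)}$ dominates and yields $\widehat{\varphi}_{\mu,0,\beta,\sigma^2}(z)\asymp z^{-(d+1)}$.

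The main obstacle is the asymptotic step, namely obtaining point~\ref{4.2} with the exact constants $c_3^{\boldsymbol{\varsigma}},c_4^{\boldsymbol{\varsigma}},c_5^{\boldsymbol{\varsigma}}$. The oscillatory contribution, whose frequency is set by the support edge $u=1$, is governed by the order to which $(1-u)^{\mu}$ vanishes there, while the dominant algebraic term comes from the smooth lower-endpoint part of the transform; the delicate work is to carry the multiplicative constants through the duplication-formula rearrangement and the $\mathstrut_1 F_2$ expansion so that they coincide with the $\kappa>0$ formulas of \cite{Hubb14} in the limit. A secondary delicate point is the critical case $\mu=(d+1)/2$, where the algebraic and oscillatory terms decay at the same rate: there the lower bound in the relation $\asymp$ requires $c_3^{\boldsymbol{\varsigma}}+c_4^{\boldsymbol{\varsigma}}\cos(\cdot)$ to stay bounded away from zero, which I would argue from nonnegativity of the spectral density together with the next-order corrections, exactly as in the $\kappa>0$ analysis.
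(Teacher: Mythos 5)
Your proposal is correct and follows essentially the same route as the paper, which does not spell out a proof at all but simply states that the argument ``follows the lines of \cite{Zastavnyi2006} and \cite{Hubb14} for the case $\kappa>0$'' --- i.e., exactly the term-by-term Hankel/Bessel series computation giving the $\mathstrut_1 F_2$ closed form, followed by its large-argument asymptotics, that you carry out. Your explicit treatment (the scaling reduction, the beta-integral and duplication-formula steps, the consistency check $L^{\boldsymbol{\varsigma}}\to K^{\boldsymbol{\varsigma}}$, and the flagged delicacy at the boundary $\mu=(d+1)/2$ where the algebraic and oscillatory terms are of the same order) is in fact more detailed than anything the paper provides.
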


The  spectral density  and its decay for $z\to \infty$ in Theorems~\ref{the3} and~\ref{the356} are  useful when studying some geometrical properties of a Gaussian field or its associated sample paths \citep{Adler:1981}.
For instance, using Theorem~\ref{the3} Point~\ref{3.1} or~Theorem~\ref{the356} Point~\ref{4.1},  it is easy to prove  that for a positive integer $k$, the sample paths  of a Gaussian field with GW function are $k$ times differentiable, in any direction, if and only if  $\kappa >k-1/2$.
Table~\ref{tab1}   compares the GW $\varphi_{\mu,\kappa,1,1}(r)$  for $\kappa=0,1,2,3$  with 
${\cal M}_{\nu,1,1}(r)$ for $\nu=0.5,1.5,2.5,3.5$ with the associated  degree of sample paths differentiability.


\section{Equivalence of Gaussian measures with GW models}\label{3}

Equivalence and orthogonality of probability measures are useful tools when assessing the asymptotic properties of both prediction and estimation for Gaussian fields.
Denote with $P_i$, $i=0,1$, two probability measures defined on the same
 measurable space $\{\Omega, \cal F\}$. $P_0$ and $P_1$ are called equivalent (denoted $P_0 \equiv P_1$) if $P_1(A)=1$ for any $A\in \cal F$ implies $P_0(A)=1$ and vice versa. On the other hand,  $P_0$ and $P_1$ are orthogonal (denoted $P_0 \perp P_1$) if there exists an event $A$ such that $P_1(A)=1$ but $P_0(A)=0$. For a stochastic process $\{ Z(\ss), \ss \in \R^d \}$, to define previous concepts, we restrict the event $A$ to the $\sigma$-algebra generated by $\{Z(\ss), \ss\in D\}$, where $D \subset \R^d$. We emphasize this restriction by saying that the
two measures are equivalent on the paths of $\{Z(\ss), \ss\in D\}$.

 Gaussian measures are completely characterized by their mean and covariance function.
We write $P(\rho)$ for a Gaussian measure with zero mean and covariance function $\rho$.  It is well known that two Gaussian measures  are either equivalent or orthogonal on the paths of $\{Z(\ss), \ss\in D\}$ \citep{Ibragimov-Rozanov:1978}.

Let $P(\rho_i)$, $i=0, 1$ be two zero mean Gaussian measures with  isotropic covariance function  $\rho_i$ and  associated spectral density $\widehat{\rho}_i$,  $i=0, 1$, as defined through~\eqref{FT}.
Using results in  \cite{Sko:ya:1973} and  \cite{Ibragimov-Rozanov:1978},  \cite{Stein:2004}
has shown that, if for some $a>0$,
$\widehat{\rho}_0(z)z^a$ is bounded away from 0 and $\infty$ as $z \to \infty$, and
for some finite and positive~$c$,
\begin{equation}\label{spectralfinite2}
\int_{c}^{\infty} z^{d-1} \;\left\{ \frac{\widehat{\rho}_1(z)-\widehat{\rho}_0(z)}{\widehat{\rho}_0(z)} \right\}^2\;
{\rm d} z <\infty,
\end{equation}
then for any bounded subset
$D\subset \R^d$, $P(\rho_0)\equiv P(\rho_1)$ on the paths of $Z(\ss), \ss \in D$.

For the reminder of the paper, we denote with $P({\cal M}_{\nu,\alpha,\sigma^2})$ a zero mean Gaussian measure induced by a Mat{\'e}rn covariance function
with associated spectral density $\widehat{{\cal M}}_{\nu,\alpha,\sigma^2}$,
 and
with $P( \varphiall)$
 a zero mean Gaussian measure induced by a GW covariance function with associated spectral density $\hatvarphiall$.

Using~\eqref{spectralfinite2} and  \eqref{stein1}, \cite{Zhang:2004} established the following characterization concerning the equivalent conditions
of two Gaussian measures with Mat{\'e}rn covariance models.

\begin{theo}[\citealp{Zhang:2004}] \label{Thm2}
For a given $\nu>0$, let $P({\cal M}_{\nu,\alpha_i,\sigma^2_i})$,  $i=0, 1$, be two zero mean Gaussian  measures. For any
bounded infinite set $D\subset \R^d$, $d=1, 2, 3$,
$P({\cal M}_{\nu,\alpha_0,\sigma^2_0}) \equiv P( {\cal M}_{\nu,\alpha_1,\sigma^2_1})$ on the paths of $Z(\ss), \ss \in D$, if and only if
\begin{equation}\label{condmat}
\sigma_0^2 / \alpha_0^{2\nu}=\sigma_1^2/ \alpha_1^{2\nu}.
\end{equation}
\end{theo}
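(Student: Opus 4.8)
The plan is to treat the two implications separately, using Stein's spectral criterion \eqref{spectralfinite2} for the sufficiency direction and the equivalent/orthogonal dichotomy together with a microergodicity argument for necessity.

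For sufficiency, I would assume \eqref{condmat} and first verify the tail hypothesis of Stein's criterion. From \eqref{stein1} the Mat\'ern spectral density satisfies
\[
\widehat{{\cal M}}_{\nu,\alpha_0,\sigma_0^2}(z)\,z^{2\nu+d}\longrightarrow \frac{\Gamma(\nu+d/2)}{\pi^{d/2}\Gamma(\nu)}\,\sigma_0^2\,\alpha_0^{-2\nu},\qquad z\to\infty,
\]
so it is bounded away from $0$ and $\infty$ with $a=2\nu+d>0$. Next, again by \eqref{stein1}, the ratio of the two spectral densities is
\[
\frac{\widehat{{\cal M}}_{\nu,\alpha_1,\sigma_1^2}(z)}{\widehat{{\cal M}}_{\nu,\alpha_0,\sigma_0^2}(z)}=\frac{\sigma_1^2\,\alpha_1^d}{\sigma_0^2\,\alpha_0^d}\left(\frac{1+\alpha_0^2z^2}{1+\alpha_1^2z^2}\right)^{\nu+d/2}.
\]
Factoring $\alpha_i^2 z^2$ out of each binomial produces the leading constant $\sigma_1^2\alpha_0^{2\nu}/(\sigma_0^2\alpha_1^{2\nu})$, which equals $1$ exactly under \eqref{condmat}. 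A Taylor expansion of $(1+x)^{\nu+d/2}$ in $x=1/(\alpha_i^2 z^2)$ then shows the ratio equals $1+O(z^{-2})$ as $z\to\infty$. Hence the integrand in \eqref{spectralfinite2} is $z^{d-1}O(z^{-4})=O(z^{d-5})$, which is integrable at infinity precisely when $d-5<-1$, i.e. for $d=1,2,3$; this is exactly where the dimension restriction enters. Stein's criterion then delivers $P({\cal M}_{\nu,\alpha_0,\sigma_0^2})\equiv P({\cal M}_{\nu,\alpha_1,\sigma_1^2})$.

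For necessity I would argue by contraposition, exploiting that two Gaussian measures are either equivalent or orthogonal. Suppose \eqref{condmat} fails. By the sufficiency direction just proved, $P({\cal M}_{\nu,\alpha_0,\sigma_0^2})$ is equivalent to $P({\cal M}_{\nu,\alpha_1,\tilde\sigma^2})$ with $\tilde\sigma^2:=\sigma_0^2(\alpha_1/\alpha_0)^{2\nu}$, since this latter pair shares the scale $\alpha_1$ and satisfies the microergodic identity by construction. As $\tilde\sigma^2\neq\sigma_1^2$, it then suffices to show that two Mat\'ern measures with common scale $\alpha_1$ but distinct variances are orthogonal; chaining this with the equivalence forces orthogonality of the original pair. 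In the equal-scale case the covariances are proportional, and a standard quadratic-variation argument applies: a functional built from increments over the dense sampling set converges almost surely to a value proportional to the variance, so distinct variances render the two limiting events disjoint and the measures orthogonal.

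I expect the necessity direction to be the main obstacle. Stein's criterion \eqref{spectralfinite2} is only sufficient for equivalence, so the divergence of its integral when \eqref{condmat} fails (where the spectral ratio tends to a constant other than $1$ and the integrand is $\asymp z^{d-1}$) does not by itself certify orthogonality. The genuine content lies in exhibiting a microergodic functional—equivalently, in the consistent estimability of $\sigma^2/\alpha^{2\nu}$—to turn the dichotomy into orthogonality. I would therefore isolate this as the crux and carry out the reduction to the proportional-covariance, equal-scale case so as to keep the quadratic-variation computation as transparent as possible.
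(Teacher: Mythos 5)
Your proposal is correct and follows essentially the same route as the source: the paper states this result as a citation of \cite{Zhang:2004} without reproving it, and Zhang's argument is exactly your two-part scheme --- Stein's criterion \eqref{spectralfinite2} with the $1+\mathcal{O}(z^{-2})$ spectral ratio giving an $\mathcal{O}(z^{d-5})$ integrand (hence $d\le 3$) for sufficiency, and for necessity the reduction via an intermediate measure with matched microergodic parameter to the proportional-covariance case, where orthogonality follows from the almost-sure convergence of $\bZ_n^{\prime}R_n^{-1}\bZ_n/n$ to the variance. This is also precisely the template the paper itself adapts in its proof of Theorem~\ref{W_vs_W} for the GW class.
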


The first relevant result of this paper concerns the characterization of the equivalence of two zero mean Gaussian measures under GW functions. The crux of the proof is the arguments in Equation~\eqref{spectralfinite2}, coupled with the asymptotic expansion of
the  spectral density as in Theorem~\ref{the3} and~\ref{the356}.

\begin{theo} \label{W_vs_W}
 For a given $\kappa \ge 0$, let $P( \varphiall[i])$,  $i=0, 1$, be two zero mean Gaussian  measures
and  let $\mu > \lambda+d/2$, with $\lambda$ as defined through Equation~\eqref{lawea}.
 For any
bounded infinite set $D\subset \R^d$, $d=1, 2, 3$,
$P( \varphiall[0]) \equiv P(\varphiall[1])$ on the paths of $Z(\ss), \ss \in D$ if and only if
\begin{equation} \label{condition1_iff}
\sigma_0^2/ \beta_0^{2 \kappa+1} = \sigma_1^2/ \beta_1^{2 \kappa+1}.
\end{equation} \end{theo}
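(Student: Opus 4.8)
The plan is to verify the Stein condition \eqref{spectralfinite2} using the asymptotic expansions of the GW spectral density supplied by Theorems~\ref{the3} and~\ref{the356}. The two Gaussian measures $P(\varphiall[0])$ and $P(\varphiall[1])$ are either equivalent or orthogonal, so it suffices to show that \eqref{condition1_iff} is exactly the dividing line. First I would record the leading-order behavior: by Point~\ref{3.3} (or Point~\ref{4.3} when $\kappa=0$), $\hatvarphiall[i](z) \asymp z^{-2\lambda}$ with $2\lambda = d+1+2\kappa$, so the decay exponent $a = 2\lambda$ is the \emph{same} for both measures regardless of the parameters, and in particular $\widehat{\rho}_0(z) z^{2\lambda}$ is bounded away from $0$ and $\infty$. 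Thus the first hypothesis of Stein's criterion is automatically satisfied, and the entire question reduces to the convergence of the integral in \eqref{spectralfinite2}.

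The key step is to extract the precise leading constant of the $z^{-2\lambda}$ term from Point~\ref{3.2}. Writing the dominant part of $\hatvarphiall[i](z)$ as
\begin{equation*}
\hatvarphiall[i](z) = \sigma_i^2 L^{\boldsymbol{\varsigma}} \beta_i^{d} c_3^{\boldsymbol{\varsigma}} (z\beta_i)^{-2\lambda}\bigl\{1+\mathcal{O}(z^{-2})\bigr\} + \sigma_i^2 L^{\boldsymbol{\varsigma}}\beta_i^d c_4^{\boldsymbol{\varsigma}}(z\beta_i)^{-(\mu+\lambda)}\bigl\{\cos(z\beta_i - c_5^{\boldsymbol{\varsigma}}) + \mathcal{O}(z^{-1})\bigr\},
\end{equation*}
I would note that $L^{\boldsymbol{\varsigma}}, c_3^{\boldsymbol{\varsigma}}, c_4^{\boldsymbol{\varsigma}}, c_5^{\boldsymbol{\varsigma}}$ depend only on $\boldsymbol{\varsigma} = (\mu,\kappa,d)'$ and are therefore \emph{identical} for $i=0,1$. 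The leading coefficient of the principal term is $A_i := \sigma_i^2 L^{\boldsymbol{\varsigma}} c_3^{\boldsymbol{\varsigma}} \beta_i^{d-2\lambda} = \sigma_i^2 L^{\boldsymbol{\varsigma}} c_3^{\boldsymbol{\varsigma}} \beta_i^{-(2\kappa+1)}$, since $d - 2\lambda = -(2\kappa+1)$. Hence the ratio in \eqref{spectralfinite2} satisfies, as $z\to\infty$,
\begin{equation*}
\frac{\widehat{\rho}_1(z)-\widehat{\rho}_0(z)}{\widehat{\rho}_0(z)} = \frac{A_1 - A_0}{A_0} + \mathcal{O}(z^{-2}) + \mathcal{O}\bigl(z^{-(\mu-\lambda)}\cos(z\beta_i - c_5^{\boldsymbol{\varsigma}})\bigr),
\end{equation*}
where the last correction comes from the oscillatory secondary term divided by the principal term. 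The squared ratio integrated against $z^{d-1}$ converges if and only if the constant term $A_1 - A_0$ vanishes, because a nonzero constant makes $\int^\infty z^{d-1}\,\d z$ diverge. Now $A_1 = A_0$ is precisely $\sigma_0^2/\beta_0^{2\kappa+1} = \sigma_1^2/\beta_1^{2\kappa+1}$, establishing \eqref{condition1_iff}.

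It remains to confirm that under \eqref{condition1_iff} the \emph{remaining} terms give a convergent integral. The $\mathcal{O}(z^{-2})$ correction from the principal term contributes $z^{d-1}\cdot z^{-4}$, integrable for $d=1,2,3$; the oscillatory secondary term, of relative order $z^{\mu+\lambda}\beta_i^{-(\mu+\lambda)}/(z^{2\lambda}) \sim z^{-(\mu-\lambda)}$ after dividing by the principal term, contributes $z^{d-1}\cdot z^{-2(\mu-\lambda)}$, which is integrable exactly when $2(\mu-\lambda) > d$, i.e.\ $\mu > \lambda + d/2$ — this is where the hypothesis on $\mu$ enters and is used. This is the step I expect to require the most care: one must check that, even when $\beta_0 \ne \beta_1$ so the two cosine phases differ, the \emph{cross} and squared oscillatory contributions remain dominated by $z^{d-1-2(\mu-\lambda)}$ and thus integrable under $\mu > \lambda + d/2$. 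The main obstacle is therefore bookkeeping the secondary oscillatory term correctly rather than any deep difficulty; once the constant term is forced to zero by \eqref{condition1_iff}, the convergence of \eqref{spectralfinite2} follows from the stated decay rates, and Stein's criterion yields equivalence, while failure of \eqref{condition1_iff} forces a divergent integral and hence orthogonality.
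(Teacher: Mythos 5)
The sufficiency half of your argument is essentially the paper's proof: the same decomposition of the ratio into a constant term proportional to $\sigma_1^2\beta_1^{-(2\kappa+1)}-\sigma_0^2\beta_0^{-(2\kappa+1)}$ plus an $\mathcal{O}(z^{-2})$ correction plus an oscillatory term of relative order $z^{\lambda-\mu}$, with the same bookkeeping showing that the squared terms contribute $z^{d-1-4}$ and $z^{d-1-2(\mu-\lambda)}$, hence integrability for $d=1,2,3$ under $\mu>\lambda+d/2$. That part is fine.

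The necessity half has a genuine gap. You argue: if $A_1\neq A_0$ then the integral in \eqref{spectralfinite2} diverges, ``and hence orthogonality.'' But \eqref{spectralfinite2} is only a \emph{sufficient} condition for equivalence; its failure does not imply orthogonality, and the equivalence-or-orthogonality dichotomy does not convert a failed sufficient condition into a proof of orthogonality. The paper closes this gap the way \cite{Zhang:2004} does for the Mat\'ern class: assuming $\sigma_1^2\beta_1^{-(1+2\kappa)}\neq\sigma_0^2\beta_0^{-(1+2\kappa)}$, one introduces the intermediate variance $\sigma_2^2=\sigma_0^2(\beta_0/\beta_1)^{-(1+2\kappa)}$, so that $P(\varphiall[0])\equiv P(\varphiall[1][2])$ by the already-proved sufficiency, and then shows \emph{directly} that $P(\varphiall[1][2])\perp P(\varphiall[1])$ --- two GW measures with the same correlation structure but different variances are orthogonal (e.g., because a suitable normalized quadratic form converges a.s.\ to the two distinct variance values under the respective measures, producing an event of probability one under one measure and zero under the other). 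Transitivity of equivalence then forces $P(\varphiall[0])\perp P(\varphiall[1])$. Without some argument of this kind (or an appeal to a necessary-and-sufficient spectral criterion, which the paper does not invoke), your necessity direction is unproved.
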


\begin{proof}
We  first consider  the case $\kappa>0$.
Let us start with the sufficient part of the assertion.
From  Theorem~\ref{the3} (Point~\ref{3.3}), there exist two positive constants  $c_i$ and $C_i$ such that

\begin{equation*}
c_i\leq z^{2\lambda}
\hatvarphiall[i](z)
\leq C_i,\quad i=0,1.
 \end{equation*}
 In order to prove the sufficient part, we need to find conditions such that,  for some positive and finite $c$,
\begin{equation}\label{eq:6}
\int_{c}^{\infty}z^{d-1} \bigg(
\frac{\hatvarphiall[1](z)-\hatvarphiall[0](z)}{\hatvarphiall[0](z)}
 \biggr)^{2} {\rm d} z<\infty.
 \end{equation}
 
We proceed by direct construction, and, using Theorem~\ref{the3} (Points~\ref{3.1} and~\ref{3.2}), we find that, as $z\to\infty$,
\begin{align*}
\Bigg |&\frac{\hatvarphiall[1](z)-\hatvarphiall[0](z)}{\hatvarphiall[0](z)}\Bigg | \leq L^{\boldsymbol{\varsigma}}{c_0^{-1}} z^{2\lambda}
\Bigg |\sigma_{1}^{2}\beta_{1}^{d}\Big [c_{3}^{\boldsymbol{\varsigma}}(\beta_{1}z)^{-2\lambda}\big\{1+\mathcal{O}(z^{-2})\big\}\\
&\quad + c_{4}^{\boldsymbol{\varsigma}}(z\beta_{1})^{-(\mu+\lambda)}\big\{\cos(\beta_{1}z-c_{5}^{\boldsymbol{\varsigma}})+\mathcal{O}(z^{-1})\big\}\Big ]\\
&\quad - \sigma_{0}^{2}\beta_{0}^{d}\Big [c_{3}^{\boldsymbol{\varsigma}}(\beta_{0}z)^{-2 \lambda}\big\{1+\mathcal{O}(z^{-2})\big\} \\
&\quad + c_{4}^{\boldsymbol{\varsigma}}(z\beta_{0})^{-(\mu+\lambda)}\big\{\cos(\beta_{0}z-c_{5}^{\boldsymbol{\varsigma}})+\mathcal{O}(z^{-1})\big\}\Big ]\Bigg |\\
&\leq L^{\boldsymbol{\varsigma} }{c_0^{-1}}\Bigg|c_{3}^{\boldsymbol{\varsigma}}\Big[\sigma_{1}^{2}\beta_{1}^{-(1+2\kappa)}-\sigma_{0}^{2}\beta_{0}^{-(1+2\kappa)}\Big]+\mathcal{O}(z^{-2})\\
&\quad +c_{4}^{\boldsymbol{\varsigma}}z^{\lambda-\mu}\Big[\sigma_{1}^{2}\beta_{1}^{\tlambda}\cos(\beta_{1}z-c_{5}^{\boldsymbol{\varsigma}})
-\sigma_{0}^{2}\beta_{0}^{\tlambda}\cos(\beta_{0}z-c_{5}^{\boldsymbol{\varsigma}})\Big]\\
&\quad +c_{4}^{\boldsymbol{\varsigma}}z^{\lambda-\mu}\mathcal{O}(z^{-1})\big\{\sigma_{1}^{2}\beta_{1}^{\tlambda}-\sigma_{0}^{2}\beta_{0}^{\tlambda}\big\}\Bigg|,
\end{align*}
where  $\tlambda=d-(\mu+\lambda)$. 
Let us now write
\begin{align*}
A(z)&=c_{3}^{\boldsymbol{\varsigma}}[\sigma_{1}^{2}\beta_{1}^{-(1+2\kappa)}-\sigma_{0}^{2}\beta_{0}^{-(1+2\kappa)}]+\mathcal{O}(z^{-2}),\\
B(z)&=c_{4}^{\boldsymbol{\varsigma}}z^{\lambda-\mu}[\sigma_{1}^{2}\beta_{1}^{\tlambda}\cos(\beta_{1}z-c_{5}^{\boldsymbol{\varsigma}})-\sigma_{0}^{2}\beta_{0}^{\tlambda}\cos(\beta_{0}z-c_{5}^{\boldsymbol{\varsigma}})], \text{ and }\\
D(z)&=c_{4}^{\boldsymbol{\varsigma}}z^{\lambda-\mu}\mathcal{O}(z^{-1})\big\{\sigma_{1}^{2}\beta_{1}^{\tlambda}-\sigma_{0}^{2}\beta_{0}^{\tlambda}\big\}.
\end{align*}
 Then, a sufficient condition  for (\ref{eq:6}) is the following:
\begin{equation}\label{eq:6_1}
(L^{\boldsymbol{\varsigma}}/c_0)^2 \int_{c}^{\infty}z^{d-1}\big(A(z)+B(z)+D(z)\big)^{2}{\rm d} z<\infty.
 \end{equation}
Note  that $A(z)$ is of order $\mathcal{O}(z^{-2})$ under Condition~\eqref{condition1_iff}. 
 We claim that~\eqref{eq:6_1} is satisfied if  $\sigma_{1}^{2}\beta_{1}^{-(1+2\kappa)}=\sigma_{0}^{2}\beta_{0}^{-(1+2\kappa)}$ for  $\mu>\lambda+d/2$,  $d=1,2,3$. 

In fact,  we have, for $z\to\infty$,
\begin{equation*}
\begin{aligned}
 |B(z)|&\leq c_{4}^{\boldsymbol{\varsigma}}z^{\lambda-\mu}[\sigma_{1}^{2}\beta_{1}^{\tlambda}+\sigma_{0}^{2}\beta_{0}^{\tlambda}]\leq c_{6}z^{\lambda-\mu}, \qquad
\end{aligned}
\end{equation*}
and
\begin{equation*}
\begin{aligned}
 |D(z)|&\leq c_{4}^{\boldsymbol{\varsigma}}z^{\lambda-\mu}\mathcal{O}(z^{-1})\big\{\sigma_{1}^{2}\beta_{1}^{\tlambda}+\sigma_{0}^{2}\beta_{0}^{\tlambda}\big\}\\
 &\leq c_{7}c_{4}^{\boldsymbol{\varsigma}}z^{\lambda-\mu-1}\big\{\sigma_{1}^{2}\beta_{1}^{\tlambda}+\sigma_{0}^{2}\beta_{0}^{\tlambda}\big\}
 \leq c_{8}z^{\lambda-\mu-1}
\end{aligned}
\end{equation*}
with $c_6$, $c_7$ and $c_8$  being positive and finite constants. 
Expanding (\ref{eq:6_1})
we notice that the dominant terms are $A^2$ and $B^2$, independently on the cross products. These are respectively of the order ${\cal O}(z^{-4})$ 
and ${\cal O}(z^{2(\lambda-\mu)})$. This in turn implies that the integral~\eqref{eq:6_1} is finite   if $\sigma_{1}^{2}\beta_{1}^{-(1+2\kappa)}=\sigma_{0}^{2}\beta_{0}^{-(1+2\kappa)}$, for $\mu>\lambda+d/2$ and $d=1,2,3$ and
this implies that ~\eqref{eq:6} is satisfied under the same conditions. 
The sufficient part of our claim is thus proved.

The necessary   part follows the proof of \cite{Zhang:2004}.  For $\mu>\lambda+d/2$ and $d=1,2,3$, we suppose $\sigma_{1}^{2}\beta_{1}^{-(1+2\kappa)}\neq\sigma_{0}^{2}\beta_{0}^{-(1+2\kappa)}$
  and let $\sigma_{2}^{2}=\sigma_{0}^{2}(\beta_{0}/\beta_{1})^{-(1+2\kappa)}$.
Then $\varphiall[0]$ and $\varphiall[1][2]$ define two equivalent Gaussian measures. We need to show  that
$\varphiall[1][2]$ and $\varphiall[1]$ define two orthogonal Gaussian measures.
The rest of the proof follows the same arguments in \cite{Zhang:2004}.

We omit the proof of the special case  $\kappa=0$, since is similar to the case $\kappa>0$,  but using the arguments in Theorem~\ref{the356}.
\end{proof}

An immediate consequence of Theorem~\ref{W_vs_W} is that for fixed $\kappa$ and $\mu$, the $\beta$ and $\sigma^2$ parameters cannot be estimated consistently  \citep{Zhang:2004}. Instead, the microergodic parameter $\sigma^{2}\beta^{-(1+2\kappa)}$
is consistently estimable. In Section~\ref{4}, we establish the asymptotic properties of ML estimation associated to the microergodic parameter.

The next result depicts an interesting scenario in which a GW and  Mat{\'e}rn model are considered
and gives sufficient conditions for the compatibility of these two covariance models.
We offer a constructive proof, the crux of the argument being  again  Equation~\eqref{spectralfinite2}.
We treat  the cases $\kappa>0$ and $\kappa=0$ separately.

\begin{theo}\label{ThmX}
For given $\nu>1/2$ and $\kappa>0$,   let $P({\cal M}_{\nu,\alpha,\sigma_0^2})$ and $P( \varphiall[][1])$
  be two zero mean Gaussian measures. If $\nu=\kappa+1/2$, $\mu > \lambda+d/2$, with $\lambda$ as defined through Equation~\eqref{lawea}, and
\begin{equation}\label{cafu}
\sigma_{0}^{2}\alpha^{-2\nu}=C_{\nu,\kappa,\mu}\sigma_1^2\beta^{-(1+2\kappa)},
\end{equation}. 
 where $C_{\nu,\kappa,\mu}= \frac{ \mu2^{-d}\Gamma(\nu)\Gamma(\kappa)\Gamma(2\kappa+d)}{\Gamma(\nu+d/2)\Gamma(\kappa+d/2)B(2\kappa,\mu+1)}$,
then for any
bounded infinite set $D\subset \R^d$, $d=1, 2, 3$, $P({\cal M}_{\nu,\alpha,\sigma^2_0})
  \equiv P(\varphiall[][1])$ on the paths of $Z(\ss), \ss \in D$.
\end{theo}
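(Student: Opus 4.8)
The plan is to verify Stein's sufficient condition~\eqref{spectralfinite2}, taking the Mat\'ern measure as the reference. Concretely, I would set $\widehat{\rho}_0=\widehat{{\cal M}}_{\nu,\alpha,\sigma_0^2}$ and $\widehat{\rho}_1=\hatvarphiall[][1]$. Expanding $(1+\alpha^2z^2)^{-(\nu+d/2)}$ in~\eqref{stein1} for large $z$ yields
\begin{equation*}
\widehat{{\cal M}}_{\nu,\alpha,\sigma_0^2}(z)=\frac{\Gamma(\nu+d/2)}{\pi^{d/2}\Gamma(\nu)}\,\sigma_0^2\,\alpha^{-2\nu}\,z^{-(2\nu+d)}\bigl(1+{\cal O}(z^{-2})\bigr),\qquad z\to\infty,
\end{equation*}
so that $\widehat{\rho}_0(z)\,z^{2\nu+d}$ is bounded away from $0$ and $\infty$. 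Since the hypothesis $\nu=\kappa+1/2$ forces $2\nu+d=2\lambda$, this verifies the requirement on $\widehat{\rho}_0$ with $a=2\lambda$ and, by Theorem~\ref{the3} (Point~\ref{3.3}), matches the decay $z^{-2\lambda}$ of $\hatvarphiall[][1]$.

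I would then form $\widehat{\rho}_1-\widehat{\rho}_0$ using the refined expansion of Theorem~\ref{the3} (Point~\ref{3.2}) for the GW part and the display above for the Mat\'ern part. Both leading contributions are of order $z^{-2\lambda}$: for the GW it is $\sigma_1^2 L^{\boldsymbol{\varsigma}}c_3^{\boldsymbol{\varsigma}}\beta^{d-2\lambda}z^{-2\lambda}$, and since $d-2\lambda=-(1+2\kappa)$ this reads $\sigma_1^2 L^{\boldsymbol{\varsigma}}c_3^{\boldsymbol{\varsigma}}\beta^{-(1+2\kappa)}z^{-2\lambda}$. The key algebraic step is to check that equating this coefficient with $\frac{\Gamma(\nu+d/2)}{\pi^{d/2}\Gamma(\nu)}\sigma_0^2\alpha^{-2\nu}$ is precisely Condition~\eqref{cafu}: inserting the explicit $L^{\boldsymbol{\varsigma}}$, $K^{\boldsymbol{\varsigma}}$ and $c_3^{\boldsymbol{\varsigma}}$ from Theorem~\ref{the3}, the powers of $2$ collapse to $2^{-d}$, the factors $\pi^{\pm d/2}$ cancel and $\Gamma(\mu+1)/\Gamma(\mu)=\mu$, reproducing exactly the constant $C_{\nu,\kappa,\mu}$. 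Under~\eqref{cafu} the leading $z^{-2\lambda}$ terms therefore cancel in the numerator.

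After this cancellation, what survives in $\widehat{\rho}_1-\widehat{\rho}_0$ is the ${\cal O}(z^{-2})$ correction to the two leading terms, of order $z^{-2\lambda-2}$, together with the oscillatory GW term of order $z^{-(\mu+\lambda)}$ (the Mat\'ern density carries no oscillatory part). Dividing by $\widehat{\rho}_0(z)\asymp z^{-2\lambda}$ gives a ratio of order ${\cal O}(z^{-2})+{\cal O}(z^{\lambda-\mu})$, so the integrand in~\eqref{spectralfinite2} is of order ${\cal O}(z^{d-5})+{\cal O}(z^{d-1+2(\lambda-\mu)})$, with the cross term controlled by these two. Integrability at infinity then demands $d-5<-1$ and $d-1+2(\lambda-\mu)<-1$, that is $d\le 3$ and $\mu>\lambda+d/2$, exactly the standing hypotheses; this establishes~\eqref{spectralfinite2} and hence equivalence.

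I expect the main obstacle to be the bookkeeping in the cancellation step: one must track the $\beta^{d-2\lambda}$ normalization and the precise gamma and power factors hidden in $L^{\boldsymbol{\varsigma}}$ and $K^{\boldsymbol{\varsigma}}$ to confirm that matching the two leading coefficients reproduces $C_{\nu,\kappa,\mu}$ verbatim, rather than a constant off by a spurious power of $2$, $\pi$, or a gamma ratio. The role of $\nu=\kappa+1/2$ is decisive here, since without the equality $2\nu+d=2\lambda$ of decay exponents the two leading terms could not cancel and the ratio would fail to be bounded.
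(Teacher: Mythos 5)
Your proposal is correct and follows essentially the same route as the paper: both verify Stein's integral condition~\eqref{spectralfinite2} with the Mat\'ern density as reference, use the large-$z$ expansions of \eqref{stein1} and Theorem~\ref{the3} (Points~\ref{3.1} and~\ref{3.2}) to cancel the leading $z^{-2\lambda}$ coefficients (the paper phrases this as the condition $w_1=1$, which unpacks to exactly your coefficient-matching identity and hence to $C_{\nu,\kappa,\mu}$), and then bound the surviving ${\cal O}(z^{-2})$ and oscillatory ${\cal O}(z^{\lambda-\mu})$ relative remainders to obtain the same constraints $d\le 3$ and $\mu>\lambda+d/2$. The only difference is cosmetic: you subtract the densities and divide afterwards, while the paper manipulates the ratio minus one and enumerates the resulting square and cross terms explicitly.
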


\begin{proof}

In order to prove Theorem 5, we need to find conditions such that  for some positive and finite $c$,
\begin{equation}\label{eq:99}
\int_{c}^{\infty}z^{d-1} \bigg(
\frac{\widehat{\varphi}_{\nu,\kappa,\beta,\sigma^2_1}(z)-\widehat{{\cal M}}_{\nu,\alpha,\sigma^2_0}(z)}{\widehat{{\cal M}}_{\nu,\alpha,\sigma^2_0}(z)}
 \biggr)^{2} {\rm d} z<\infty.
\end{equation}

It is known that $\widehat{{\cal M}}_{\nu,\alpha,\sigma^2_0}(z)z^a$ is bounded away from 0 and $\infty$ as $z \to \infty$
for some $a>0$ \citep{Zhang:2004}.
Using~\eqref{stein1} and  Theorem~\ref{the3} (Points~\ref{3.1} and~\ref{3.2}), we have, as $z\to\infty$,
\begin{equation*}
\begin{aligned}
\Bigg|&\frac{\hatvarphiall[][1](z)-\widehat{{\cal M}}_{\nu,\alpha,\sigma^2_0}(z)}{\widehat{{\cal M}}_{\nu,\alpha,\sigma^2_0}(z)}\Bigg|\\
&= \Bigg|\frac{\sigma_{1}^{2}\beta^{d}\Gamma(\nu)L^{\boldsymbol{\varsigma}}}{\Gamma(\nu+d/2)\sigma_{0}^{2}\alpha^{-2\nu}\pi^{-\frac{d}{2}}}\Big[c_{3}^{\boldsymbol{\varsigma}}(\beta z)^{-2\lambda}\big\{1+\mathcal{O}(z^{-2})\big\}\\
&\quad +c_{4}^{\boldsymbol{\varsigma}}(z\beta)^{-(\mu+\lambda))}\big\{\cos(\beta z-c_{5}^{\boldsymbol{\varsigma}})+\mathcal{O}(z^{-1})\big\}\Big](\alpha^{-2}+z^{2})^{\nu+\frac{d}{2}}-1\Bigg|\\
&=\Bigg|\frac{\sigma_{1}^{2}\beta^{d}\Gamma(\nu)L^{\boldsymbol{\varsigma}}}{\Gamma(\nu+d/2)\sigma_{0}^{2}\alpha^{-2\nu}\pi^{-\frac{d}{2}}}\Big[c_{3}^{\boldsymbol{\varsigma}}(\beta z)^{-2\lambda}\big\{1+\mathcal{O}(z^{-2})\big\}\\
&\quad+ c_{4}^{\boldsymbol{\varsigma}}(z\beta)^{-(\mu+\lambda)}\big\{\cos(\beta z-c_{5}^{\boldsymbol{\varsigma}})+\mathcal{O}(z^{-1})\big\}\Big]z^{2\nu+d}(({\alpha z})^{-2}+1)^{\nu+\frac{d}{2}}-1\Bigg|\\
&=\Bigg| w_{1}  z^{-2\lambda}\big\{1+\mathcal{O}(z^{-2})\big\}z^{2\nu+d} \Big[1 + (\nu+d/2)(\alpha z)^{-2}+\mathcal{O}(z^{-2})\Big]\\&\quad+
 w_{2}   z^{-(\mu+\lambda)}z^{2\nu+d} \Big[1+(\nu+d/2)(\alpha z)^{-2}+\mathcal{O}(z^{-2})\Big]
\big\{\cos(\beta z-c_{5}^{\boldsymbol{\varsigma}})+\mathcal{O}(z^{-1})\big\}-1\Bigg|,
\end{aligned}
\end{equation*}
where   $w_{1}=\frac{L^{\boldsymbol{\varsigma}}\sigma_{1}^{2}\beta^{-( 1+2\kappa)}\Gamma(\nu)c_{3}^{\boldsymbol{\varsigma}}}{\Gamma(\nu+d/2)\sigma_{0}^{2}\alpha^{-2\nu}\pi^{-{d}/{2}}}$,
$w_{2}=w_{1}c_{4}^{\boldsymbol{\varsigma}}\beta^{\lambda-\mu}/c_{3}^{\boldsymbol{\varsigma}}$.
Since $z^{2\nu+d}\big[(\nu+d/2)(\alpha z)^{-2}+\mathcal{O}(z^{-2})\big]=\mathcal{O}(z^{2\nu+d-2})$, we have
\begin{align*}
&\int_{c}^{\infty}z^{d-1}\Bigg|\frac{\widehat{\varphi}_{\mu,\kappa,\beta,\sigma^2_1}(z)-\widehat{{\cal M}}_{\nu,\alpha,\sigma^2_0}(z)}{\widehat{{\cal M}}_{\nu,\alpha,\sigma^2_0}(z)}\Bigg|^2 {\rm d} z \\
 &= \int_c^{\infty}z^{d-1}\Bigg|w_{1}z^{-2\lambda}\mathcal{O}(z^{2\nu+d-2})+\big\{w_{1}z^{2\nu-( 1+2\kappa)}-1\big\}+w_{1}z^{-2\lambda}\\
 &\quad\times \big\{ \mathcal{O}(z^{2\nu+d-2})+ \mathcal{O}(z^{2\nu+d-4})\big\}+w_{2}z^{-(\mu+\lambda)}\big\{\mathcal{O}(z^{2\nu+d-2})+z^{2\nu+d}\big\}\big\{\cos(\beta z-c_{5}^{\boldsymbol{\varsigma}})\\
 &\quad+\mathcal{O}(z^{-1})\big\}\Bigg|^2 {\rm d }z.
\end{align*}
For assessing the last integral, the following is relevant:
\begin{itemize}
  \item[(i)] $w_{1}z^{2\nu-( 1+2\kappa)}-1=0$ if $\nu=\kappa+1/2$ and  $w_1=1$.
  \item[(ii)] $\int_{c}^{\infty}z^{d-1}\big(w_{1}z^{-2\lambda}\mathcal{O}(z^{2\nu+d-2})\big)^2 {\rm d} z<\infty$ if $d=1,2,3$ and $\nu=\kappa+1/2$.
  \item[(iii)] $\int_{c}^{\infty}z^{d-1}\Big(w_{1}z^{-2\lambda}\big\{ \mathcal{O}(z^{2\nu+d-2})+ \mathcal{O}(z^{2\nu+d-4})\big\}\Big)^2  {\rm d} z<\infty$ if $d=1,2,3$ and $\nu=\kappa+1/2$.
  \item[(iv)] $\int_{c}^{\infty}z^{d-1}\Big(w_{2}z^{-(\mu+\lambda)}\big\{\mathcal{O}(z^{2\nu+d-2})+z^{2\nu+d}\big\}\big\{\cos(\beta z-c_{5}^{\boldsymbol{\varsigma}})+\mathcal{O}(z^{-1})\big\}\Big)^2  {\rm d} z<\infty$ if $\mu>\lambda+d/2$ and $\nu=\kappa+1/2$.
  \item[(v)] $\int_{c}^{\infty}z^{d-1}\big(w_{1}z^{-2\lambda}\mathcal{O}(z^{2\nu+d-2})\big)\big(w_{1}z^{-2\lambda}\mathcal{O}(z^{-2})(\mathcal{O}(z^{2\nu+d-2})+z^{2\nu+d})\big)  {\rm d} z<\infty$ if $d=1,2,3$ and $\nu=\kappa+1/2$.
  \item[(vi)] $\int_{c}^{\infty}z^{d-1}\big(w_{1}z^{-2\lambda}\mathcal{O}(z^{2\nu+d-2})\big)\big(w_{2}z^{-(\mu+\lambda)}\big\{\mathcal{O}(z^{2\nu+d-2})+z^{2\nu+d}\big\}\big\{\cos(\beta z-c_{5}^{\boldsymbol{\varsigma}})+\mathcal{O}(z^{-1})\big\}\big)  {\rm d} z<\infty$ if $\mu>\lambda+d-2$ and $\nu=\kappa+1/2$.
   \item[(vii)]   $\int_{c}^{\infty}z^{d-1}\big(w_{1}z^{-2\lambda}\mathcal{O}(z^{-2})\big\{\mathcal{O}(z^{2\nu+d-2})+z^{2\nu+d}\big\}\big)\big(w_{2}z^{-(\mu+\lambda)}\big\{\mathcal{O}(z^{2\nu+d-2})+z^{2\nu+d}\big\}$$\times$$\big\{\cos(\beta z-c_{5}^{\boldsymbol{\varsigma}})+\mathcal{O}(z^{-1})\big\}\big)   {\rm d} z<\infty$ if $\mu>\lambda+d-2$ and $\nu=\kappa+1/2$.
\end{itemize}
This allows us to conclude that, for a given $\kappa>0$, if $w_1=1$, $\nu=\kappa+1/2$,  $\mu>\lambda+d/2$  and $d=1,2,3$, then~\eqref{eq:99} holds  and thus $P({\cal M}_{\nu,\alpha,\sigma^2_0}) \equiv  P(\varphiall[][1])$.

Condition $w_1=1$ is equivalent to
\begin{align*}
L^{\boldsymbol{\varsigma}}c_{3}^{\boldsymbol{\varsigma}}\sigma_{1}^{2}\beta^{-(1+2\kappa)}=\pi^{-d/2}\Gamma(\nu+d/2)\Gamma(\nu)^{-1}\sigma_{0}^{2}\alpha^{-2\nu},
\end{align*}
and
from the  definition of   $c_{3}^{\boldsymbol{\varsigma}}$ and $L^{\boldsymbol{\varsigma}}$,  the previous  condition can be 
rewritten as
  $\sigma_{0}^{2}\alpha^{-2\nu}=C_{\nu,\kappa,\mu}\sigma_{1}^{2}\beta^{-(1+2\kappa)}$.
%
\end{proof}
\begin{theo}\label{ThmX0}
Let $P({\cal M}_{1/2,\alpha,\sigma_0^2})$ and $P(\varphi_{\mu,0,\beta,\sigma^2_1})$
  be two zero mean Gaussian measures. If  $\mu > d+1/2$ and
\begin{equation}\label{coutinho}
\sigma_{0}^{2}\alpha^{-2\nu}=R_{\mu}\sigma_{1}^{2}\beta^{-1},
\end{equation}
 where $R_{\mu}= \frac{\mu 2^{1-d}\Gamma(1/2)\Gamma(d)}{\Gamma(1/2+d/2)\Gamma(d/2)}$,
then for any
bounded infinite set $D\subset \R^d$, $d=1, 2, 3$, $P({\cal M}_{1/2,\alpha,\sigma^2_0})
  \equiv P(\varphi_{\mu,0,\beta,\sigma^2_1})$ on the paths of $Z(\ss), \ss \in D$.
\end{theo}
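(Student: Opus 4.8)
The plan is to verify Stein's sufficient condition~\eqref{spectralfinite2} for the pair $\widehat{{\cal M}}_{1/2,\alpha,\sigma_0^2}$ and $\widehat{\varphi}_{\mu,0,\beta,\sigma_1^2}$, following the proof of Theorem~\ref{ThmX} almost verbatim but specialized to $\nu=1/2$ (equivalently $\kappa=0$), so that $\lambda=(d+1)/2$ and $1+2\kappa=1$, and with the spectral expansions of Theorem~\ref{the356} replacing those of Theorem~\ref{the3}. Concretely, I would establish that for some finite $c>0$,
\begin{equation*}
\int_{c}^{\infty}z^{d-1}\left(\frac{\widehat{\varphi}_{\mu,0,\beta,\sigma_1^2}(z)-\widehat{{\cal M}}_{1/2,\alpha,\sigma_0^2}(z)}{\widehat{{\cal M}}_{1/2,\alpha,\sigma_0^2}(z)}\right)^2{\rm d}z<\infty,
\end{equation*}
using that $\widehat{{\cal M}}_{1/2,\alpha,\sigma_0^2}(z)z^{d+1}$ is bounded away from $0$ and $\infty$, which follows from~\eqref{stein1} at $\nu=1/2$.

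First I would insert~\eqref{stein1} with $\nu=1/2$ and Theorem~\ref{the356} Points~\ref{4.1}--\ref{4.2} into the ratio. Since $2\lambda=d+1$, the leading generalized Wendland term $c_{3}^{\boldsymbol{\varsigma}}(z\beta)^{-(d+1)}$ matches the $z^{-(d+1)}$ decay of the Mat{\'e}rn spectrum, so division yields a constant prefactor, which I denote $w_1$ in analogy with the proof of Theorem~\ref{ThmX}; here the role of $L^{\boldsymbol{\varsigma}}$ is played by $K^{\boldsymbol{\varsigma}}$, the leading constant appearing in Theorem~\ref{the356}. This constant combines with the $-1$ coming from the denominator, and the remaining terms split, exactly as in items (i)--(vii) of the proof of Theorem~\ref{ThmX}, into a smooth tail of relative order $\mathcal{O}(z^{-2})$ and an oscillatory term carrying $\cos(\beta z-c_{5}^{\boldsymbol{\varsigma}})$ of order $z^{(d+1)/2-\mu}$.

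The crux is to show that $w_1=1$ is equivalent to~\eqref{coutinho}. Setting $w_1=1$ gives
\begin{equation*}
\sigma_{0}^{2}\alpha^{-2\nu}=\frac{K^{\boldsymbol{\varsigma}}c_{3}^{\boldsymbol{\varsigma}}\Gamma(1/2)\pi^{d/2}}{\Gamma(1/2+d/2)}\,\sigma_{1}^{2}\beta^{-1},
\end{equation*}
and, evaluating $K^{\boldsymbol{\varsigma}}$ and $c_{3}^{\boldsymbol{\varsigma}}$ at $\boldsymbol{\varsigma}=(\mu,0,d)^{\prime}$, the product $K^{\boldsymbol{\varsigma}}c_{3}^{\boldsymbol{\varsigma}}$ collapses, via $\Gamma(\mu+1)/\Gamma(\mu)=\mu$ and $\mu+2\lambda=\mu+d+1$, to $2^{1-d}\pi^{-d/2}\mu\Gamma(d)/\Gamma(d/2)$, reproducing precisely $R_{\mu}$. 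With $w_1=1$ the leading constant cancels the $-1$, and the two residual pieces yield square-integrable integrands against $z^{d-1}$ for $d=1,2,3$: the squared smooth tail produces an integrand of order $z^{d-5}$, finite for $d=1,2,3$, while the squared oscillatory term produces one of order $z^{2d-2\mu}$, finite precisely when $\mu>d+1/2$. I do not expect a genuinely new difficulty, since the argument is structurally identical to Theorem~\ref{ThmX}; the only nontrivial content is the constant reduction above, and the main care required is the bookkeeping of Gamma factors when passing from the general constants to their $\kappa=0$ specializations. Finally, note that $\mu>d+1/2$ is exactly $\mu>\lambda+d/2$ at $\lambda=(d+1)/2$, so the hypotheses align with those of Theorem~\ref{ThmX}, and the condition comfortably implies the positive-definiteness requirement $\mu\ge(d+1)/2$ of the Askey function.
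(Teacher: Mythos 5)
Your proposal is correct and follows essentially the same route as the paper, which simply states that the proof repeats the argument of Theorem~\ref{ThmX} with \eqref{stein1} and Theorem~\ref{the356} (Points 1 and 2) in place of Theorem~\ref{the3}. Your explicit reduction of $K^{\boldsymbol{\varsigma}}c_{3}^{\boldsymbol{\varsigma}}$ at $\boldsymbol{\varsigma}=(\mu,0,d)^{\prime}$ to $2^{1-d}\pi^{-d/2}\mu\Gamma(d)/\Gamma(d/2)$, and the resulting identification of $w_1=1$ with \eqref{coutinho}, together with the integrability checks $z^{d-5}$ (needing $d\le 3$) and $z^{2d-2\mu}$ (needing $\mu>d+1/2$), correctly supply the details the paper leaves implicit.
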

\begin{proof}
The proof follows the same arguments exposed for the case $\kappa>0$ in Theorem 5 , but using~\eqref{stein1} and  Theorem~\ref{the356} (Points 1 and 2).
In this case, it can be shown that if  $\mu>d+1/2$, $d=1,2,3$ and $\left(\frac{\mu 2^{1-d}\Gamma(1/2)\Gamma(d)}{\Gamma(1/2+d/2)\Gamma(d/2)}\right)\sigma_{1}^{2}\beta^{-1}=\sigma_{0}^{2}\alpha^{-2\nu}$
   then~\eqref{eq:99} holds.
\end{proof}

{\bf Remark}:
In Theorems 5  and 6 since $\nu=\kappa+1/2$  for $\kappa\geq0$, using the duplication formula of the gamma function,
we easily obtain  $C_{\kappa+1/2,\kappa,\mu}=\mu {\Gamma(2\kappa+\mu+1)}/{\Gamma(\mu+1)}$,
and $R_{\mu}=\mu$ in (\ref{cafu}) and  (\ref{coutinho})   respectively.
Then the  sufficient condition for $P({\cal M}_{\nu,\alpha,\sigma^2_0}) \equiv  P(\varphiall[][1])$ can  be simplified as :
\begin{equation}\label{true}
\sigma_{0}^{2}\alpha^{-2\nu}=\left( \frac{\mu\Gamma(2\kappa+\mu+1)}{\Gamma(\mu+1)} \right) \sigma_{1}^{2}\beta^{-(1+2\kappa)},
\end{equation}
$ \nu=\kappa+1/2$, $\mu > \lambda+d/2$ and $d=1,2,3$ for  $\kappa\geq0$.

\section{Asymptotic properties of the ML estimation for the GW model}\label{4}

We now focus on the microergodic parameter  $\sigma^{2}\beta^{-( 1+2\kappa)}$ associated to  the GW family. The following results fix the  asymptotic properties of its ML estimator. In particular, we will show that the microergodic parameter can be estimated consistently, and then we will assess the asymptotic distribution of the ML estimator.

Let $D\subset \R^d$ be  a bounded subset of $ \R^d$ 
and  $S_n=\{ \ss_1,\ldots,\ss_n \in D \subset \R^d \}$
 denote any set of distinct locations.
Let $\bZ_n=(Z(\boldsymbol{s}_1),\ldots,Z(\boldsymbol{s}_n))^{\prime}$
be a finite  realization of   a zero mean stationary Gaussian field with  a given parametric covariance function
$\sigma^2 \phi(\cdot; \btau)$, with $\sigma^2>0$, $\btau$ a parameter vector and  $\phi$ a member of the class $\Phi_d$, with $\phi(0; \btau)=1$.

We then write 
$R_{n}(\btau)=[\phi(\|\boldsymbol{s}_i-\boldsymbol{s}_j\|; \btau)]_{i,j=1}^n$ for the associated correlation matrix.
The  Gaussian log-likelihood function is defined as:
\begin{equation}\label{eq:17}
\mathcal{L}_{n}(\sigma^{2},\btau)=-\frac{1}{2} \left(n\log(2\pi\sigma^{2})+\log(|R_{n}(\btau)|)+\frac{1}{\sigma^{2}}\bZ_n^{\prime}R_{n}(\btau)^{-1}\bZ_n \right).
\end{equation}
Under the Mat{\'e}rn model, the Gaussian log-likelihood is obtained with $\phi(\cdot; \btau)\equiv {\cal M}_{\nu,\alpha,1}$
and $\btau=(\nu,\alpha)^{\prime}$.
Since in what follows  $\nu$ is assumed known and  fixed, for notation convenience, we write $\tau=\alpha$.
Let $\hat{\sigma}^2_n$ and  $\hat{\alpha}_n$ be the maximum likelihood estimator obtained maximizing
$\mathcal{L}_{n}(\sigma^{2},\alpha)$ for a fixed $\nu$.

Below, we report a result that
establishes strong consistency and asymptotic distribution of the ML estimation of the microergodic parameter of the Mat{\'e}rn  model.

\begin{theo}[\citealp{Shaby:Kaufmann:2013}] \label{theo8}
Let $Z(\boldsymbol{s})$, $\boldsymbol{s}\in D\subset \R^d$, $d=1,2,3$, be a zero mean Gaussian field  with a Mat{\'e}rn  covariance model ${\cal M}_{\nu,\alpha_0,\sigma^2_0}$.
  Suppose $(\sigma_{0}^{2},\alpha_{0})^{\prime} \in (0,\infty)\times [\alpha_{L},\alpha_{U}]$, for any $0<\alpha_{L}<\alpha_{U}<\infty$. Let $(\hat{\sigma}_{n}^{2},\hat{\alpha}_{n})^{\prime}$ maximize~\eqref{eq:17} over $(0,\infty)\times[\alpha_{L},\alpha_{U}]$. Then as $n\to\infty$,
\begin{enumerate}
\item $\hat{\sigma}_{n}^{2}/\hat{\alpha}_{n}^{2\nu}\stackrel{a.s.}{\longrightarrow} \sigma_{0}^{2}/\alpha_{0}^{2\nu}$, and
\item $\sqrt{n}(\hat{\sigma}_{n}^{2}/\hat{\alpha}_{n}^{2\nu}-\sigma_{0}^{2}/\alpha_{0}^{2\nu})\stackrel{\mathcal{D}}{\longrightarrow} \mathcal{N}(0,2(\sigma_{0}^{2}/\alpha_{0}^{2\nu})^2)$.
\end{enumerate}
\end{theo}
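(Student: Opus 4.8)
The plan is to reduce the two-parameter maximization to a one-dimensional profile problem and then lean on the equivalence of Gaussian measures of Theorem~\ref{Thm2}. Write $c_0:=\sigma_0^2/\alpha_0^{2\nu}$ for the true microergodic parameter and let $R_n(\alpha)$ be the Mat\'ern correlation matrix (unit variance) at scale $\alpha$. Maximizing~\eqref{eq:17} over $\sigma^2$ for fixed $\alpha$ yields the closed form $\hat\sigma_n^2(\alpha)=n^{-1}\bZ_n^{\prime}R_n(\alpha)^{-1}\bZ_n$, so that $\hat\sigma_n^2=\hat\sigma_n^2(\hat\alpha_n)$ and the object of interest is the profiled functional
\begin{equation*}
g_n(\alpha):=\frac{\hat\sigma_n^2(\alpha)}{\alpha^{2\nu}}=\frac{1}{n\alpha^{2\nu}}\bZ_n^{\prime}R_n(\alpha)^{-1}\bZ_n,
\end{equation*}
evaluated at the random maximizer $\hat\alpha_n$.

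For strong consistency I would first show $g_n(\alpha)\stackrel{a.s.}{\to}c_0$ for each fixed $\alpha\in[\alpha_L,\alpha_U]$. At $\alpha=\alpha_0$ this is immediate: since $\Sigma_0=\sigma_0^2R_n(\alpha_0)$, the variate $\sigma_0^{-2}\bZ_n^{\prime}R_n(\alpha_0)^{-1}\bZ_n$ is exactly $\chi^2_n$ under $P_0$, so $g_n(\alpha_0)=c_0\,\chi^2_n/n\to c_0$. For a generic $\alpha$, the measure with covariance $c_0\alpha^{2\nu}R_n(\alpha)$ shares the microergodic parameter $c_0$ with the truth and is therefore equivalent to $P_0$ by Theorem~\ref{Thm2}; under that measure the same quadratic form is exactly $\chi^2_n$, giving $g_n(\alpha)\to c_0$ almost surely there, and almost-sure statements transfer across equivalent measures. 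Upgrading these pointwise limits to uniform convergence on the compact $[\alpha_L,\alpha_U]$, through equicontinuity bounds on $\alpha\mapsto g_n(\alpha)$ obtained from $\partial_\alpha R_n(\alpha)^{-1}=-R_n(\alpha)^{-1}\bigl(\partial_\alpha R_n(\alpha)\bigr)R_n(\alpha)^{-1}$, then forces $g_n(\hat\alpha_n)\stackrel{a.s.}{\to}c_0$ regardless of where the (possibly inconsistent) $\hat\alpha_n$ lands, which is the first claim.

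For the asymptotic distribution I would anchor at the \emph{true} scale $\alpha_0\in[\alpha_L,\alpha_U]$, where no equivalence transfer is needed: under $P_0$ the exact identity $g_n(\alpha_0)=c_0\,\chi^2_n/n$ gives, by the classical central limit theorem for chi-squares,
\begin{equation*}
\sqrt{n}\bigl(g_n(\alpha_0)-c_0\bigr)\stackrel{\mathcal{D}}{\longrightarrow}\mathcal{N}\bigl(0,2c_0^2\bigr),
\end{equation*}
the factor $2$ arising as the variance of a normalized chi-square and $c_0^2$ from the scaling. It then remains to show that replacing $\alpha_0$ by the estimate is asymptotically negligible, i.e.\ $\sqrt{n}\bigl(g_n(\hat\alpha_n)-g_n(\alpha_0)\bigr)=o_P(1)$, after which Slutsky's theorem delivers the stated limit for $\hat\sigma_n^2/\hat\alpha_n^{2\nu}$.

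The main obstacle is precisely this reduction. Because $\hat\alpha_n$ need not converge to $\alpha_0$ (the scale is not microergodic), continuity alone is useless; one must instead prove that the $\sqrt{n}$-scaled profile is asymptotically \emph{flat} in $\alpha$, that is
\begin{equation*}
\sup_{\alpha\in[\alpha_L,\alpha_U]}\Bigl|\tfrac{1}{\sqrt{n}}\bZ_n^{\prime}M_n(\alpha)\bZ_n\Bigr|\stackrel{P}{\longrightarrow}0,\qquad M_n(\alpha):=\alpha^{-2\nu}R_n(\alpha)^{-1}-\alpha_0^{-2\nu}R_n(\alpha_0)^{-1}.
\end{equation*}
This is handled by bounding the mean $n^{-1/2}\mathrm{tr}\bigl(\Sigma_0M_n(\alpha)\bigr)$ and the variance $n^{-1}\mathrm{tr}\bigl((\Sigma_0M_n(\alpha))^2\bigr)$ of the quadratic form and showing both vanish uniformly in $\alpha$; the driving fact is that measure equivalence forces the empirical spectrum of $\Sigma_0R_n(\alpha)^{-1}$ to collapse to the single value $c_0\alpha^{2\nu}$, so the quadratic forms at different scales become asymptotically perfectly correlated. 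This trace/spectral analysis is exactly what separates the joint-estimation result from the fixed-scale versions of \cite{Zhang:2004} and \cite{Wang:Loh:2011}.
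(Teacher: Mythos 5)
Your reduction to the profiled functional $g_n(\alpha)=\hat\sigma_n^2(\alpha)/\alpha^{2\nu}$, the exact $\chi^2_n$ representation at each fixed scale via equivalence of measures (Theorem~\ref{Thm2}), and the anchoring of the central limit theorem at $\alpha_0$ are the right ingredients; they are precisely the fixed-scale results of \cite{Zhang:2004} and \cite{Wang:Loh:2011} on which the theorem builds. Where you diverge from the actual proof in \cite{Shaby:Kaufmann:2013} --- the argument the paper reproduces for the GW analogue in Lemma~\ref{lemmaML} and Theorem~\ref{theo11} --- is in how the random $\hat\alpha_n$ is handled. Their route is a deterministic sandwich: for every $n$ and every realization, $\alpha\mapsto\hat\sigma_n^2(\alpha)/\alpha^{2\nu}$ is monotone on $[\alpha_L,\alpha_U]$, because $\alpha_2^{2\nu}R_n(\alpha_2)-\alpha_1^{2\nu}R_n(\alpha_1)$ is positive semidefinite (by~\eqref{stein1} its spectral density is proportional to the difference of $(\alpha^{-2}+z^2)^{-\nu-d/2}$ at the two scales, which is pointwise nonnegative). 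Hence $g_n(\hat\alpha_n)$ is squeezed between $g_n(\alpha_L)$ and $g_n(\alpha_U)$, and only these two \emph{fixed} endpoints ever need to be analyzed.

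Your substitute for this --- uniform stochastic equicontinuity of $g_n$ for the consistency claim, and the uniform flatness $\sup_{\alpha}\bigl|n^{-1/2}\bZ_n'M_n(\alpha)\bZ_n\bigr|\to 0$ for the distributional claim --- is a genuine gap. Even the pointwise version of the flatness statement, $\sqrt{n}\,\bigl(g_n(\alpha)-g_n(\alpha_0)\bigr)=o_P(1)$ for a single fixed $\alpha\neq\alpha_0$, is the main technical content of \cite{Wang:Loh:2011} and rests on a delicate spectral analysis of $\Sigma_0^{1/2}M_n(\alpha)\Sigma_0^{1/2}$ under equivalence of measures; promoting it to a supremum over $[\alpha_L,\alpha_U]$ is strictly harder, is not delivered by the mean and variance bounds you sketch (those control each fixed $\alpha$, not the supremum, and $R_n(\alpha)^{-1}$ has unbounded condition number under infill asymptotics, so the derivative bound you invoke for equicontinuity does not hold uniformly in $n$), and is nowhere needed once monotonicity is available. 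The missing idea is the monotonicity lemma; with it in hand, both of your anchored fixed-$\alpha$ computations combine immediately, exactly as in the proof of Theorem~\ref{theo11}, to give the stated conclusions.
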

Analogous results can be found in \citep{Zhang:2004,Wang:Loh:2011},
when $\hat{\alpha}_{n}$ is replaced by $\alpha$,  an arbitrary  positive fixed constant. \cite{Shaby:Kaufmann:2013} show, through simulation study,
that
asymptotic  approximation using a fixed scale parameter can be problematic when applied
to finite samples, even for large sample sizes. In contrast, they show  that performance
is improved and asymptotic approximations are applicable for smaller sample sizes, when
the parameters are jointly estimated.

Now, let us consider the Gaussian log-likelihood under the GW model, so that $\btau=(\mu,\kappa,\beta)^{\prime}$ and $\phi(\cdot;\btau)= \varphi_{\mu,\kappa,\beta,1}(\cdot)$ according to the previous notation. Since in what follows $\kappa$ and $\mu$ are assumed known and  fixed, for notation convenience we write $\tau=\beta$.
To prove the analogue of Theorem~\ref{theo8}  for the GW case, we consider
two types of estimators. The first maximizes~\eqref{eq:17} with respect to $ \sigma^{2}$ for  a fixed arbitrary compact support $\beta>0$,
obtaining the following estimator
 \begin{equation} \label{hoceini}
 \hat{\sigma}_n^2(\beta)=\argmax_{\sigma^2} \mathcal{L}_{n}(\sigma^{2},\beta)=\bZ_n^{\prime}R_{n}(\beta)^{-1}\bZ_n/n. \end{equation}
  Here $R_{n}(\beta)$ is the  correlation matrix  coming from the GW family $\varphi_{\mu,\kappa,\beta,1}$.
 The following result offers some asymptotic properties of the sequence of   random variables
  $\hat{\sigma}_{n}^{2}(\beta)/\beta^{( 1+2\kappa)}$.


\begin{theo}\label{theo10}
 Let $Z(\boldsymbol{s})$, $\boldsymbol{s}\in D\subset \R^d$, $d=1,2,3$,
 be a zero mean Gaussian field  with  GW covariance model
 $\varphiall[0]$, with  $\mu>\lambda+d/2$.
  Suppose $(\sigma_{0}^{2},\beta_0)\in (0,\infty)\times  (0,\infty)$. For a fixed $\beta>0$, let $\hat{\sigma}_{n}^{2}(\beta)$ as defined through Equation~\eqref{hoceini}. Then,  as $n\to\infty$,
\begin{enumerate}
\item $\hat{\sigma}_{n}^{2}(\beta)/\beta^{ 1+2\kappa} \stackrel{a.s}{\longrightarrow}\sigma_0^{2}(\beta_0)/\beta_0^{ 1+2\kappa}$ and
\item $\sqrt{n}(\hat{\sigma}_{n}^{2}(\beta)/\beta^{ 1+2\kappa}-\sigma_0^{2}(\beta_0)/\beta_0^{ 1+2\kappa})\stackrel{\mathcal{D}}{\longrightarrow}\mathcal{N}(0,2(\sigma_0^{2}(\beta_0)/\beta_0^{ 1+2\kappa})^{2})$.
\end{enumerate}
\end{theo}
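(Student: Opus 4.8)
The plan is to leverage the equivalence of Gaussian measures from Theorem~\ref{W_vs_W} to replace the misspecified fixed-$\beta$ problem by one in which the quadratic form in \eqref{hoceini} is exactly chi-squared. Write $\theta_0 := \sigma_0^2/\beta_0^{1+2\kappa}$ and set $\sigma^2(\beta) := \sigma_0^2(\beta/\beta_0)^{1+2\kappa}$, so that condition \eqref{condition1_iff} holds and the measure $P_1 := P(\varphiall[][{}])$ with variance $\sigma^2(\beta)$ is equivalent to the true measure $P_0 := P(\varphiall[0])$ on the paths of $Z(\ss)$, $\ss\in D$; note that $\sigma^2(\beta)/\beta^{1+2\kappa}=\theta_0$. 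Denoting by $\Sigma_{0,n}$ and $\Sigma_{1,n}=\sigma^2(\beta)R_n(\beta)$ the covariance matrices of $\bZ_n$ under $P_0$ and $P_1$, I would first rewrite the estimator, using $R_n(\beta)^{-1}=\sigma^2(\beta)\Sigma_{1,n}^{-1}$, as
\begin{equation*}
\hat{\sigma}_n^2(\beta)/\beta^{1+2\kappa}=\theta_0\,\frac{1}{n}\bZ_n^{\prime}\Sigma_{1,n}^{-1}\bZ_n.
\end{equation*}

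For Point~1, observe that under $P_1$ the vector $\Sigma_{1,n}^{-1/2}\bZ_n$ is standard Gaussian, so $\bZ_n^{\prime}\Sigma_{1,n}^{-1}\bZ_n\sim\chi^2_n$ and $\tfrac1n\bZ_n^{\prime}\Sigma_{1,n}^{-1}\bZ_n\stackrel{a.s.}{\longrightarrow}1$ by the strong law. Since $P_0\equiv P_1$ share the same null sets, this almost-sure limit is inherited under $P_0$, giving $\hat{\sigma}_n^2(\beta)/\beta^{1+2\kappa}\stackrel{a.s.}{\longrightarrow}\theta_0$. This transfer of null sets is the one place where equivalence does the work essentially for free.

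For Point~2 the transfer trick fails, since equivalence does not preserve limiting distributions, and the central limit theorem must be established directly under $P_0$. I would diagonalise $M_n:=\Sigma_{0,n}^{1/2}\Sigma_{1,n}^{-1}\Sigma_{0,n}^{1/2}$, with eigenvalues $\lambda_{1,n},\dots,\lambda_{n,n}$, so that under $P_0$ the form $\bZ_n^{\prime}\Sigma_{1,n}^{-1}\bZ_n$ has the law of $\sum_{i=1}^n\lambda_{i,n}W_i$ with $W_i$ i.i.d.\ $\chi^2_1$. Equivalence forces the eigenvalues to cluster at $1$, in that $\sum_i(\lambda_{i,n}-1)^2$ stays bounded and $\lambda_{\max}(M_n)=\mathcal{O}(1)$; hence $\mathrm{tr}(M_n^2)=\sum_i\lambda_{i,n}^2\sim n$, the Lindeberg--Lyapunov ratio $\lambda_{\max}(M_n)/\sqrt{\mathrm{tr}(M_n^2)}\to0$, and the central limit theorem for weighted sums of independent $\chi^2_1$ variables gives
\begin{equation*}
\frac{\bZ_n^{\prime}\Sigma_{1,n}^{-1}\bZ_n-\mathrm{tr}(M_n)}{\sqrt{n}}\stackrel{\mathcal{D}}{\longrightarrow}\mathcal{N}(0,2).
\end{equation*}
Decomposing $\sqrt{n}\bigl(\tfrac1n\bZ_n^{\prime}\Sigma_{1,n}^{-1}\bZ_n-1\bigr)$ as this centred term plus the bias $n^{-1/2}(\mathrm{tr}(M_n)-n)$, multiplying by $\theta_0$, and applying Slutsky would then yield the stated limit $\mathcal{N}(0,2\theta_0^2)$.

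The main obstacle is the bias $n^{-1/2}(\mathrm{tr}(M_n)-n)=n^{-1/2}\sum_i(\lambda_{i,n}-1)$. Equivalence only guarantees square-summability of $\lambda_{i,n}-1$, which does not by itself force this term to vanish. I would control it by exploiting the sharper spectral information in Theorem~\ref{the3}, Point~\ref{3.2}: under \eqref{condition1_iff} the leading $z^{-2\lambda}$ terms in $\hatvarphiall[1](z)$ and $\hatvarphiall[0](z)$ cancel, so the relative error $\hatvarphiall[1](z)/\hatvarphiall[0](z)-1$ has leading correction of order $z^{-2}$; this makes the spectral integral governing $\mathrm{tr}(\Sigma_{1,n}^{-1}\Sigma_{0,n})-n$ converge, so that $\mathrm{tr}(M_n)-n$ stays bounded and division by $\sqrt{n}$ sends the bias to zero. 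It is here, rather than in the central limit theorem itself, that the specific decay of the GW spectral density is indispensable.
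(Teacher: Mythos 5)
Your architecture for Point~2 --- diagonalize $M_n=\Sigma_{0,n}^{1/2}\Sigma_{1,n}^{-1}\Sigma_{0,n}^{1/2}$, apply a Lindeberg central limit theorem to $\sum_i\lambda_{i,n}(W_i-1)$, and isolate the bias $n^{-1/2}(\mathrm{tr}(M_n)-n)$ --- is the standard route in this literature and is consistent with how the paper proceeds (Point~1 is delegated to the proof of Theorem~3 of Zhang (2004), Point~2 to a ``quite technical and long'' supplement). The genuine gap sits exactly where you place the ``main obstacle'': you never prove that $\mathrm{tr}(M_n)-n=\sum_i(\lambda_{i,n}-1)=o(\sqrt n)$, you only assert that ``the spectral integral governing $\mathrm{tr}(\Sigma_{1,n}^{-1}\Sigma_{0,n})-n$ converges.'' There is no such integral identity for irregularly scattered designs in $d\le 3$; relating $\sum_i(\lambda_{i,n}-1)$ to the spectral densities is precisely the long argument the paper defers to the supplementary material. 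Moreover, equivalence of the measures only yields the Hilbert--Schmidt bound $\sum_i(\lambda_{i,n}-1)^2=\mathcal{O}(1)$, strictly weaker than the summability (trace-class) property you need, and the natural linear surrogate $\int_c^\infty z^{d-1}\bigl(\hatvarphiall[1](z)-\hatvarphiall[0](z)\bigr)\big/\hatvarphiall[0](z)\,\d z$ does not converge absolutely under the stated hypotheses: the $\mathcal{O}(z^{-2})$ relative error left after cancelling the $z^{-2\lambda}$ terms contributes $\int_c^\infty z^{d-3}\,\d z=\infty$ for $d\ge2$, and the oscillatory term of relative order $z^{\lambda-\mu}$ is absolutely integrable against $z^{d-1}$ only when $\mu>\lambda+d$, whereas the theorem allows $\lambda+d/2<\mu\le\lambda+d$. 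So the bias control cannot be waved through on decay rates alone; it requires either a genuine trace-class argument or a direct comparison of $\bZ_n'\Sigma_{1,n}^{-1}\bZ_n$ with $\bZ_n'\Sigma_{0,n}^{-1}\bZ_n$ (whose difference converges almost surely under equivalence), in the spirit of Du, Zhang and Mandrekar (2009) and Wang and Loh (2011).

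A secondary, repairable point: in Point~1 the strong law does not apply to $\tfrac1n\bZ_n'\Sigma_{1,n}^{-1}\bZ_n$ merely because each quadratic form is $\chi^2_n$ under $P_1$; you need the forms to be nested partial sums of a single i.i.d.\ sequence, which follows from the Cholesky/innovations representation $\bZ_n'\Sigma_{1,n}^{-1}\bZ_n=\sum_{i\le n}e_i^2/v_i$ rather than from the (non-triangular) square root $\Sigma_{1,n}^{-1/2}$. With that fix, your transfer of the almost-sure limit from $P_1$ to $P_0$ via Theorem~\ref{W_vs_W} is exactly Zhang's argument, which the paper cites for this part.
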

\begin{proof}
The proof of the first assertion follows the same arguments of the
  proof of Theorem 3 in \cite{Zhang:2004}, and we omit it.
  The proof of the second assertion is quite technical  and long  and it has been deferred to the supplementary material.
\end{proof}

The second type of estimation technique considers the  joint maximization of~\eqref{eq:17}  with respect
to
$(\sigma^{2},\beta)\in (0,\infty)\times I$, where $I=[\beta_L, \beta_U]$ and $0<\beta_L<\beta_U<\infty$.
The solution of this optimization problem is  given by
$(\hat{\sigma}_n^2(\hat{\beta}_n),\hat{\beta}_n)$ where
 $$\hat{\sigma}_n^2(\hat{\beta}_n)= \bZ_n^{\prime}R_{n}(\hat{\beta}_n)^{-1}\bZ_n/n$$
and $\hat{\beta}_n=\argmax_{\beta \in I} \mathcal{PL}_{n}(\beta)$.  Here $\mathcal{PL}_{n}(\beta)$
is the profile log-likelihood:
\begin{equation}\label{eq:prof}
\mathcal{PL}_{n}(\beta)=-\frac{1}{2} \left( \log(2\pi)+n\log(\hat{\sigma}_n^2(\beta))+\log|R_{n}(\beta)| +n \right).\end{equation}
In order to establish strong consistency and asymptotic distribution
 of the sequence of   random variables
  $\hat{\sigma}_{n}^{2}(\hat{\beta}_{n})/\hat{\beta}_{n}^{ 1+2\kappa}$,  we use the following Lemma that establishes
the monotone behaviour of   $\hat{\sigma}_{n}^{2}(\beta)/\beta^{ 1+2\kappa}$ when viewed as a function of $\beta \in I$
under specific condition on the $\mu$ parameter.


\begin{lemma}\label{lemmaML}
 For any $ \beta_1< \beta_2 \in I$ and for each $n$, 
 $\hat{\sigma}_{n}^{2}(\beta_1)/\beta_1^{ 1+2\kappa} \leq \hat{\sigma}_{n}^{2}(\beta_2)/\beta_2^{ 1+2\kappa} $
if and only if $ \mu\geq \lambda+3$.
\end{lemma}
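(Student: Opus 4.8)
The quantity to analyze is $\hat{\sigma}_n^2(\beta)/\beta^{1+2\kappa} = \bZ_n^{\prime} R_n(\beta)^{-1} \bZ_n/(n\beta^{1+2\kappa})$. Since $\bZ_n$ is fixed for each $n$, the monotonicity claim is a purely deterministic statement about the matrix-valued function $\beta \mapsto R_n(\beta)^{-1}/\beta^{1+2\kappa}$, and it must hold for \emph{every} realization $\bZ_n$ and every configuration $S_n$. The plan is therefore to show that the quadratic form $\bZ_n^{\prime} \big(R_n(\beta)^{-1}/\beta^{1+2\kappa}\big) \bZ_n$ is nondecreasing in $\beta$ for all $\bZ_n$ if and only if $\mu \geq \lambda + 3$; equivalently, that the matrix $R_n(\beta)^{-1}/\beta^{1+2\kappa}$ is monotone nondecreasing in the Loewner order. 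First I would reduce to a differential statement: writing $A(\beta) = R_n(\beta)^{-1}/\beta^{1+2\kappa}$, the claim is $A(\beta_1) \preceq A(\beta_2)$ for $\beta_1 < \beta_2$, which (for differentiable $A$) is equivalent to $\frac{\d}{\d\beta} A(\beta) \succeq 0$ for all $\beta \in I$.

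Next I would compute this derivative. Using $\frac{\d}{\d\beta} R_n(\beta)^{-1} = -R_n(\beta)^{-1} R_n'(\beta) R_n(\beta)^{-1}$, one gets
\begin{equation*}
\frac{\d}{\d\beta} A(\beta) = -\beta^{-(1+2\kappa)} R_n^{-1}\Big( R_n'(\beta) + \tfrac{1+2\kappa}{\beta} R_n(\beta)\Big) R_n^{-1}.
\end{equation*}
Because conjugation by the invertible symmetric matrix $R_n^{-1}$ preserves the Loewner order, the sign of $\frac{\d}{\d\beta}A(\beta)$ is governed by the sign of the symmetric matrix $M(\beta) := -\big(R_n'(\beta) + \tfrac{1+2\kappa}{\beta}R_n(\beta)\big)$. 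Thus $A$ is nondecreasing iff $M(\beta) \succeq 0$ for all $\beta$, i.e. iff the matrix with entries $-\big(\partial_\beta + \tfrac{1+2\kappa}{\beta}\big)\varphi_{\mu,\kappa,\beta,1}(\|\ss_i - \ss_j\|)$ is positive semidefinite for every point set. The natural route is then to exhibit this as a matrix generated by a single scalar function of distance and invoke the characterization of positive definiteness in $\Phi_d$: one shows that the function $r \mapsto -\big(\partial_\beta + \tfrac{1+2\kappa}{\beta}\big)\varphi_{\mu,\kappa}(r/\beta)$ belongs to $\Phi_d^{\beta}$ precisely when $\mu \geq \lambda + 3$. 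The cleanest way to see the threshold is via the explicit integral representation \eqref{WG2*}: differentiating under the integral sign and collecting terms should produce, up to a positive factor, a function of the same GW type but with the support/smoothness index shifted, whose admissibility in $\Phi_d$ is governed exactly by condition \eqref{lawea} applied at the shifted parameter, yielding $\mu \geq \lambda + 3$.

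I expect the \textbf{main obstacle} to be pinning down the exact constant ``$3$'' in the threshold and the ``if and only if.'' The \emph{if} direction reduces to verifying positive definiteness of the derived kernel, which should follow from a Schoenberg-type spectral argument using Theorem~\ref{the3}: the differential operator $\partial_\beta + \tfrac{1+2\kappa}{\beta}$ acts on the spectral side by multiplication and a scaling, and one must check that the resulting isotropic spectral density stays nonnegative and has the right decay, with the borderline $\mu = \lambda + 3$ emerging from an integrability/nonnegativity constraint. The \emph{only if} direction is more delicate: one must produce, for $\mu < \lambda + 3$, a specific point configuration $S_n$ and a vector $\bZ_n$ for which $M(\beta)$ has a negative eigenvalue, thereby reversing the inequality; the standard device is to take $n$ large and locations densely packed so that the quadratic form approximates an integral against the spectral density, whose sign is then dictated by whether the spectral density of the derived kernel fails nonnegativity. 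Managing the differentiation of the beta-function normalizing constants $B(2\kappa,\mu+1)$ with respect to $\beta$ (they are $\beta$-free, which is the feature that makes the operator $\partial_\beta + \tfrac{1+2\kappa}{\beta}$ act so cleanly) and verifying that no additional boundary terms spoil the integral representation is where the routine-but-careful bookkeeping lives.
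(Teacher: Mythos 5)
Your opening reduction is the same as the paper's: the claim is a deterministic Loewner-order statement about $\beta\mapsto\beta^{-(1+2\kappa)}R_n(\beta)^{-1}$, holding for every $\bZ_n$ and every configuration $S_n$. From there, however, you diverge: the paper does not differentiate. It compares the two fixed values $\beta_1<\beta_2$ directly, uses the anti-monotonicity of matrix inversion on the positive definite cone to pass from $R_n(\beta_1)^{-1}\beta_1^{-(1+2\kappa)}-R_n(\beta_2)^{-1}\beta_2^{-(1+2\kappa)}\succeq 0$ to the equivalent condition that $B=R_n(\beta_2)\beta_2^{1+2\kappa}-R_n(\beta_1)\beta_1^{1+2\kappa}\succeq 0$, observes that $B$ is generated by the scalar radial function $r\mapsto\beta_2^{1+2\kappa}\varphi_{\mu,\kappa,\beta_2,1}(r)-\beta_1^{1+2\kappa}\varphi_{\mu,\kappa,\beta_1,1}(r)$, and then invokes Theorem~2 of \cite{Porcu:Zastavnyi:Xesbaiat}, which states that this difference is positive definite on $\R^d$ if and only if $\mu\ge\lambda+3$. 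Your infinitesimal version (reducing to $M(\beta)=-(R_n'(\beta)+\tfrac{1+2\kappa}{\beta}R_n(\beta))\succeq 0$) is a legitimate alternative formulation of the same monotonicity, and your derivative computation is correct.

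The genuine gap is that you never establish the one fact that carries the lemma: that the derived kernel is positive definite precisely when $\mu\ge\lambda+3$. Your sketch asserts that differentiating the integral representation \eqref{WG2*} ``should produce a function of the same GW type with shifted index, whose admissibility is governed by condition \eqref{lawea} at the shifted parameter, yielding $\mu\ge\lambda+3$.'' This does not work as stated: condition \eqref{lawea} reads $\mu\ge(d+1)/2+\kappa$, and no natural shift of $\mu$, $\kappa$ or $d$ induced by the operator $\partial_\beta+\tfrac{1+2\kappa}{\beta}$ produces a flat increment of $3$ in $\mu$ via that condition. The threshold $\lambda+3$ is a substantially deeper statement about when a \emph{difference} of two rescaled GW functions remains positive definite (equivalently, about the sign of the corresponding difference of ${}_1F_2$ spectral densities for all frequencies), and it is exactly the content of the external theorem the paper cites. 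Both the ``if'' and the ``only if'' halves of your argument rest on this uncharacterized threshold, so as written the proposal reduces the lemma to an unproved claim rather than proving it. To close the gap you would either need to cite Theorem~2 of \cite{Porcu:Zastavnyi:Xesbaiat} (at which point the discrete two-point comparison is simpler than the differential route) or carry out the full spectral analysis of the derived kernel, including exhibiting a configuration witnessing failure of positive definiteness when $\mu<\lambda+3$.
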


\begin{proof}
The proof follows  \cite{Shaby:Kaufmann:2013}.
Let  $0<\beta_1<\beta_2$, with $ \beta_1, \beta_2 \in I$. Then, for any $\bZ_n$,
$$\hat{\sigma}_{n}^{2}(\beta_1)/\beta_1^{ 1+2\kappa}- \hat{\sigma}_{n}^{2}(\beta_2)/\beta_2^{ 1+2\kappa}=\frac{1}{n}
\bZ_n^{\prime}(R_{n}(\beta_1)^{-1}\beta_1^{-( 1+2\kappa)} -R_n(\beta_2)^{-1}\beta_2^{-( 1+2\kappa)})\bZ_n $$
is nonnegative if the matrix  $R_{n}(\beta_1)^{-1}\beta_1^{-( 1+2\kappa)} -R_n(\beta_2)^{-1}\beta_2^{-( 1+2\kappa)}$  is positive semidefinite and this happens if and only if  the matrix
  $B=R_{n}(\beta_2)\beta_2^{ 1+2\kappa} -R_n(\beta_1)\beta_1^{ 1+2\kappa}$
  with generic element
    $$B_{ij}=\beta_2^{ 1+2\kappa}\varphi_{\mu,\kappa,\beta_2,1}(||\ss_i-\ss_j||)      -  \beta_1^{ 1+2\kappa}\varphi_{\mu,\kappa,\beta_1,1}(||\ss_i-\ss_j||). $$
  is  positive semidefinite.
 From Theorem 2 in \cite{Porcu:Zastavnyi:Xesbaiat}, this happens  if and only if  $\mu\geq \lambda+3$.
\end{proof}

\begin{theo}\label{theo11}
Let $Z(\boldsymbol{s})$, $\boldsymbol{s}\in D\subset \R^d$, $d=1, 2, 3$, be a zero mean Gaussian field
with a GW covariance model
$\varphiall[0]$ with $ \mu\geq \lambda+3$.
 Suppose $(\sigma_{0}^{2},\beta_{0})\in (0,\infty)\times
I$ where  $I=[\beta_L, \beta_U]$ with $0<\beta_L<\beta_U<\infty$. Let
$(\hat{\sigma}_{n}^{2},\hat{\beta}_{n})^{\prime}$ maximize~\eqref{eq:17} over $(0,\infty)\times I$. Then as
$n\to\infty$,
\begin{enumerate}
\item $\hat{\sigma}_{n}^{2}(\hat{\beta}_{n})/\hat{\beta}_{n}^{ 1+2\kappa}   \stackrel{a.s}{\longrightarrow}\sigma_0^{2}(\beta_0)/\beta_0^{1+2\kappa}$ and
\item  $\sqrt{n}(\hat{\sigma}_{n}^{2}(\hat{\beta}_{n})/\hat{\beta}_{n}^{ 1+2\kappa}  -\sigma_0^{2}(\beta_0)/\beta_0^{ 1+2\kappa})\stackrel{\mathcal{D}}{\longrightarrow} \mathcal{N}(0,2(\sigma_0^{2}(\beta_0)/\beta_0^{ 1+2\kappa})^{2})$.
\end{enumerate}
\end{theo}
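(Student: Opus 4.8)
The plan is to combine the monotonicity of Lemma~\ref{lemmaML} with the fixed-$\beta$ asymptotics of Theorem~\ref{theo10}, following the strategy of \cite{Shaby:Kaufmann:2013}. For a fixed $\beta$ write $\hat{\theta}_n(\beta):=\hat{\sigma}_n^2(\beta)/\beta^{1+2\kappa}$ for the microergodic estimator, and set $\theta_0:=\sigma_0^2(\beta_0)/\beta_0^{1+2\kappa}$. Since $\mu\geq\lambda+3$, Lemma~\ref{lemmaML} states that $\beta\mapsto\hat{\theta}_n(\beta)$ is nondecreasing on $I$ for \emph{every} realization $\bZ_n$. As $\hat{\beta}_n\in I=[\beta_L,\beta_U]$ by construction, this yields the deterministic sandwich
\begin{equation*}
\hat{\theta}_n(\beta_L)\;\leq\;\hat{\theta}_n(\hat{\beta}_n)\;\leq\;\hat{\theta}_n(\beta_U),\qquad\text{for every }n.
\end{equation*}

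First I would prove Point~1. Applying Theorem~\ref{theo10} Point~1 at the two fixed endpoints yields $\hat{\theta}_n(\beta_L)\stackrel{a.s.}{\longrightarrow}\theta_0$ and $\hat{\theta}_n(\beta_U)\stackrel{a.s.}{\longrightarrow}\theta_0$; the two limits coincide because the almost-sure limit in Theorem~\ref{theo10} is the true microergodic parameter and does not depend on the fixed $\beta$ plugged into the estimator. The squeeze theorem applied to the sandwich then forces $\hat{\theta}_n(\hat{\beta}_n)\stackrel{a.s.}{\longrightarrow}\theta_0$, which is Point~1.

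For Point~2 I would center and rescale. With $T_n(\beta):=\sqrt{n}(\hat{\theta}_n(\beta)-\theta_0)$, monotonicity gives $T_n(\beta_L)\leq\sqrt{n}(\hat{\theta}_n(\hat{\beta}_n)-\theta_0)\leq T_n(\beta_U)$, and Theorem~\ref{theo10} Point~2 shows that $T_n(\beta_L)$ and $T_n(\beta_U)$ each converge in distribution to $\mathcal{N}(0,2\theta_0^2)$. A squeeze of distributions is not valid in general, so the decisive step is to prove that the two bounding sequences are \emph{asymptotically equivalent}, that is
\begin{equation*}
T_n(\beta_U)-T_n(\beta_L)=\sqrt{n}\bigl(\hat{\theta}_n(\beta_U)-\hat{\theta}_n(\beta_L)\bigr)\stackrel{P}{\longrightarrow}0.
\end{equation*}
Granting this, $T_n(\beta_L)$ and $T_n(\beta_U)$ share the common limit $\mathcal{N}(0,2\theta_0^2)$ and the squeezed quantity inherits it, giving Point~2.

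The asymptotic equivalence is the main obstacle. I would write each estimator as the quadratic form $\hat{\theta}_n(\beta)=\frac{1}{n\beta^{1+2\kappa}}\bZ_n^{\prime}R_n(\beta)^{-1}\bZ_n$, so that $\sqrt{n}(\hat{\theta}_n(\beta_U)-\hat{\theta}_n(\beta_L))$ is $n^{-1/2}$ times a quadratic form in the Gaussian vector $\bZ_n$ of true covariance $\sigma_0^2R_n(\beta_0)$. It then suffices to show that both its mean and its variance vanish, and these reduce to trace expressions in $R_n(\beta_L)^{-1}$, $R_n(\beta_U)^{-1}$ and $R_n(\beta_0)$. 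Controlling these traces is the delicate point: it rests on the spectral-density decay in Theorem~\ref{the3} (Point~\ref{3.3}), together with the fact that, since $\beta_L,\beta_U,\beta_0$ share the same $\kappa$, Theorem~\ref{W_vs_W} guarantees that the associated Gaussian measures are mutually equivalent once their microergodic parameters are matched, which is the structural reason different fixed supports yield asymptotically identical microergodic estimators. Because these computations replicate the quadratic-form estimates already needed for Theorem~\ref{theo10} Point~2, I would carry them out in the supplementary material and invoke them here.
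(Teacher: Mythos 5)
Your overall skeleton is the same as the paper's: use Lemma~\ref{lemmaML} (valid since $\mu\geq\lambda+3$) to sandwich $\hat{\sigma}_n^2(\hat{\beta}_n)/\hat{\beta}_n^{1+2\kappa}$ between the fixed-endpoint estimators at $\beta_L$ and $\beta_U$, then invoke Theorem~\ref{theo10} at the endpoints. Your Point~1 is exactly the paper's argument and is correct.

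For Point~2, however, you introduce a gap that the paper does not have. You assert that ``a squeeze of distributions is not valid in general'' and therefore that the decisive step is to prove $\sqrt{n}\bigl(\hat{\theta}_n(\beta_U)-\hat{\theta}_n(\beta_L)\bigr)\stackrel{P}{\longrightarrow}0$. But in this sandwich setting the distributional squeeze \emph{is} valid: if $X_n\leq Y_n\leq Z_n$ almost surely, then $P(Z_n\leq t)\leq P(Y_n\leq t)\leq P(X_n\leq t)$ for every $t$, and if $X_n$ and $Z_n$ converge in distribution to the \emph{same} limit $W$, both bounds converge to $F_W(t)$ at every continuity point, forcing $Y_n\Rightarrow W$. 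Since Theorem~\ref{theo10} Point~2 gives the same $\mathcal{N}(0,2\theta_0^2)$ limit for $T_n(\beta_L)$ and $T_n(\beta_U)$, this is all that is needed; it is precisely the argument of \cite{Shaby:Kaufmann:2013} that the paper invokes. By rejecting it, you replace an elementary step with the much stronger claim of asymptotic equivalence of the two bounding sequences, and you do not actually prove that claim: you reduce it to ``trace expressions'' whose control you describe as delicate and then defer to computations you have not carried out. That deferred step is genuinely nontrivial (it is of the same depth as the quadratic-form analysis behind Theorem~\ref{theo10} Point~2 itself, not a routine consequence of Theorem~\ref{the3} Point~\ref{3.3} or of Theorem~\ref{W_vs_W}), so as written your proof of Point~2 is incomplete. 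The fix is simply to delete the detour and apply the CDF squeeze directly. As a minor side remark, your sandwich is written with $\hat{\theta}_n$ nondecreasing in $\beta$, which is consistent with the statement of Lemma~\ref{lemmaML}; the paper's own proof writes the inequalities in the reverse order, but this does not affect either argument.
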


\begin{proof}
The proof follows  \cite{Shaby:Kaufmann:2013} which uses the same arguments in the Mat{\'e}rn case.
Let ${\cal G}_n(x)=\hat{\sigma}_{n}^{2}(x)/x^{ 1+2\kappa}  $ and
 define  the sequences ${\cal G}_n(\beta_L)$      and ${\cal G}_n(\beta_U)$.
Since $\beta_{L}\leq \hat{\beta}_{n} \leq \beta_{U}$  for every $n$, then, using Lemma  \ref{lemmaML},
${\cal G}_n(\beta_U) \leq {\cal G}_n(\hat{\beta}_{n})    \leq {\cal G}_n(\beta_L)$
for all $n$ with probability
one. Combining this with Theorem \ref{theo10}  implies the result.
\end{proof}

\section{Prediction using GW model}\label{5}
We now consider  prediction of a Gaussian field at a new location $\ss_0$,
using the GW model, under fixed domain asymptotics.
Specifically, we focus on
two properties: asymptotic efficiency  prediction and asymptotically correct
estimation of prediction variance.
\citet{Stein:1988} shows that both asymptotic
properties hold when the  Gaussian measures are equivalent.
Let $P(\varphiall[i])$, $i=1,2$, be two probability zero mean Gaussian measures. Under
$P( \varphiall[0])$, and using Theorem~\ref{W_vs_W},  both properties hold when
$\sigma_0^{2}\beta_0^{-(1+2\kappa)}=\sigma_1^{2}\beta_1^{-(1+2\kappa)}$, $\mu>\lambda+d/2$ and $d=1,2,3$.

Similarly, let $P( {\cal M}_{\nu,\alpha,\sigma_2^2})$  and $P( \varphiall[1])$
be two Gaussian measures with Mat{\'e}rn and  GW model.
Under
 $P({\cal M}_{\nu,\alpha,\sigma_2^2})$      both properties hold when (\ref{true}) is true, $\mu>\lambda+d/2$,  $d=1,2,3$.
 Actually, \cite{Stein:1993} gives a substantially weaker condition for asymptotic efficiency  prediction based on
the asymptotic behaviour of the ratio of the isotropic spectral densities. Now, let
\begin{equation}\label{blup}
\widehat{Z}_{n}(\mu,\kappa,\beta)=\c_n(\mu,\kappa,\beta)^{\prime}R_{n}(\mu,\kappa,\beta)^{-1}\bZ_n
\end{equation}
be the best linear unbiased predictor at an unknown location $\ss_0\in D\subset \R^d$,
under the misspecified model $P( \varphiall)$,
where $\c_n(\mu,\kappa,\beta)=[\varphi_{\mu,\kappa,\beta,1}(\|\boldsymbol{s}_0-\boldsymbol{s}_i \|)]_{i=1}^n$
 and $R_{n}(\mu,\kappa,\beta)=[\varphi_{\mu,\kappa,\beta,1}(\|\boldsymbol{s}_i-\boldsymbol{s}_j \|) ]_{i,j=1}^n$
 is the correlation matrix.

If the correct model is $P( \varphiall[0])$, then the mean squared error of the predictor is given by:
\begin{align}\label{mse_miss}
&\var_{\mu,\kappa,\beta_0,\sigma^2_0}\left[\widehat{Z}_{n}(\mu,\kappa,\beta)-Z(\boldsymbol{s}_0)\right]=\sigma_0^2\Big(1-2\c_n(\mu,\kappa,\beta)^{\prime}R_{n}(\mu,\kappa,\beta)^{-1}\c_n(\mu,\kappa,\beta_0)\\ &\quad+ \c_n(\mu,\kappa,\beta)^{\prime}R_{n}(\mu,\kappa,\beta)^{-1} R_{n}(\mu,\kappa,\beta_0) R_{n}(\mu,\kappa,\beta)^{-1}\c_n(\mu,\kappa,\beta)\Big)\nonumber.
\end{align}
In the case that
$\beta_0= \beta$, i.e., true and wrong models coincide, this expression simplifies to
\begin{align}\label{msetrue}
\var_{\mu,\kappa,\beta_0,\sigma^2_0}\big[&\widehat{Z}_{n}(\mu,\kappa,\beta_0)-Z(\boldsymbol{s}_0)\big]\\
 \nonumber   &=\sigma_0^2\big(1-\c_n(\mu,\kappa,\beta_0)^{\prime}R_{n}(\mu,\kappa,\beta_0)^{-1}\c_n(\mu,\kappa,\beta_0)\big).
\end{align}
Similarly  $\var_{\nu,\alpha,\sigma^2_2}\big[\widehat{Z}_{n}(\mu,\kappa,\beta)-Z(\boldsymbol{s}_0)\big]$ and $\var_{\nu,\alpha,\sigma^2_2}\big[\widehat{Z}_{n}(\nu,\alpha)-Z(\boldsymbol{s}_0)\big]$ can be defined  under $P({\cal M}_{\nu,\alpha,\sigma_2^2})$, where $\widehat{Z}_{n}(\nu,\alpha)$ is the best linear unbiased predictor using the Mat{\'e}rn model.
The following results are an application of Theorems~1 and~2 of \cite{Stein:1993}.

\begin{theo}\label{kauf_3}
Let $P(\varphiall[0])$, $P(\varphiall[1])$,
$P( {\cal M}_{\nu,\alpha,\sigma^2_2})$ be three Gaussian probability measures on $D\subset \R^d$
and let $\mu>\lambda$. Then, for all $\boldsymbol{s}_0\in D$:
\begin{enumerate}
  \item Under $P(\varphiall[0])$, as $n\to \infty$,
  \begin{equation}\label{kauf3_1} \frac{\var_{\mu,\kappa,\beta_0,\sigma^2_0}\bigl[\widehat{Z}_{n}(\mu,\kappa,\beta_1)-Z(\boldsymbol{s}_0)\bigr]}{\var_{\mu,\kappa,\beta_0,\sigma^2_0}\bigl[\widehat{Z}_{n}(\mu,\kappa,\beta_0)-Z(\boldsymbol{s}_0)\bigr]}{\,\longrightarrow\,}1,
        \end{equation}
        for any fixed $\beta_1>0$.
   \item Under $P( {\cal M}_{\nu,\alpha,\sigma^2_2})$,   if $\nu=\kappa+1/2$  as $n\to \infty$,
  \begin{equation}\label{kauf3_11} \frac{\var_{\nu,\alpha,\sigma^2_2}\bigl[\widehat{Z}_{n}(\mu,\kappa,\beta_1)-Z(\boldsymbol{s}_0)\bigr]}{\var_{\nu,\alpha,\sigma^2_2}\bigl[\widehat{Z}_{n}(\nu,\alpha)-Z(\boldsymbol{s}_0)\bigr]}{\,\longrightarrow\,}1,
        \end{equation}
           for any fixed $\beta_1>0$.
  \item Under $P( \varphiall[0])$,  if $\sigma_0^{2}\beta_0^{-(1+2\kappa)}=\sigma_1^{2}\beta_1^{-(1+2\kappa)}$, then as $n\to \infty$,
 \begin{equation}\label{kauf3_2} \frac{\var_{\mu,\kappa,\beta_1,\sigma^2_1}\bigl[\widehat{Z}_{n}(\mu,\kappa,\beta_1)-Z(\boldsymbol{s}_0)\bigr]}{\var_{\mu,\kappa,\beta_0,\sigma^2_0}\bigl[\widehat{Z}_{n}(\mu,\kappa,\beta_1)-Z(\boldsymbol{s}_0)\bigr]}{\,\longrightarrow\,}1.
 \end{equation}
 \item  Under $P( {\cal M}_{\nu,\alpha,\sigma^2_2})$, if
$\mu {\Gamma(2\kappa+\mu+1)}\big/{\Gamma(\mu+1)} \times \sigma_{1}^{2}\beta_1^{-(1+2\kappa)}=\sigma_{2}^{2}\alpha^{-2\nu}$, $\nu=\kappa+1/2$, then as $n\to \infty$,
 \begin{equation}\label{kauf3_3} \frac{\var_{\mu,\kappa,\beta_1,\sigma^2_1}\bigl[\widehat{Z}_{n}(\mu,\kappa,\beta_1)-Z(\boldsymbol{s}_0)\bigr]}{\var_{\nu,\alpha,\sigma^2_2}\bigl[\widehat{Z}_{n}(\mu,\kappa,\beta_1)-Z(\boldsymbol{s}_0)\bigr]}{\,\longrightarrow\,}1.
 \end{equation}
\end{enumerate}
\end{theo}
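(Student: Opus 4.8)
The plan is to obtain all four assertions as applications of Theorems~1 and~2 of \cite{Stein:1993}, whose hypotheses are conditions on the asymptotic behaviour, as $z\to\infty$, of the ratio of the isotropic spectral densities of the true and of the misspecified model. Two regimes appear. For the asymptotic efficiency statements~\eqref{kauf3_1} and~\eqref{kauf3_11} it suffices, by Theorem~1 of \cite{Stein:1993}, that this ratio converge to a \emph{positive and finite} constant $\gamma$; for the honest mean squared error statements~\eqref{kauf3_2} and~\eqref{kauf3_3} one needs, by Theorem~2 of \cite{Stein:1993}, that $\gamma=1$. Observe that the predictor~\eqref{blup} is built from correlation matrices, so it depends only on $(\mu,\kappa,\beta)$ and not on the variance.

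First I would treat Points~1 and~2. By Theorem~\ref{the3} (Point~\ref{3.2}) the spectral density $\hatvarphiall[i]$ has leading term proportional to $(z\beta_i)^{-2\lambda}$ and an oscillatory correction of order $z^{-(\mu+\lambda)}$; since $\mu>\lambda$ this correction is negligible relative to $z^{-2\lambda}$, and hence the ratio of any two such densities converges to the ratio of their leading coefficients. In Point~1 both models are GW with the same $\mu,\kappa$, so using $d-2\lambda=-(1+2\kappa)$ the ratio tends to the positive constant $(\beta_0/\beta_1)^{1+2\kappa}$, and~\eqref{kauf3_1} follows. In Point~2 the truth is Mat\'ern, whose density decays as $z^{-(2\nu+d)}$ by~\eqref{stein1}; this matches the GW order $z^{-2\lambda}=z^{-(d+1+2\kappa)}$ exactly when $\nu=\kappa+1/2$, in which case the ratio again tends to a positive finite constant and~\eqref{kauf3_11} follows.

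For Points~3 and~4 the same expansions identify the limiting constant explicitly. In the GW-versus-GW case $\gamma=\sigma_1^2\beta_1^{-(1+2\kappa)}\big/\bigl(\sigma_0^2\beta_0^{-(1+2\kappa)}\bigr)$, so the hypothesis $\sigma_0^2\beta_0^{-(1+2\kappa)}=\sigma_1^2\beta_1^{-(1+2\kappa)}$ is precisely $\gamma=1$ and Theorem~2 of \cite{Stein:1993} gives~\eqref{kauf3_2}. In the Mat\'ern-versus-GW case (with $\nu=\kappa+1/2$) the constant $\gamma$ equals the quantity $w_1$ appearing in the proof of Theorem~\ref{ThmX}, and the stated hypothesis is exactly $w_1=1$, i.e.\ the relation~\eqref{true} (with $\sigma_2^2$ in place of $\sigma_0^2$); Theorem~2 of \cite{Stein:1993} then yields~\eqref{kauf3_3}. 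Under the stronger requirement $\mu>\lambda+d/2$ these two $\gamma=1$ conditions coincide with the equivalence of measures established in Theorems~\ref{W_vs_W} and~\ref{ThmX}, which makes transparent why the honest-MSE statements hold exactly when the competing measures are compatible.

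The computation of leading coefficients is routine; the point deserving care is the verification of the full regularity hypotheses of \cite{Stein:1993}, and in particular that the oscillatory secondary term in Theorems~\ref{the3} and~\ref{the356} does not spoil the convergence of the spectral ratio. This is exactly where the standing assumption $\mu>\lambda$ is used: it guarantees $\mu+\lambda>2\lambda$, so the $\cos(z\beta-c_5^{\boldsymbol{\varsigma}})$ contribution decays strictly faster than the leading term and the ratio converges uniformly in the tail to the ratio of leading coefficients, as Stein's conditions demand.
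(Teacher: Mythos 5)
Your proposal is correct and follows essentially the same route as the paper: both compute the limit, as $z\to\infty$, of the ratio of isotropic spectral densities from the expansions in Theorem~\ref{the3}, using $\mu>\lambda$ to kill the oscillatory $z^{-(\mu+\lambda)}$ term, and then invoke Theorem~1 of \cite{Stein:1993} for Points~1 and~2 (ratio tending to a positive finite constant) and Theorem~2 for Points~3 and~4 (constant equal to one, which is exactly the stated microergodic/compatibility conditions). The limiting constants you identify agree with the paper's \eqref{lim2} and \eqref{lim}.
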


\begin{proof}
Since $\hatvarphiall[0](z)$ is bounded away from zero and infinity  and  as $z\to \infty$,
\begin{equation*}
\begin{aligned}
\frac{\hatvarphiall[1](z)}{\hatvarphiall[0](z)} 
&=\frac{\sigma_1^2\beta_1^{d}\Big[c_{3}^{\boldsymbol{\varsigma}}\beta_1^{-2\lambda}\big\{1+\mathcal{O}(z^{-2})\big\}
\displaystyle+\,c_{4}^{\boldsymbol{\varsigma}}\beta_1^{-(\mu+\lambda)}z^{\lambda-\mu}\big\{\cos(z\beta_1-c_{5}^{\boldsymbol{\varsigma}})+\mathcal{O}(z^{-1})\big\}\Big]}
{\sigma_0^2\beta_0^{d}\Big[c_{3}^{\boldsymbol{\varsigma}}\beta_0^{-2\lambda}\big\{1+\mathcal{O}(z^{-2})\big\}
\displaystyle+\,c_{4}^{\boldsymbol{\varsigma}}\beta_0^{-(\mu+\lambda)}z^{\lambda-\mu}\big\{\cos(z\beta_0-c_{5}^{\boldsymbol{\varsigma}})+\mathcal{O}(z^{-1})\big\}\Big]}
\end{aligned}
\end{equation*}
then, for $\mu>\lambda$,  we have
 \begin{equation}\label{lim2}
 \underset{z \to \infty}{\lim}\frac{\hatvarphiall[1](z)}{\hatvarphiall[0](z)}=\frac{\sigma_1^{2}\beta_1^{-(1+2\kappa)}}{\sigma_0^{2}\beta_0^{-(1+2\kappa)}}
\end{equation}
and, using Theorem~1 of \cite{Stein:1993},  we obtain~\eqref{kauf3_1}.
If $\sigma_1^{2}\beta_1^{-(1+2\kappa)}=\sigma_0^{2}\beta_0^{-(1+2\kappa)}$  and using Theorem~2 of \cite{Stein:1993},  we obtain~\eqref{kauf3_2}.

Similarly, since  $\widehat{{\cal M}}_{\nu,\alpha,\sigma^2_2}(z)$ is bounded away from zero and infinity   and  as $z\to\infty$,
\begin{equation*}
\begin{aligned}
&\frac{\hatvarphiall[1](z)}{\widehat{{\cal M}}_{\nu,\alpha,\sigma^2_2}(z)}\\
&=\frac{\sigma_{1}^{2}\beta^{d}\Gamma(\nu)L^{\boldsymbol{\varsigma}}}{\Gamma(\nu+d/2)\sigma_{0}^{2}\alpha^{-2\nu}\pi^{-\frac{d}{2}}}\Big[c_{3}^{\boldsymbol{\varsigma}}(\beta z)^{-2\lambda}\big\{1+\mathcal{O}(z^{-2})\big\}
+ c_{4}^{\boldsymbol{\varsigma}}(z\beta)^{-(\mu+\lambda))}\\
&\quad\times\big\{\cos(\beta z-c_{5}^{\boldsymbol{\varsigma}})+\mathcal{O}(z^{-1})\big\}\Big](\alpha^{-2}+z^{2})^{\nu+\frac{d}{2}}\\
&=\frac{\sigma_{1}^{2}\beta^{d}\Gamma(\nu)L^{\boldsymbol{\varsigma}}}{\Gamma(\nu+d/2)\sigma_{0}^{2}\alpha^{-2\nu}\pi^{-\frac{d}{2}}}\Big[c_{3}^{\boldsymbol{\varsigma}}(\beta z)^{-2\lambda}\big\{1+\mathcal{O}(z^{-2})\big\}
+ c_{4}^{\boldsymbol{\varsigma}}(z\beta)^{-(\mu+\lambda))}\\
&\quad\times\big\{\cos(\beta z-c_{5}^{\boldsymbol{\varsigma}})+\mathcal{O}(z^{-1})\big\}\Big]z^{2\nu+d} \Big[1 + (\nu+d/2)(\alpha z)^{-2}+\mathcal{O}(z^{-2})\Big]\\
&=\frac{\sigma_{1}^{2}\beta^{d}\Gamma(\nu)L^{\boldsymbol{\varsigma}}}{\Gamma(\nu+d/2)\sigma_{0}^{2}\alpha^{-2\nu}\pi^{-\frac{d}{2}}}\Big[c_{3}^{\boldsymbol{\varsigma}}\beta ^{-2\lambda}z^{2\nu-2\lambda+d}\big\{1+\mathcal{O}(z^{-2})\big\}
+ c_{4}^{\boldsymbol{\varsigma}}\beta^{-(\mu+\lambda)}\\
&\quad\times z^{2\nu-(\mu+\lambda)+d}\big\{\cos(\beta z-c_{5}^{\boldsymbol{\varsigma}})+\mathcal{O}(z^{-1})\big\}\Big]\Big[1 + (\nu+d/2)(\alpha z)^{-2}+\mathcal{O}(z^{-2})\Big]
\end{aligned}
\end{equation*}
then, if $2\nu +d=2\lambda$, that is $\kappa+1/2=\nu$, $\mu>\lambda$ and considering Remark 1 then:
\begin{equation}\label{lim}
\underset{z\to\infty}{\lim}\frac{\hatvarphiall[1](z)}{\widehat{{\cal M}}_{\nu,\alpha,\sigma^2_2}(z)}=\frac{\sigma_1^{2}\beta_1^{-(1+2\kappa)}}{\sigma_{2}^{2}\alpha^{-2\nu}}\left(\mu \frac{\Gamma(2\kappa+\mu+1)}{\Gamma(\mu+1)}\right).
\end{equation}
%
Using Theorem~1  of \cite{Stein:1993}, we obtain~\eqref{kauf3_11}.
If $\sigma_1^{2}\beta_1^{-(1+2\kappa)}\left(\mu \frac{\Gamma(2\kappa+\mu+1)}{\Gamma(\mu+1)}\right)=\sigma_{2}^{2}\alpha^{-2\nu}$ and using Theorem~2 of \cite{Stein:1993},  we obtain~\eqref{kauf3_3}.
\end{proof}


The implication of Point~1 is that under $P( \varphiall[0])$,
prediction with $ \varphiall[1][0]$
with an arbitrary $\beta_1>0$
gives asymptotic prediction efficiency, if the correct value of
$\kappa$ and $\mu$ are used and $\mu>\lambda$.  By virtue of
Point~2, under $P({\cal M}_{\nu,\alpha,\sigma^2_2})$, prediction with
$\varphiall[1][0]$, with an arbitrary $\beta_1>0$, gives
asymptotic prediction efficiency, if $\nu=\kappa+{1}/{2}$,
$\mu>\lambda$.  For instance, if $\sigma_2^2 \e^{-r/\alpha}$ is
the true covariance, asymptotic prediction efficiency can be achieved
with $\sigma_0^2(1-r/\beta_1)^{\mu}_{+}$, using an arbitrary
$\beta_1$, and $\mu>1.5$ when $d=2$.
In view of Point~3, under $P(\varphiall[0])$, prediction with
$\varphiall[1]$, when
$\sigma_0^{2}\beta_0^{-(1+2\kappa)}=\sigma_1^{2}\beta_1^{-(1+2\kappa)}$
provides asymptotic prediction efficiency and asymptotically correct
estimates of error variance, if $\mu>\lambda$.  Finally, Point~4
implies that under $P( {\cal M}_{\nu,\alpha,\sigma_2^2})$, prediction
using $\varphiall[1]$, under the conditions $\mu
{\Gamma(2\kappa+\mu+1)}\big/{\Gamma(\mu+1)}
\sigma_{1}^{2}\beta_1^{-(1+2\kappa)}=\sigma_{2}^{2}\alpha^{-2\nu}$,
$\nu=\kappa+{1}/{2}$ and $\mu>\lambda$, provides asymptotic
prediction efficiency and asymptotically correct estimates of error
variance.

For instance, if $\sigma_2^2 \e^{-r/\alpha}$ is the true covariance and
$d=2$, asymptotic prediction efficiency and asymptotically correct
estimates of variance error can be achieved with
$\sigma_1^2(1-r/\beta_1)^{\mu}_{+}$ setting $\beta_1= \mu \alpha
{\sigma_1^2}{\sigma_2^{-2}}$, and $\mu>1.5$.  Setting
$\sigma^2_2=\sigma_1^2=1$, $\mu=3$, $\alpha=x/3$ ($x$ in this case is
the so-called practical range, i.e., the correlation is lower than
$0.05$ when $r>x$), the {\em equivalent} compact support is
$\beta_1=x$.
Note that in this special case, the practical range of the exponential
model and the compact support of the Askey function coincide.
Figure~\ref{cova} shows the
Mat\'ern  correlation model  with $\nu=0.5, 1, 1.5$ and  practical range equal to  $0.6$,
and two compatible GW correlation  models  when $d=2$ with $\kappa=\nu-0.5$, $\mu=\lambda+1+x$, with $x=0.5,2$  and the associated compact supports are obtained using the equivalence condition.
They are $0.601$,  $0.595$, $ 0.624$ for $\kappa=0, 0.5, 1$ respectively when $x=0.5$  and          $0.901$,  $0.821$,   $0.815 $
for $\kappa=0, 0.5, 1$, respectively,  when $x=2$.

\begin{figure}
\includegraphics[width=0.33\textwidth]{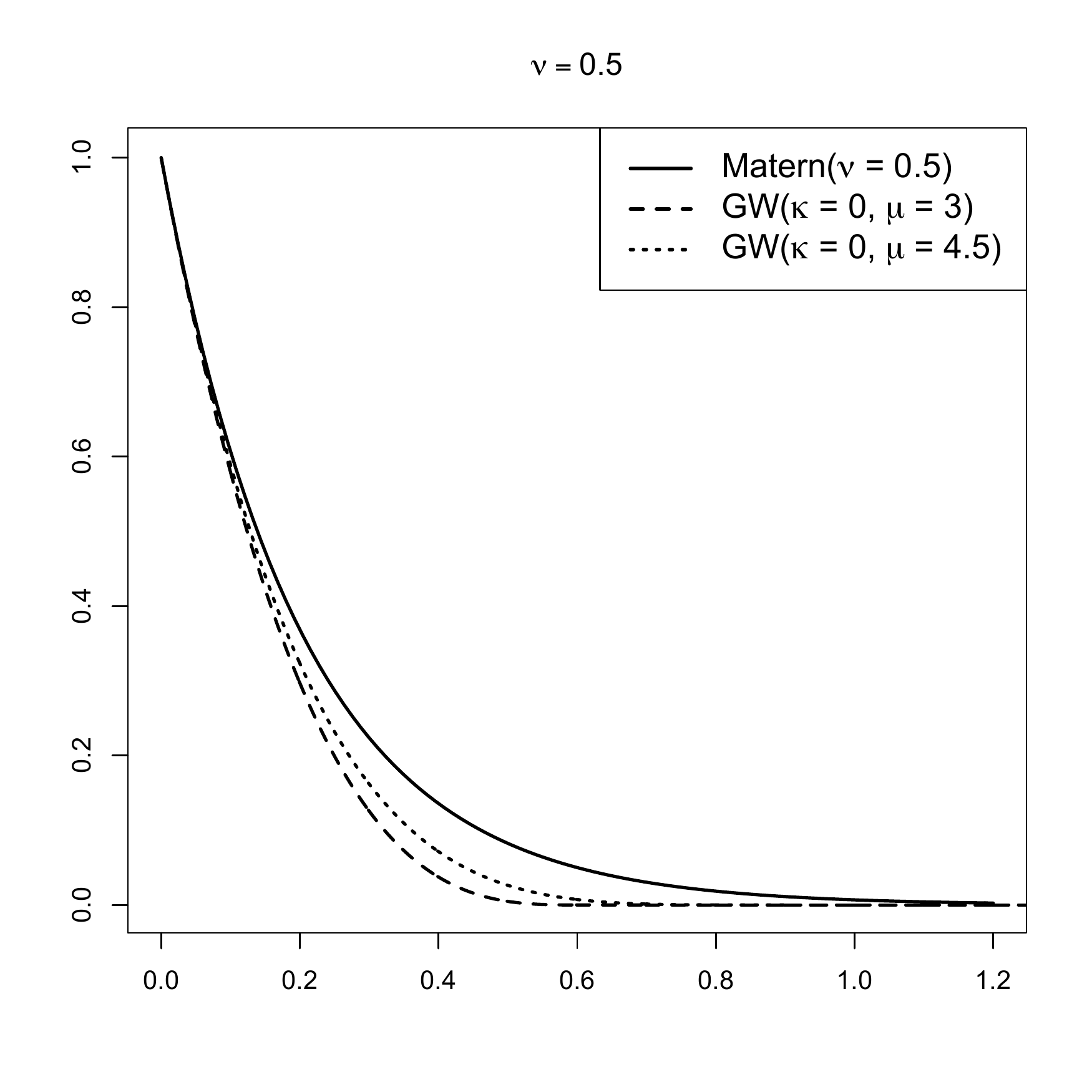}%
\hfill%
\includegraphics[width=0.33\textwidth]{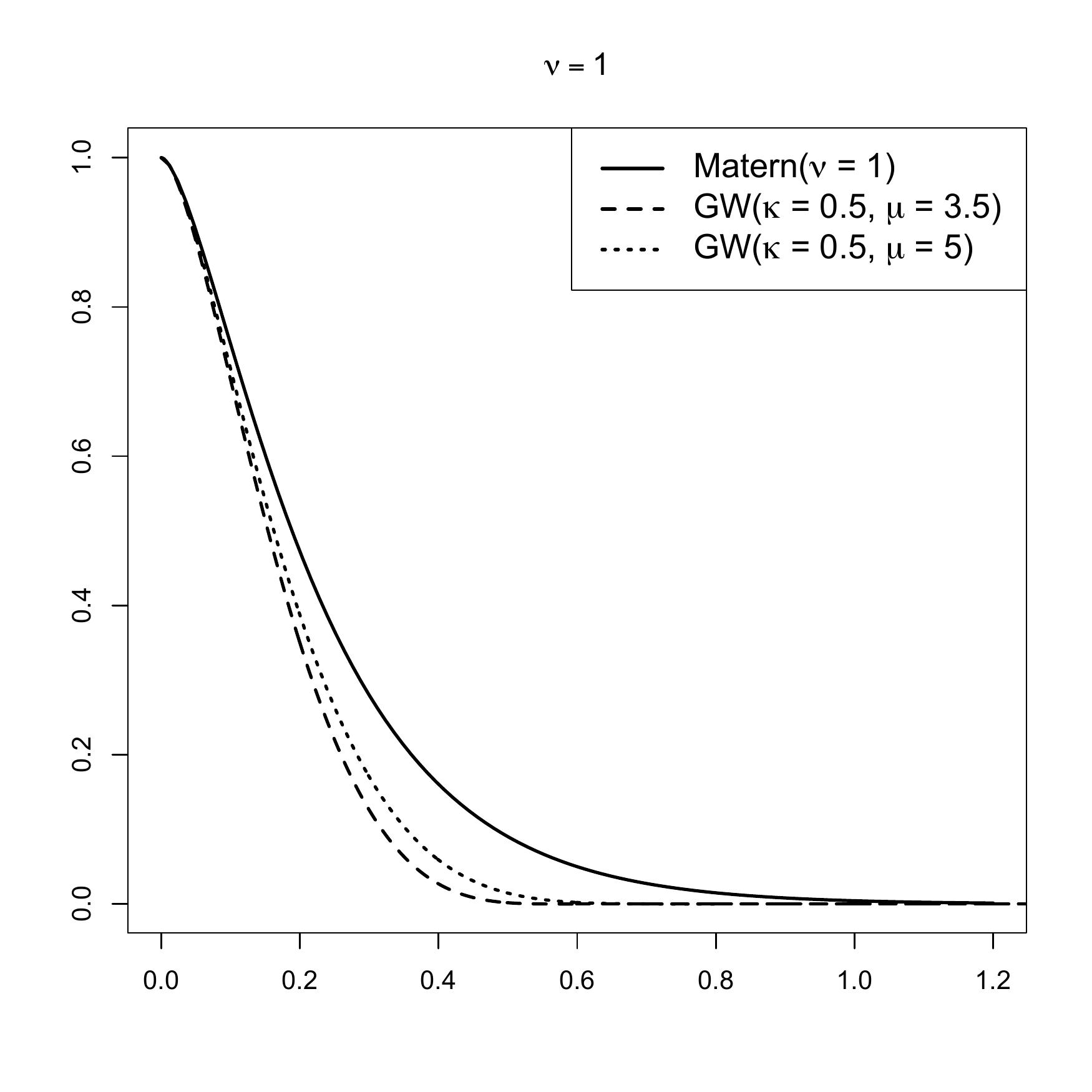}%
\hfill%
\includegraphics[width=0.33\textwidth]{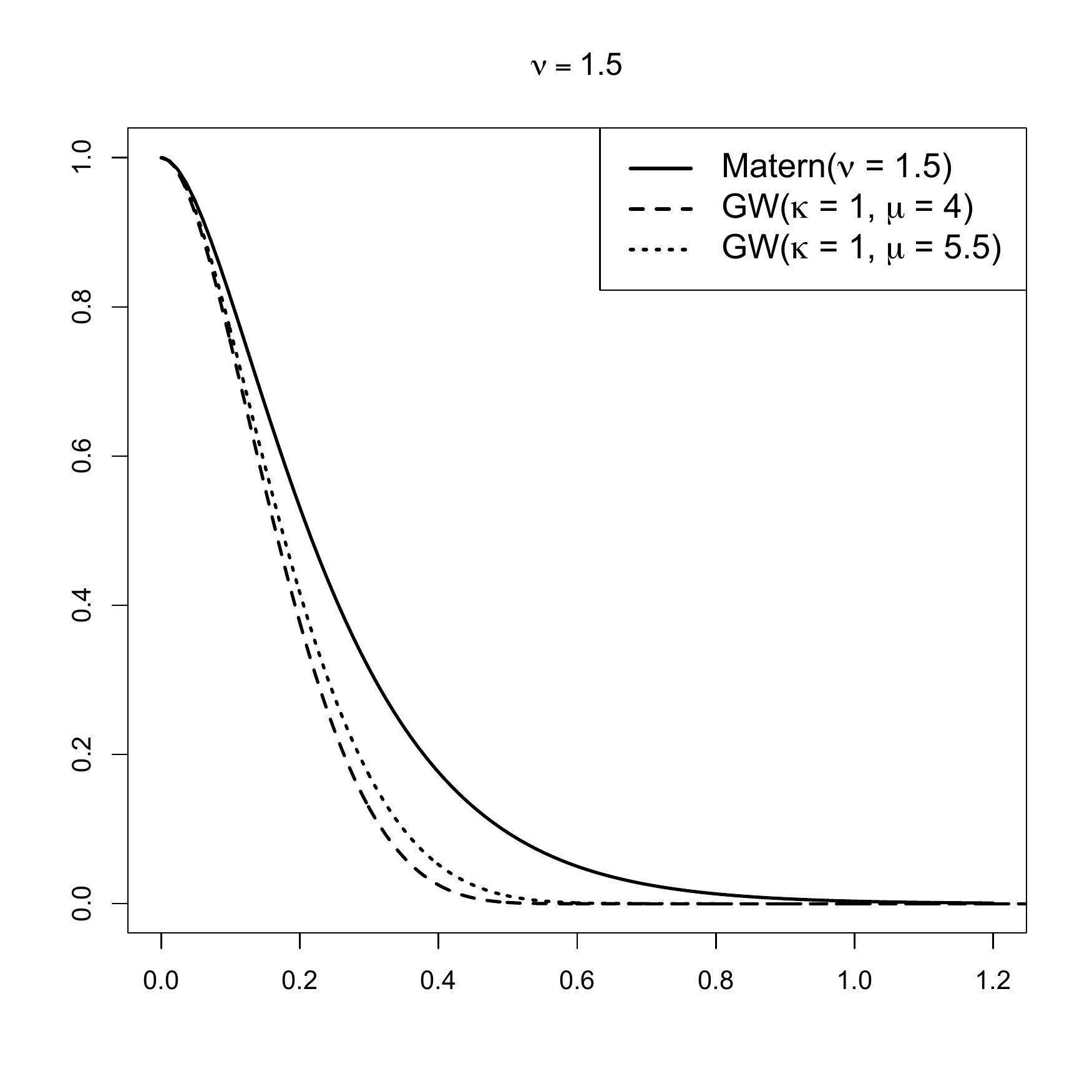}
\caption{Compatible correlation models for the case $d=2$:
The Matern model when $\nu=0.5, 1, 1.5$ (from left to right)  and the practical range  is $0.6$ and two compatibles GW models.
For the GW  models $\kappa=\nu-0.5$, $\mu=\lambda+1+x$, with $x=0.5,2$
and the compact support is fixed using the equivalence condition.}
 \label{cova}
\end{figure}

In practice, covariance parameters are unknown, so it is common to
estimate them and then plug into~\eqref{blup} and~\eqref{msetrue}.
Nevertheless, the asymptotic properties of this procedure are quite
difficult to obtain \citep{pu:ya:2001}. Instead, most theoretical
results have been given under a framework in which plug-in parameters
are fixed, rather than being estimated from observations.

As in Theorem~4 of \cite{Shaby:Kaufmann:2013},  our Points~3 and 4 may be extended to
include estimation of the variance parameter. Specifically let  $\widehat{\sigma}_{n}^{2}=\bZ_n^{\prime}R_{n}(\mu,\kappa,\beta_1)^{-1}\bZ_n/n$. Then
as $n\to \infty$,
\begin{align}
\label{kauf3_22} &\frac{\var_{\mu,\kappa,\beta_1,\widehat{\sigma}_n^2}\left[\widehat{Z}_{n}(\mu,\kappa,\beta_1)-Z(\boldsymbol{s}_0)\right]}{\var_{\mu,\kappa,\beta_0,\sigma^2_0}\left[\widehat{Z}_{n}(\mu,\kappa,\beta_1)-Z(\boldsymbol{s}_0)\right]}{\to}1, \\
\label{kauf3_33}&
 \frac{\var_{\mu,\kappa,\beta_1,\widehat{\sigma}_n^2}\left[\widehat{Z}_{n}(\mu,\kappa,\beta_1)-Z(\boldsymbol{s}_0)\right]}{\var_{\nu,\alpha,\sigma_2^2}\left[\widehat{Z}_{n}(\mu,\kappa,\beta_1)-Z(\boldsymbol{s}_0)\right]}{\to}1
. \end{align}
The proof follows the lines of  \cite{Shaby:Kaufmann:2013}, and we omit it.
As outlined in \cite{Shaby:Kaufmann:2013}, we also  conjecture that~\eqref{kauf3_22} and~\eqref{kauf3_33} hold if $\beta_1$ is replaced by its  maximum likelihood estimator.

\section{Simulations and illustrations}\label{6}
The main goals of this section are twofold: on the one hand, we compare
the finite sample behavior of the ML estimation of the microergodic
parameter of the GW model with the asymptotic distributions given in
Theorems~\ref{theo10} and~\ref{theo11}.
On the other hand, we  compare the finite sample behavior of MSE
prediction of a zero mean Gaussian field with Mat{\'e}rn covariance
model, using both a Mat{\'e}rn and a compatible GW covariance model, using  CT applied to a Mat{\'e}rn model as benchmark.

\smallskip

Regarding the first goal, we simulate, using Cholesky decomposition,
and then we estimate with ML, $1000$ realizations from a zero mean
Gaussian field with GW model.  Sampling locations are constructed as
in \cite{Kaufman:Schervish:Nychka:2008}, using a perturbed regular
grid. 
A perturbed grid helps to get more stable estimates because different sets of small distances are available to estimate the parameters.
Specifically, we have
considered a regular grid with increments $0.03$ over $[0,1]^d$,
$d=2$. Then the grid points have been perturbed, adding a uniform
random value on $[-0.01, 0.01]$ to each coordinate.
Figure~\ref{grid} shows the perturbed grid considered, from which we randomly choose  $n= 50, 100, 250, 500, 1000$ locations
without replacement.

\begin{figure}
\centering
\includegraphics[width=6cm]{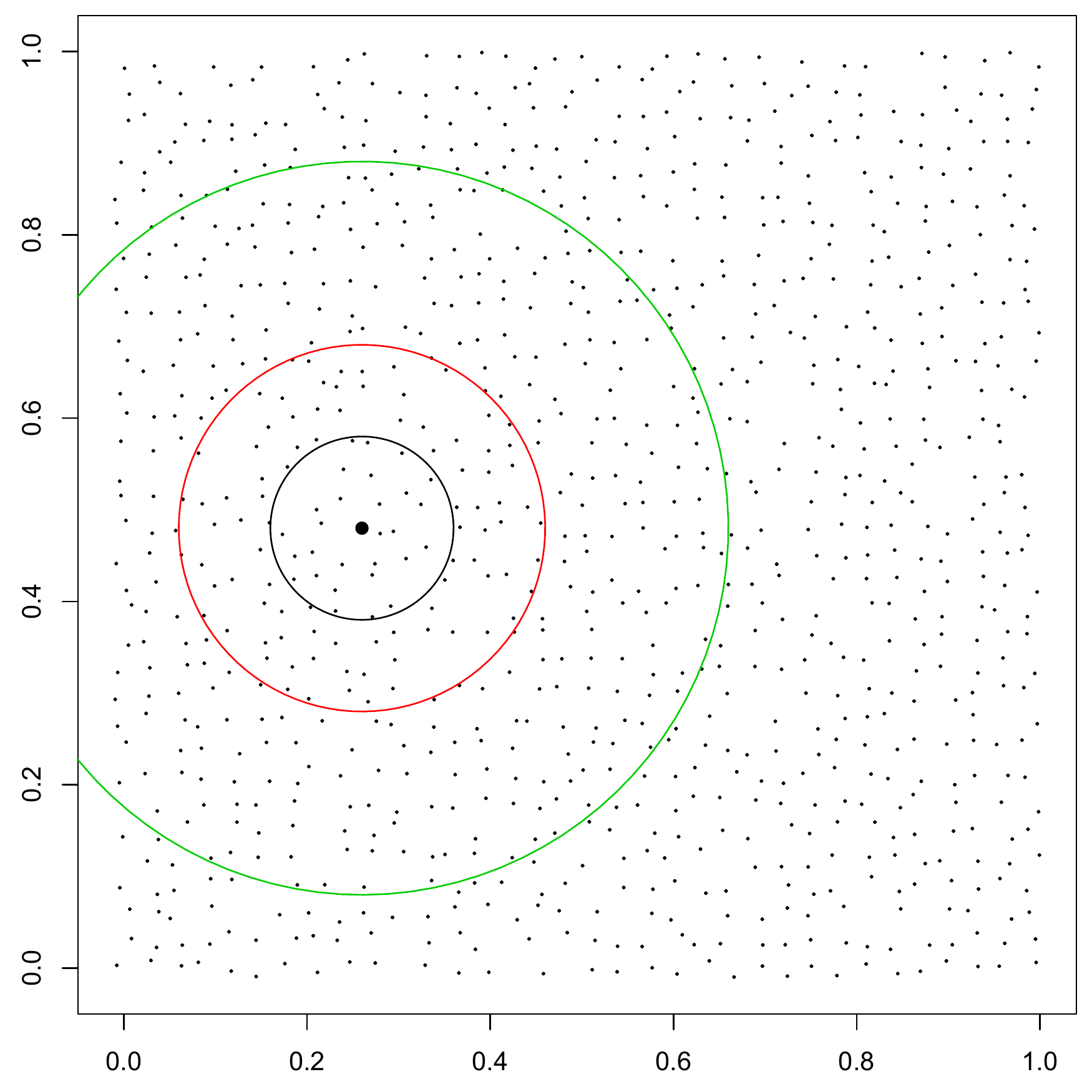}
\caption{Perturbated grid consisting of $n=1156$ considered in the simulation study. The black dot has coordinates $(0.26,0.48)$. In the circles (from smaller to larger) the location sites involved in  prediction with GW with compact support equal to $0.1, 0.2, 0.4$. } \label{grid}
\end{figure}

For the GW covariance model $\varphiall[0]$,
we use different values of the compact support and smoothness
parameters, that is $\beta_0=0.2, 0.4, 0.6$, $\kappa=0, 0.5, 1$, and fix $\sigma_0^2=1$ and, in view of Theorem~\ref{theo11},
$\mu=\lambda(2,\kappa)+3$.  For each simulation, we consider
$\kappa$ and $\mu$ as known and fixed, and we estimate with ML
the variance and compact support parameters, obtaining
$\hat{\sigma}^2_{i}$ and $\hat{\beta}_i$, $i=1,\ldots,1000$.
To estimate, we first  maximize the profile log-likelihood~\eqref{eq:prof} to
get $\hat{\beta}_i$. Then, we obtain $\hat{\sigma}_i^2(\hat{\beta}_i)=
\bz_i^{\prime}R(\hat{\beta}_i)^{-1}\bz_i/n$, where $\bz_i$ is the data
vector of simulation $i$.

Optimization was carried out using the R \citep{R:2005} function \emph{optimize}
 where,  following \cite{Shaby:Kaufmann:2013}, the
compact support parameter was restricted to the interval $
[\varepsilon, 15\beta]$ and $\varepsilon$ is slightly larger than
machine precision, about $10^{-15}$ here.

\long\def\tabletwo{

\begin{table}
  \caption{Sample quantiles, mean  and variance of $\sqrt{n/2}(\widehat{\sigma}_i^2(x)\beta_0^{1+2\kappa}/ (\sigma_0^2x^{1+2\kappa}) - 1)$, $i=1,\dots,1000$,
    for $x=\widehat{\beta}_i ,\beta_0, 0.5\beta_0, 2\beta_0$ for different values of $\kappa$, when $\beta_0=0.2$ and $n=250, 500, 1000$, compared with the associated theoretical
    values of the standard Gaussian distribution.}\label{tab21}
\medskip\centering
\scalebox{1}{
\begin{tabular}{|c|  c| c| c| c| c| c| c| c| c| }
	\hline
$\kappa$  &$x$& 	$n$ &  5$\%$ & 25$\%$ & 50$\%$ & 75$\%$ & 95$\%$ & Mean&Var \\
	\hline \multirow{12}{*}{0}
	&&250 & -2.026 &-0.856&  0.012&  1.143&  2.729&  0.177& 2.086 \\
&$\widehat{\beta}$	&500  &  -1.853 &-0.773 & 0.008  &0.949 & 2.198 & 0.104 &1.555 \\
	&&1000 &  -1.730 &-0.759& -0.010 & 0.802 & 1.880& 0.046 &1.255 \\
	\cline{2-10}
	&&250 &   -1.548& -0.670& -0.039&  0.675&  1.833   &0.025&1.058 \\
	&  $\beta_0$& 500  &-1.632& -0.665 & 0.001 & 0.661 & 1.754 &0.027& 1.047  \\
	&&1000&-1.629&-0.690 & 0.020 & 0.698 & 1.627 &0.011&1.009  \\
	\cline{2-10}
	&&250&6.432 & 8.325 & 9.611& 11.090& 13.243& 9.727&4.401  \\
	& $0.5\beta_0$& 500 &    8.893& 10.509 &11.958& 13.347& 15.469 &12.023& 4.131  \\
	&&1000&10.206 &12.130 &13.338& 14.626& 16.792 &13.427& 3.863 \\
	\cline{2-10}
	&&250 &-3.687& -3.016 &-2.491& -1.895& -0.982& -2.431& 0.693\\
&$2\beta_0$	&500& -3.694& -2.924& -2.341 &-1.773& -0.816& -2.315& 0.799  \\
	&&1000&-3.534 &-2.707& -2.096&-1.484& -0.607 & -2.087&0.842 \\
	\hline  \hline\multirow{12}{*}{0.5}
	&&250 & -2.260& -0.961 & 0.081 & 1.231&  3.252& 0.204& 2.671 \\
&$\widehat{\beta}$& 500 &-1.982 &-0.867 & 0.033  &1.062  &2.423  &0.107 &1.893  \\
	&&1000&  -1.802& -0.791 &-0.030 & 0.837  &2.006 & 0.037&1.377  \\
		\cline{2-10}
	&&250 & -1.548 &-0.670& -0.039 & 0.675 & 1.833 &0.025 &1.058  \\
    &$\beta_0$&500&-1.632 &-0.665 & 0.001 & 0.661 & 1.754 &0.027 &1.047  \\
	&&1000& -1.629 &-0.690 & 0.020 & 0.698 & 1.627 & 0.011&  1.009  \\
	\cline{2-10}
	&&250&21.663 &25.577 &28.182 &31.321 &35.457 &28.457 &18.068 \\
	   &$0.5\beta_0$& 500  &27.828& 31.502 &34.256& 37.217& 41.396 &34.432& 16.638  \\
	&&1000&31.135 &34.167 &36.694 &39.376 &43.438 &36.935& 14.650\\
	\cline{2-10}
	&&250& -5.149 &-4.594& -4.153& -3.671& -2.897& -4.105 &0.490\\
&$2\beta_0$&	500  &-5.251 &-4.546& -4.007 &-3.473& -2.605 & -3.979& 0.657  \\
	&&1000&-4.975 &-4.196 &-3.619 &-3.021& -2.207 &-3.603& 0.739 \\
	\hline  \hline\multirow{12}{*}{1}
	&&250 &-2.499 &-1.102&  0.071&  1.397&  3.669&  0.251& 3.575 \\
	&$\widehat{\beta}$&500  & -2.137 &-0.942 &0.006 &1.138  &2.738 &0.116& 2.309 \\
	&&1000 & -1.881 &-0.840& -0.013 & 0.851 & 2.126& 0.026 &1.511\\
	\cline{2-10}
	&&250 & -1.548 &-0.670 &-0.039 & 0.675 & 1.833 & 0.025& 1.058 \\
	&$\beta_0$&500  &-1.632 &-0.665  &0.001  &0.661 & 1.754 &0.027 &1.047 \\
	&&1000& -1.629 &-0.690  &0.020 & 0.698 & 1.627 & 0.011& 1.009  \\
	\cline{2-10}
	&&250 &52.484 &60.587 &65.940 &72.080 &80.575 &66.499& 73.714\\
&$0.5\beta_0$&	500 &67.395 &74.642& 80.127& 85.697& 94.740& 80.463&  68.423\\
	&&1000& 74.224 &80.409& 85.327& 90.627 &98.917 & 85.827& 59.410 \\
	\cline{2-10}
	&&250 &-6.576& -6.110 &-5.745 &-5.352 &-4.732&  -5.705& 0.325 \\
 &$2\beta_0$&	500 & -6.840 &-6.247 &-5.787 &-5.262 &-4.544 & -5.740& 0.517 \\
	&&1000& -6.563 &-5.848& -5.330& -4.771& -4.021 & -5.304 & 0.630  \\
	\hline \hline
		\multicolumn{3}{|c|}{$N (0,1)$} &-1.645 & -0.674 &  0 &  0.674 & 1.645&0&1\\
	\hline
\end{tabular}
}
\end{table}

}


\begin{table}
  \caption{Sample quantiles, mean and variance of  $\sqrt{n/2}(\widehat{\sigma}_i^2(x)\beta_0^{1+2\kappa}/ (\sigma_0^2x^{1+2\kappa}) - 1)$, $i=1,\dots,1000$,
    for $x=\widehat{\beta}_i ,\beta_0, 0.5\beta_0, 2\beta_0$ for different values of $\kappa$, when $\beta_0=0.4$ and $n=250, 500, 1000$, compared with the associated theoretical
    values of the standard Gaussian distribution.}\label{tab22}
\medskip\centering
\scalebox{1}{
\begin{tabular}{|c|  c| c| c| c| c| c| c| c| c| }
	\hline
$\kappa$  &$x$& 	$n$ &  5$\%$ & 25$\%$ & 50$\%$ & 75$\%$ & 95$\%$ & Mean&Var \\
	\hline \multirow{12}{*}{0}
	&&250 & -1.699 &-0.721& -0.020  &0.798  &2.084 &0.072 &1.375 \\
&$\widehat{\beta}$	&500  & -1.680 &-0.677  &0.027  &0.758  &1.966  &0.071 &1.212 \\
	&&1000 & -1.614& -0.666 & 0.062  &0.767 & 1.788 &0.057 &1.104 \\
	\cline{2-10}
	&&250&-1.548& -0.670& -0.039 & 0.675 & 1.833 & 0.025& 1.058  \\
	&  $\beta_0$& 500  & -1.632& -0.665& 0.001 & 0.661 & 1.754 &0.027& 1.047  \\
	&&1000&    -1.629& -0.690 & 0.020&  0.698 & 1.627 & 0.011& 1.009  \\
	\cline{2-10}
	&&250 &3.224 &4.953& 6.163& 7.471& 9.370 &6.234 &3.493 \\
	& $0.5\beta_0$& 500 &   3.399& 4.762& 5.948&7.018& 8.879&  5.979 & 2.840  \\
	&&1000&   2.792&4.063& 5.059& 5.984&7.516 &5.088 & 2.088  \\
	\cline{2-10}
	&&250 &-2.443& -1.698 &-1.128&-0.490 & 0.610&-1.065& 0.898 \\
&$2\beta_0$	&500& -2.485& -1.576&-0.941& -0.313 &0.718 & -0.904& 0.947  \\
	&&1000& -2.324 &-1.438 &-0.759& -0.107 & 0.819&  -0.757 &0.949  \\
	\hline  \hline\multirow{12}{*}{0.5}
	&&250 & -1.761& -0.786&  0.019&  0.807  &2.271&  0.072& 1.506  \\
&$\widehat{\beta}$& 500 & -1.774 &-0.714  &0.027  &0.822  &1.978 &0.063 &1.309 \\
	&         &1000&  -1.609 &-0.700 & 0.047  &0.761  &1.840 &0.051 &1.152 \\
		\cline{2-10}
		&&250 & -1.548 &-0.670 &-0.039  &0.675  &1.833 & 0.025 &1.058  \\
    &$\beta_0$&500& -1.632 &-0.665 & 0.001 & 0.661 & 1.754& 0.027& 1.047  \\
	&&1000&-1.629 &-0.690&  0.020&  0.698 & 1.627& 0.011& 1.009  \\
	\cline{2-10}
		&&250& 11.462 &14.603& 16.995 &19.573& 23.414 &17.155& 12.818\\
	   &$0.5\beta_0$& 500  &11.133 &13.624 &15.459& 17.592& 21.090&   15.697& 9.060\\
	&&1000&  9.192 &11.051 &12.578 &14.187 &16.904  &12.733 &5.560\\
	\cline{2-10}
		&&250& -3.166 &-2.469& -1.914& -1.315 &-0.260&  -1.860& 0.784\\
&$2\beta_0$&	500  & -3.136 &-2.258 &-1.628 &-1.037 &-0.029 &-1.604  &0.883  \\
	&&1000&-2.851 &-1.999 &-1.353 &-0.707  &0.207 &-1.342 &0.907\\
	\hline  \hline\multirow{12}{*}{1}
	&&250 &-1.825& -0.868&  0.042&  0.836&  2.389&  0.078& 1.661  \\
	&$\widehat{\beta}$&500  & -1.869& -0.770 & 0.027&  0.820&  2.092& 0.059& 1.412 \\
	&&1000 & -1.679& -0.719&  0.058&  0.762&  1.836&   0.045 & 1.199 \\
	\cline{2-10}
	&&250 & -1.548 &-0.670& -0.039 & 0.675 & 1.833 &0.025 &1.058 \\
	&$\beta_0$&500  &-1.632 &-0.665 & 0.001 & 0.661 & 1.754 &0.027 &1.047\\
	&&1000& -1.629 &-0.690&  0.020 & 0.698 & 1.627 & 0.011& 1.009  \\
	\cline{2-10}
	&&250 &28.654 &34.704 &39.574 &44.651 &52.477 & 39.856 & 51.483 \\
&$0.5\beta_0$&	500 &27.166 &31.848 &35.553& 39.808& 46.519&  35.992& 34.995  \\
	&&1000& 22.055 &25.398 &28.218 &31.256 &36.451 &28.565 &19.929  \\
	\cline{2-10}
	&&250 & -3.949 &-3.312 &-2.806 &-2.262 &-1.288  & -2.750& 0.666 \\
 &$2\beta_0$&	500 &-3.876 &-3.050& -2.445& -1.862& -0.925 &-2.427& 0.809  \\
	&&1000&-3.524& -2.675& -2.065& -1.419& -0.532& -2.047& 0.856  \\
	\hline \hline
		\multicolumn{3}{|c|}{$N (0,1)$} &-1.645 & -0.674 &  0 &  0.674 & 1.645&0&1\\
	\hline
\end{tabular}

}
\end{table}

\long\def\tablefour{

\begin{table}
\caption{Sample quantiles, mean  and variance of  $\sqrt{n/2}(\widehat{\sigma}_i^2(x)\beta_0^{1+2\kappa}/ (\sigma_0^2x^{1+2\kappa}) - 1)$, $i=1,\dots,1000$,
 for $x=\widehat{\beta}_i ,\beta_0, 0.5\beta_0,2\beta_0$ for different values of $\kappa$, when $\beta_0=0.6$ and $n=250, 500, 1000$, compared with the associated theoretical
 values of the standard Gaussian distribution.}\label{tab23}
\centering
\medskip
\scalebox{1}{
\begin{tabular}{|c|  c| c| c| c| c| c| c| c| c| }
	\hline
$\kappa$  &$x$& 	$n$ &  5$\%$ & 25$\%$ & 50$\%$ & 75$\%$ & 95$\%$ & Mean&Var \\
	\hline \multirow{12}{*}{0}
	&&250 &  -1.572 &-0.700 &-0.048  &0.748  &1.952 &0.065 &1.248 \\
&$\widehat{\beta}$	&500  &  -1.657 &-0.674 & 0.027 & 0.747 & 1.924& 0.072& 1.160 \\
	&&1000 &-1.599 &-0.664 & 0.052  &0.749  &1.777  &0.058 &1.072 \\
	\cline{2-10}
	&&250 &  -1.548& -0.670& -0.039 & 0.675 & 1.833 &0.025 &1.058 \\
	&  $\beta_0$& 500  &   -1.632 &-0.665 & 0.001 & 0.661 & 1.754 & 0.027& 1.047  \\
	&&1000&   -1.629 &-0.690 & 0.020& 0.698 & 1.627 & 0.011 &1.009\\
	\cline{2-10}
	&&250 &   1.394 &2.829 &3.871& 5.008 &6.761 &3.946  &2.625  \\
	& $0.5\beta_0$& 500 &  1.080& 2.369 &3.293& 4.218& 5.924&3.357 & 2.035 \\
	&&1000& 0.661 &1.801& 2.644 &3.430&4.746 &2.646 &1.532  \\
	\cline{2-10}
	&&250 & -2.113 &-1.285& -0.697& -0.037 & 1.150 &-0.623 & 0.964  \\
&$2\beta_0$	&500&-2.120 &-1.182 &-0.543 & 0.112 &1.172 & -0.504 &  0.994\\
	&&1000& -1.998& -1.099& -0.426 & 0.252 & 1.173&  -0.413 &0.977 \\
	\hline  \hline\multirow{12}{*}{0.5}
	&&250 & -1.594& -0.714& -0.037&  0.726  &2.012 & 0.061& 1.318 \\
&$\widehat{\beta}$& 500 & -1.656 &-0.690  &0.012 & 0.784 & 1.914  &0.065 &1.210 \\
	&&1000        & -1.606 & -0.661 & 0.040 & 0.765 & 1.777 & 0.054 & 1.100 \\
		\cline{2-10}
	&&250 &-1.548& -0.670& -0.039 & 0.675 & 1.833 &0.025 &1.058  \\
    &$\beta_0$&500& -1.632 &-0.665 & 0.001&  0.661&  1.754& 0.027& 1.047 \\
	&&1000& -1.629& -0.690&  0.020&  0.698 & 1.627& 0.011& 1.009 \\
	\cline{2-10}
	&&250& 5.907&  8.183 &10.117& 12.150& 15.293 &10.316  &8.162\\
	   &$0.5\beta_0$& 500  & 4.963  &6.887  &8.341  &9.853 &12.544  &8.455 &5.145\\
	&&1000& 3.738 &5.232 &6.415 &7.554 &9.443  &6.475 &3.173 \\
	\cline{2-10}
	&&250& -2.522 &-1.767& -1.167& -0.555 & 0.598& -1.112 & 0.89\\
&$2\beta_0$&	500  & -2.486 &-1.589 &-0.950 &-0.319  &0.708 &-0.915& 0.957 \\
	&&1000& -2.301 &-1.402 &-0.747 &-0.103 & 0.831 &-0.745 &0.954  \\
	\hline  \hline\multirow{12}{*}{1}
	&&250 & -1.642 &-0.742& -0.057&  0.717&  2.022&  0.059& 1.389  \\
	&$\widehat{\beta}$&500  & -1.719 &-0.691 &-0.001 & 0.803  &1.955 &0.059 &1.262 \\
	&&1000 &-1.647 &-0.689  &0.049  &0.756  &1.807  &0.049 &1.127 \\
	\cline{2-10}
	&&250 & -1.548 &-0.670 &-0.039  &0.675  &1.833  &0.025& 1.058  \\
	&$\beta_0$&500  &-1.632 &-0.665 & 0.001 & 0.661 & 1.754 &0.027& 1.047 \\
	&&1000& -1.629 &-0.690&  0.020  &0.698 & 1.627& 0.011& 1.009 \\
	\cline{2-10}
	&&250 &15.249 &19.717 &23.106 &27.216 &33.425 &23.612 &31.145  \\
&$0.5\beta_0$&	500 & 12.769& 16.135 &18.785 &21.548& 26.854 &19.080 & 17.975  \\
	&&1000&  9.750& 12.241 &14.111 &16.232 &19.773 &14.395& 10.058  \\
	\cline{2-10}
	&&250 &-3.004 &-2.306& -1.735& -1.145 &-0.077& -1.686& 0.808   \\
 &$2\beta_0$&	500 & -2.945& -2.083& -1.450& -0.829 & 0.148& -1.416& 0.911  \\
	&&1000& -2.692 &-1.819 &-1.170 &-0.531 & 0.419   &-1.157  &0.925 \\
	\hline \hline
		\multicolumn{3}{|c|}{$N (0,1)$} &-1.645 & -0.674 &  0 &  0.674 & 1.645&0&1\\
	\hline
\end{tabular}

}
\end{table}

}

Using the asymptotic distributions stated in Theorems~\ref{theo10}
and~\ref{theo11}, Table~\ref{tab22} compares
the sample quantiles of order $0.05, 0.25, 0.5, 0.75, 0.95$,
mean and variance of
$ \sqrt{n/2}\big(\widehat{\sigma}_i^2(x)\beta_0^{1+2\kappa}/(\sigma_0^2x^{1+2\kappa} )-1   \big)$
for
$x=\widehat{\beta}_i ,\beta_0, 0.5\beta_0, 2\beta_0$ with the
associated theoretical values of the standard Gaussian distribution,
for $\beta_0=0.4$, 
$\kappa=0, 0.5, 1$ and $n=250, 500, 1000$.

%

As expected, the best approximation is achieved overall when using the
true compact support, i.e., $x=\beta_0$, with little difference between the different values of $\beta$ and $\kappa$.
In the case of
$x=\widehat{\beta}_i$, the asymptotic distribution given in
Theorem~\ref{theo11} is a satisfactory approximation of the sample
distribution, visually improving when increasing $n$.
The value of $\kappa$ has less impact
compared to $\beta_0$. In general, smaller values lead to better
results.

When using compact supports that are too small or too large with respect to
the true compact support ($x= 0.5\beta_0, 2\beta_0$), the convergence
of the asymptotic distribution given in Theorem~\ref{theo10} is very
slow. In particular, when $x=0.5\beta_0$, the asymptotic approximation is not
satisfactory even for $n=1000$.  In other words,  confidence intervals for the
microergodic parameter, based on Theorem~\ref{theo10}, i.e., fixing an
arbitrary compact support, can be problematic when applied to finite
samples, even for large sample sizes.  We strongly recommend
jointly estimating variance and compact support and using the asymptotic
distribution give in Theorem~\ref{theo11} or, alternatively, choosing $\beta$ conservatively.

As a graphical example, Figure~\ref{cdf} compares the empirical CDF of
the ML estimates of the standardized microergodic parameter with the
CDF of the standard Gaussian distribution when $\sigma^2_0=1$,
$\kappa=0, 0.5, 1$, $\beta_0=0.6$ and $n=250, 500, 1000$.  Finally, our numerical results
are consistent with the results in \cite{Shaby:Kaufmann:2013}, in the
Mat{\'e}rn case.

\begin{figure}
\includegraphics[width=0.33\textwidth]{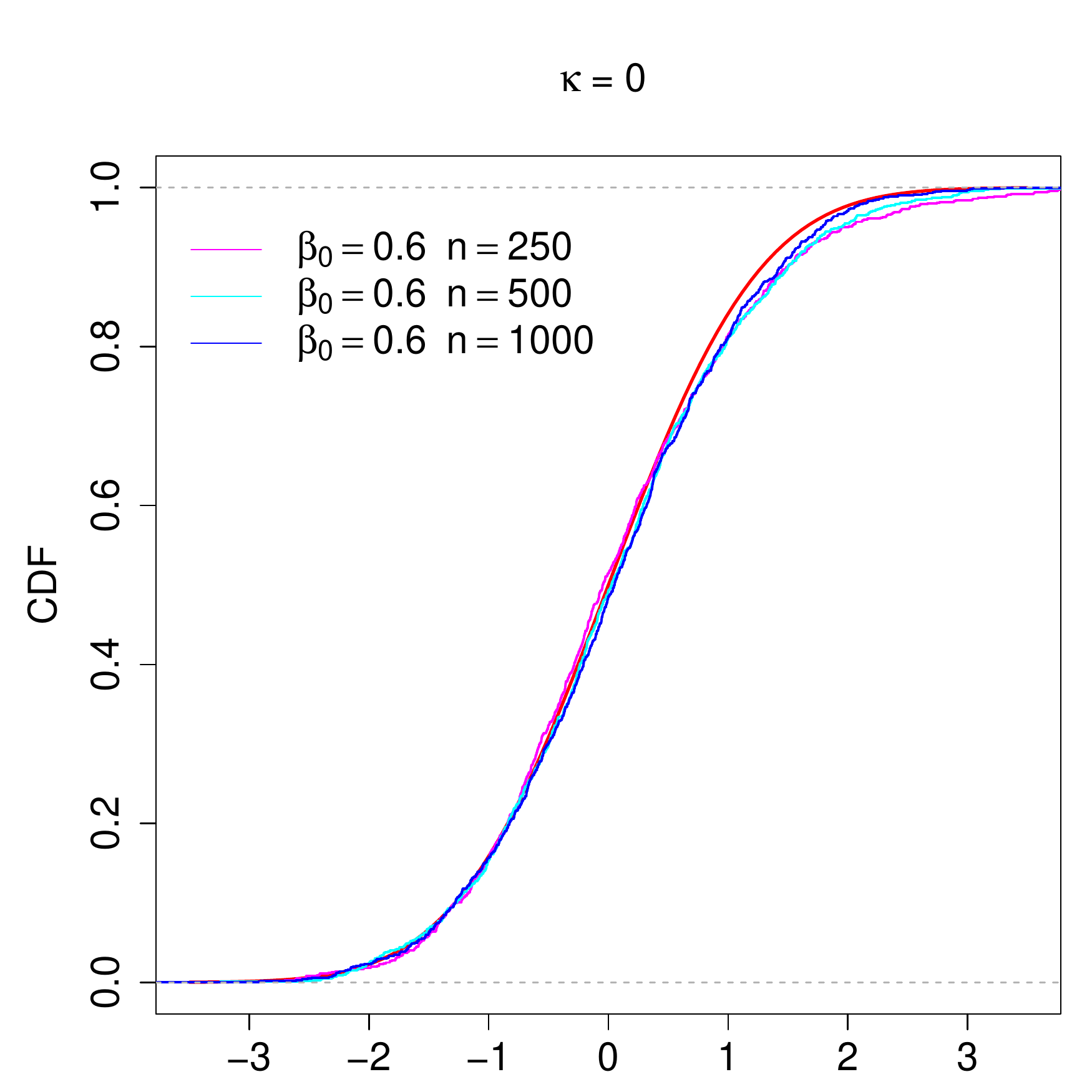}%
\hfill%
\includegraphics[width=0.33\textwidth]{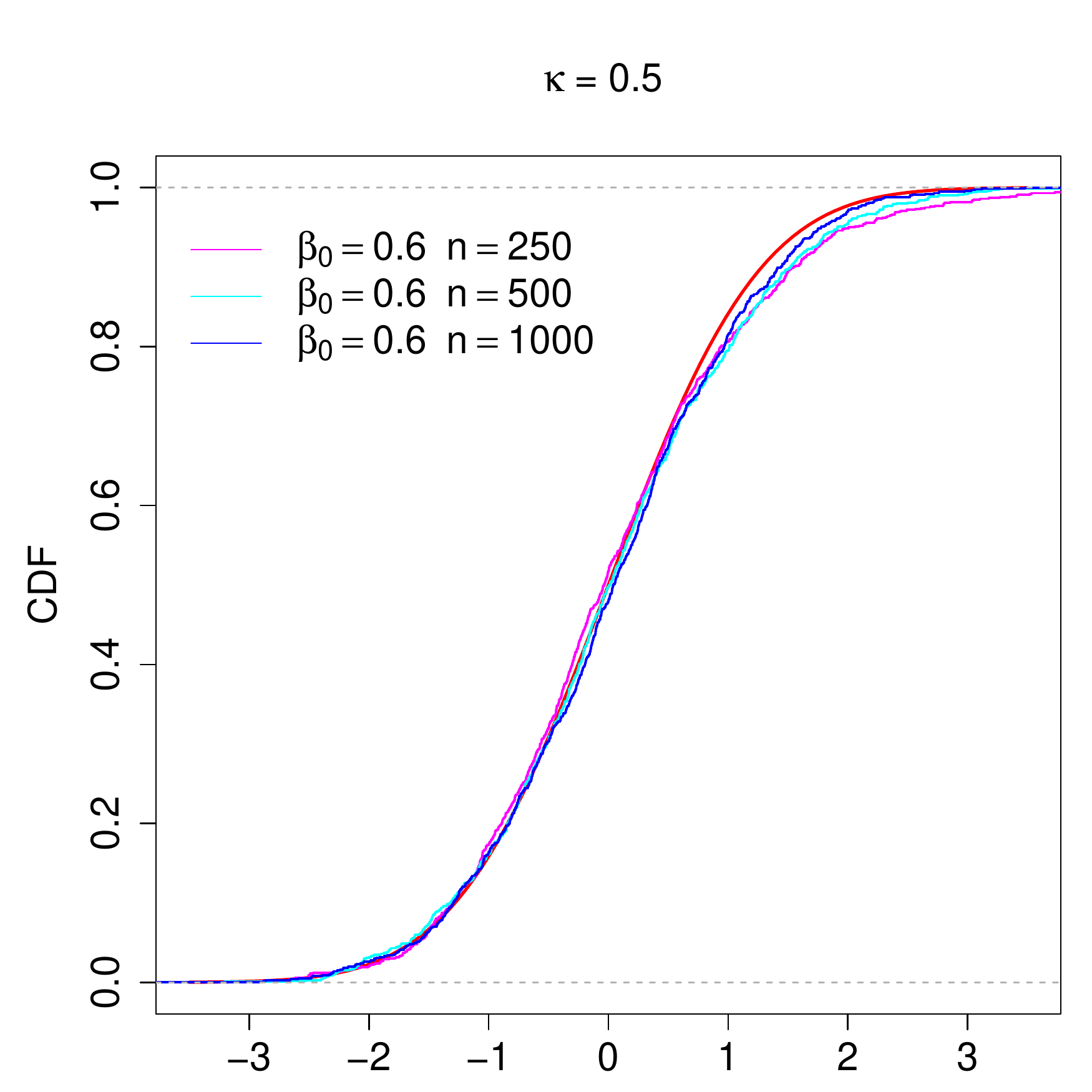}%
\hfill%
\includegraphics[width=0.33\textwidth]{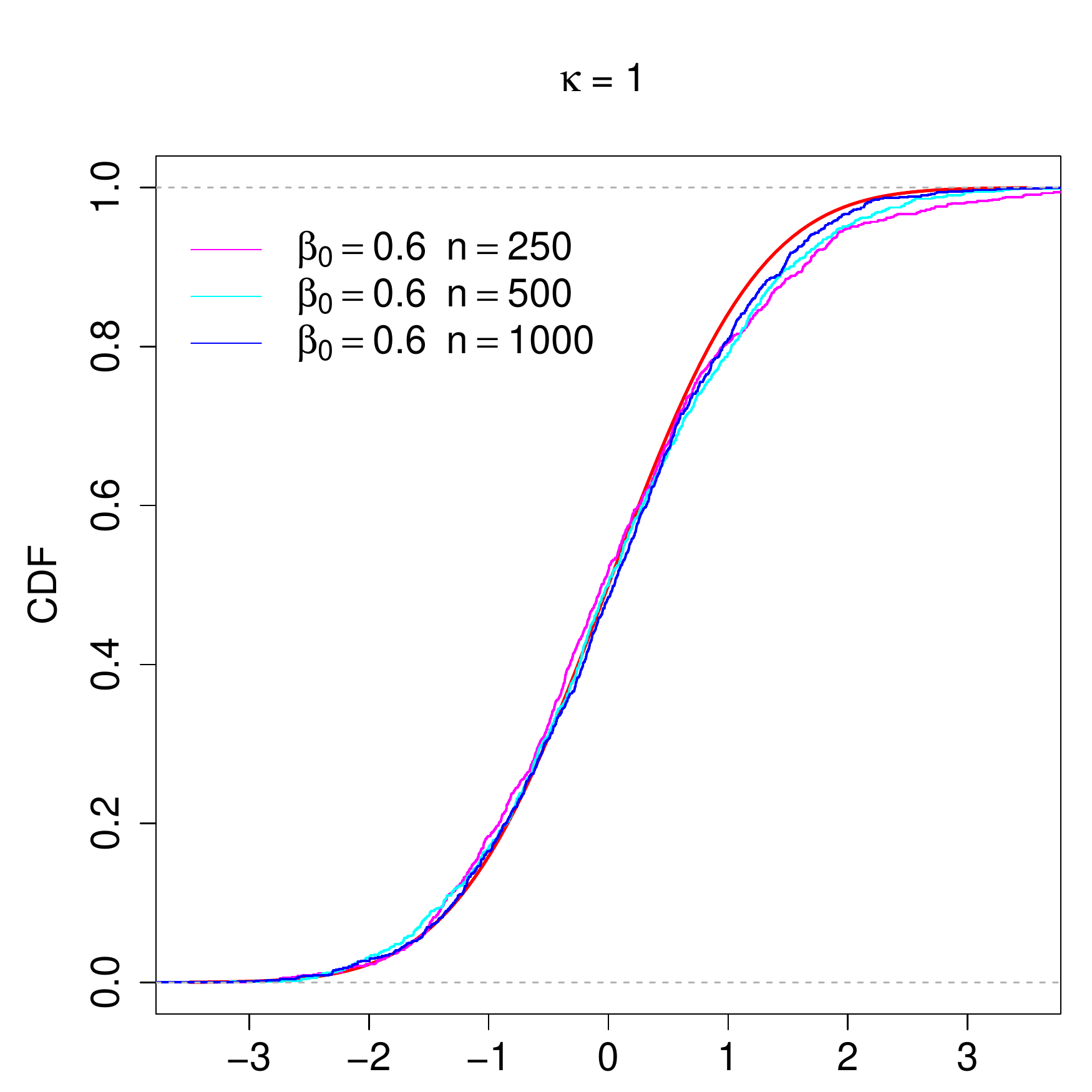}
\caption{Empirical CDF of the simulated ML estimation of the  standardized microergodic parameter
   vs CDF of a standard Gaussian distribution (red line) when $\sigma_0^2=1$, $\kappa=0, 0.5,1$ (from left to right), $\beta_0=0.6$
  and $n=250, 500, 1000$}\label{cdf}
\end{figure}

As for the second goal, using the results given in
Theorem~\ref{kauf_3} Points~2 and~4, we now specifically compare
asymptotic
prediction efficiency
 and asymptotically correct
estimation of prediction variance using ratios~\eqref{kauf3_11}
and~\eqref{kauf3_3} respectively.  As a benchmark, we also consider the
same ratios using a tapered Mat{\'e}rn model.  

More precisely, we consider a   Mat{\'e}rn model ${\cal M}_{\nu,\alpha,\sigma^2_2}$ setting
$\sigma_2^2=1$, $\nu=0.5, 1, 1.5$ and  $\alpha=y/c_{\nu}$ with $y=0.1,0.2,0.4$ if $\nu=0.5$,
$y=0.101,0.202,0.404$ if $\nu=1$ and $y=0.097,0.193,0.385$ if $\nu=1.5$.
Here  $c_{\nu}$ is a scalar depending on $\nu$ such that ${\cal M}_{\nu,1,1}(r)$
is lower than 0.05 when $r>c_{\nu}$ that is, $y$ is the practical range.

Let us define the ratios~\eqref{kauf3_11} and~\eqref{kauf3_3} as
$\U_1(\beta_1$) and $\U_2$,  respectively.  For each $\nu$ and $\alpha$,
we randomly select $n_j= 50, 100, 250, 500, 1000$, $j=1,\ldots,500$
location sites without replacement from the perturbed grid in
Figure~\ref{grid}.
For each $j$, we compute the ratio $\U_{1j}(\beta_1$) and the ratio
$\U_{2j}$, $j=1,\ldots,500$, using closed form expressions in
Equation~\eqref{mse_miss} and~\eqref{msetrue} when predicting the
location site $(0.26,0.48)^{\prime}$ (black dot in Figure~\ref{grid}).
Specifically for each $\U_{2j}$, following the conditions in
Theorem~\ref{kauf_3} Point~4, we set $\sigma_1^2=1$, $\kappa=\nu-1/2$,
 $\mu=\lambda+1.5$. The ``equivalent'' compact support is obtained as:
\begin{equation*}
   \beta_1^*=\left[\left(\mu \frac{\Gamma(2\kappa+\mu+1)}{\Gamma(\mu+1)} \right)\frac{\sigma_1^2  \alpha^{-2\nu}}{\sigma_2^2}\right]^{1/(1+2\kappa)}.
\end{equation*}
Under this specific setting the ``equivalent'' compact support
associated to the (varying with $\nu$) practical range is
approximately $\beta_1^*=0.1, 0.2, 0.4$, irrespectively of $\nu$.
Figure~\ref{grid} shows the location sites involved in the prediction using GW functions with $\beta_1^*=0.1, 0.2, 0.4$.

For each $\U_{1j}(\beta)$, following Theorem~\ref{kauf_3} Point~2, we
fix $\kappa=\nu-1/2$, $\mu=\lambda+1.5$ and $\beta=\beta_1^*$.  Then,
to investigate the effect of considering an arbitrary compact
support on the convergence of ratio~\eqref{kauf3_11}, we also
consider, $\U_{1j}(0.2\beta_1^*)$ and $\U_{1j}(5\beta_1^*)$.
For each combination of $\nu$, $\alpha$, Table~\ref{ratios} shows the
empirical means $\bar{\U}_{1}(x\beta_1^*)=\sum_{j=1}^{500}
\U_{1j}(x\beta_1^*)/500$ for $x=1, 0.5, 2$, and
$\bar{\U}_{2}=\sum_{j=1}^{500} \U_{2j}/500$ when increasing $n$.

As a benchmark, we also compute the empirical means replacing the GW
model with a tapered Mat{\'e}rn covariance model, that is, considering
the model ${\cal M}_{\nu,\alpha,\sigma^2_2} K_{x\beta_1^*}$, and we
denote these means by $\bar{\U}^T_{1}(x\beta_1^*)$, $x=1, 0.5, 2$ and
$\bar{\U}^T_{2}$.  Here, $K_{x\beta_1^*}$ is a known compactly supported
correlation function called taper function.  Following
\cite{Furrer:2006}, as taper function, we use
$K_{x\beta_1^*}=\varphi_{2,0,x\beta_1^*,1}$ if $\nu=0.5$,
$K_{x\beta_1^*}=\varphi_{3,1,x\beta_1^*,1}$ if $\nu=1$ and
$K_{x\beta_1^*}=\varphi_{4,2,x\beta_1^*,1}$ if $\nu=1.5$.  for $x=1,
0.5, 2$.

These specific choices of taper functions guarantee the
convergence of ratios~\eqref{kauf3_11} and~\eqref{kauf3_3}, using a
tapered Mat{\'e}rn model instead of the GW model (see Theorem~2 in
\citealp{Furrer:2006}).  In Table~\ref{ratios}, the percentages of
nonzero elements in the covariance matrices are also reported in all
scenarios and for each $n$ when using the compact support $\beta_1^*$.

Table~\ref{ratios} shows that $\bar{\U}_2$ clearly overall outperforms
$\bar{\U}_2^{T}$ in terms of speed of convergence in particular when
increasing $\beta_1^*$.  This implies that in terms of finite sample,
if the Matern model is the state of nature, prediction efficiency and
correct estimation of prediction variance are better achieved when
predicting with the (compatible) GW model with respect to the
so-called naive CT predictor \citep{Furrer:2006}, sharing the same
compact support.

Comparing $\bar{\U}_{1}(x\beta_1^*)$ with $\bar{\U}^T_{1}(x\beta_1^*)$
for $x=1, 0.5, 2$ note that when $x=1$, $\bar{\U}_{1}(\beta_1^*)$
overall slightly outperforms $\bar{\U}^T_{1}(\beta_1^*)$ and when
$x=0.5$, the convergence of both ratios seems to be very slow, in
particular for larger $\nu$. This suggests that taking an
arbitrary compact support too small with respect to the ``equivalent''
compact support $\beta_1^*$ can seriously affect the prediction
efficiency both for tapered Mat{\'e}rn and GW models. This kind of
problem disappears when $x=2$, as expected.
By the tapering effect, i.e., inducing a covariance with an apparent shorter range,   $\bar{\U}^T_{1}(2\beta_1^*)$ slightly outperforms
$\bar{\U}_{1}(2\beta_1^*)$.



\section{Concluding Remarks}\label{7}
Parameter estimation for
 interpolation of spatially
or spatio-temporally correlated random processes is used in many areas
and often requires particular models or careful implementation. In
recent years the dataset sizes have steadily increased such that
straightforward statistical tools are computationally too expensive to
use. The use of covariance functions with an (inherent or induced)
compact support, leading to sparse matrices, is a very accessible and
scalable approach.  In this paper we studied estimation and prediction
of Gaussian fields with covariance models belonging to the GW class,
under fixed domain asymptotics.

Specifically, we first characterize the equivalence of two Gaussian
measures with GW models, and then we establish strong
consistency and asymptotic Gaussianity of the ML estimator of the associated microergodic
parameter when considering both an arbitrary and an estimated compact
support.  Simulation results show that for a finite sample, the choice
of an arbitrary compact support can result in a very poor
approximation of the asymptotic distribution.  These results are
consistent with those in \cite{Shaby:Kaufmann:2013} in the Mat{\'e}rn
case.

In a second aspect, we give a sufficient condition for the equivalence of two
Gaussian measures with Mat{\'e}rn and GW model and we
study the effect on prediction when using these two covariance
models under fixed domain asymptotics.
A first consequence of our results is that GW model
is more than a valid competitor of the Mat{\'e}rn model. It allows, as
in the Mat{\'e}rn case, a continuous parameterization of smoothness of
the underlying Gaussian field and, under fixed domain asymptotics,
prediction and mean square error prediction obtained with a Mat{\'e}rn
model can be achieved using a GW model inducing an equivalent
Gaussian measure, using our condition.
For this reason, we advocate the GW class when working with (not necessarily) large or
huge spatial datasets since well established and implemented algorithms for sparse matrices can be
used when estimating the covariance parameters and/or predicting at
unknown locations (e.g., \citealp{Furrer:Sain:2010}).
Alternatively, for  covariances which are analytic away from the origin as  the Mat{\'e}rn model, 
 in some  circumstances a hierarchical factorization scheme as proposed for instance in  \cite{Ambikasaran:2016},
 is a possible solution 
in order to handle  sample sizes that cannot be handled by straightforward Cholesky factorization.

As the theoretical and numerical results illustrate, CT for prediction
is essentially an obsolete approach. When comparing both approaches with the same sensible compact support, the tapered CT is less efficient.
For estimation, one has to
distinguish between a so-called one-taper or two-taper approach, i.e.,
a proper likelihood or an estimating function approach,
\cite{Kaufman:Schervish:Nychka:2008}.
Fixing again the support, a GW model can approximate a Mat\'ern covariance function much better than a tapered one.
Thus, the GW is in an estimation setting superior to a one-taper CT.
In both approaches, one needs to be aware of the resulting
biases, which can be substantial. In the case of (kriging) predictions
based on plug-in estimates, the biases are largely canceled
\citep{Furr:Bach:Du:16}.
Finally, the two-taper approach is conceptually a different approach and, as it is  computationally very expensive, it would not be fair to compare it with the GW model.

Similarly to the Mat\'ern model with smoothness parameter different
to $p+1/2$, $p\in\mathbb{N}$, the GW does not have a closed form expression when its smoothness parameter is different to $p$,
and low level software implementations are needed for a
computationally efficient use.

\appendix


\bibliographystyle{imsart-nameyear.bst}
\bibliography{mybib}

\pagestyle{empty}

\begin{sidewaystable}
\caption{$\bar{\U}_1(x)$, $\bar{\U}^T_1(x)$, $x=0.5\beta_1^*, 2\beta_1^*,\beta_1^*$ and $\bar{\U}_2$, $\bar{\U}^T_2$, as defined in Section~\ref{6},
when considering a Mat{\'e}rn model  with  increasing practical range $y$,  smoothness parameter $\nu$  and $n$.
Here $\beta_1^*$ is the  compact support parameter  of the GW model computed using the  equivalence condition.
The column $\%$ reports the  mean of percentages of non-zero elements in the covariance matrices involved when considering $\beta_1^*$.
}
 \label{ratios}
\medskip\centering\scriptsize
\tabcolsep2pt\hspace*{-12mm}\begin{tabular}{|r|l|rrrrrrrr|rrrrrrrr|rrrrrrrr|r|}
  \hline
  && \multicolumn{8}{|c|}{$\nu=0.5$}&\multicolumn{8}{|c|}{$\nu=1$}&\multicolumn{8}{|c|}{$\nu=1.5$}&\raisebox{0pt}[11pt][7pt]{}\\
\hline
&$n$   &\rotatebox{90}{$\bar{\U}_1(0.5\beta_1^*)$}&\rotatebox{90}{$\bar{\U}_1(2\beta_1^*)$}&\rotatebox{90}{$\bar{\U}_1(\beta_1^*)$}&\rotatebox{90}{$\bar{\U}^T_1(0.5\beta_1^*)$~}&\rotatebox{90}{$\bar{\U}^T_1(2\beta_1^*)$}&\rotatebox{90}{$\bar{\U}_1^{T}(\beta_1^*)$}&\rotatebox{90}{$\bar{\U}_2$}&\rotatebox{90}{$\bar{\U}^{T}_{2}$}  &\rotatebox{90}{$\bar{\U}_1(0.5\beta_1^*)$}&\rotatebox{90}{$\bar{\U}_1(2\beta_1^*)$}&\rotatebox{90}{$\bar{\U}_1(\beta_1^*)$}&\rotatebox{90}{$\bar{\U}^T_1(0.5\beta_1^*)$}&\rotatebox{90}{$\bar{\U}^T_1(2\beta_1^*)$~}&\rotatebox{90}{$\bar{\U}_1^{T}(\beta_1^*)$} &\rotatebox{90}{$\bar{\U}_2$}&\rotatebox{90}{$\bar{\U}^{T}_{2}$} &\rotatebox{90}{$\bar{\U}_1(0.5\beta_1^*)$}&\rotatebox{90}{$\bar{\U}_1(2\beta_1^*)$}&\rotatebox{90}{$\bar{\U}_1(\beta_1^*)$}&\rotatebox{90}{$\bar{\U}^T_1(0.5\beta_1^*)$}&\rotatebox{90}{$\bar{\U}^T_1(2\beta_1^*)$}&  \rotatebox{90}{$\bar{\U}_1^{T}(\beta_1^*)$}&\rotatebox{90}{$\bar{\U}_2$}  &\rotatebox{90}{$\bar{\U}^{T}_{2}$}&\rotatebox{90}{\%}\\
\hline
    & $\alpha=\frac{y}{c_{\nu}}$& \multicolumn{8}{|c|}{$y=0.1$}&\multicolumn{8}{|c|}{$y=0.101$}&\multicolumn{8}{|c|}{$y=0.097$}&\raisebox{0pt}[12pt][8pt]{}\\
 \hline

\parbox[t]{4mm}{\multirow{5}{*}{\rotatebox{90}{$\beta_1^*=0.1$}}}
 &$50$  & 1.051&1.029    &1.009 &1.051&1.008&1.025&1.019  &1.029&
                1.098&1.047 &1.018&1.101&1.009&1.041&1.038 &1.048&
                1.124&1.054&1.024&1.134 &1.012&1.057&1.050&1.056&4.67 \\
               &$100$      &  1.096 &1.043&1.014&1.096&1.013& 1.043& 1.035& 1.056&
               1.189&1.076 &1.028 &1.195&1.013&1.072& 1.073  & 1.097&
               1.246&1.095&1.039&1.266&1.019 &1.105&1.098&1.112& 3.70\\
              &$250$     &1.182&   1.046& 1.019 &1.183&1.018 & 1.069&1.064 &1.118&
                                1.379& 1.097&1.038 &1.393&1.016&  1.121&1.138& 1.204&
                                1.521&1.156&1.059&1.567&1.025& 1.197& 1.197&1.241&3.12 \\
              &$500$       &1.267&1.030& 1.015 &1.268&1.016 &1.077&1.081 &1.211 &
                                 1.608&1.065&1.030 &1.639 &1.011& 1.132&1.187 & 1.372&
                                 1.928&1.116&1.051& 2.039&1.020&1.253&1.300& 1.481&2.92 \\
                &$1000$ &1.325&1.015  &1.009 &1.330&1.010 &1.061&1.073  & 1.332 &
                                 1.858&1.032&1.016 &1.923 &1.005&1.088& 1.168&1.586  &
                                 2.549&1.054& 1.025 &2.820&1.008&1.209 &1.300&1.877&2.82 \\
\hline
    & $\alpha=\frac{y}{c_{\nu}}$& \multicolumn{8}{|c|}{$y=0.2$}&\multicolumn{8}{|c|}{$y=0.202$}&\multicolumn{8}{|c|}{$y=0.193$}&\raisebox{0pt}[12pt][8pt]{}\\
    \hline

        \parbox[t]{4mm}{\multirow{5}{*}{\rotatebox{90}{$\beta_1^*=0.2$}}}
        &$50$  &1.151& 1.044   &1.016 &1.157&1.016&1.058&1.053  &1.134&
                      1.316&1.090& 1.032&1.326&1.014&1.094&1.119 &1.217&
                       1.448 &1.139&1.048&1.505&1.021 &1.149&1.177&1.288&11.95 \\
         &$100$    &1.209&1.032  &1.013  &1.221&1.015&1.066&1.068  & 1.235&
                           1.471&1.068 &1.027 &1.491&1.011&1.103&1.162  &1.377&
                           1.730& 1.120&1.045&1.848&1.018&1.186&1.266 &1.534&11.04 \\
          &$250$ &1.227&1.012&1.007 &1.247&1.008  & 1.046&1.060&1.397&
                         1.578& 1.026& 1.013 &1.614 &1.004&1.061&1.146&  1.590&
                         2.085&1.049&1.022 &2.363&1.007& 1.137 & 1.271&1.954& 10.48\\
          &$500$  &1.152&1.005& 1.003&1.178&1.003& 1.021&  1.040 &1.513 &
                          1.415&1.009&1.005 &1.447&1.002 &1.022&1.092&1.625&
                          1.945&1.017& 1.009 &2.321 &1.004 &1.051 &1.174&2.069&10.27\\
          &$1000$ &1.061&1.002&1.001 &1.083 &1.001 &1.007& 1.024 &1.586 &
                           1.145 &1.003& 1.002&1.152&1.002&1.014&1.052&1.497&
                            1.358&1.005&1.003  &1.554 &1.003 & 1.029&1.093&1.728&10.18 \\

\hline
    & $\alpha=\frac{y}{c_{\nu}}$& \multicolumn{8}{|c|}{$y=0.4$}&\multicolumn{8}{|c|}{$y=0.404$}&\multicolumn{8}{|c|}{$y=0.385$}&\raisebox{0pt}[12pt][8pt]{}\\
    \hline
    \parbox[t]{4mm}{\multirow{5}{*}{\rotatebox{90}{$\beta_1^*=0.4$}}}
        &$50$     & 1.208&1.016 &1.008 & 1.226&1.010&1.050&1.060  &1.372&
                           1.507 &1.035&1.016&1.530&1.005&1.072&1.148&1.519&
                           1.900&1.066&1.027&2.088&1.010&1.152& 1.271&1.823&34.97 \\
         &$100$     &1.151& 1.006&1.004  &1.174&1.004&1.026 & 1.041&1.499 &
                             1.399&1.013&1.007 &1.421&1.002&1.030&1.100  &1.583 &
                              1.846 &1.024&1.011&2.106 &1.004&1.071&1.196&2.006&34.18 \\
         &$250$ &1.050&1.001&1.001 & 1.066&1.001 &1.006 &1.020 & 1.598 &
                        1.128 &1.003& 1.002 &1.141&1.001 &1.011& 1.048&1.454 &
                         1.328&1.006 & 1.003& 1.491&1.003&1.028&1.091 &1.691& 33.90 \\
        &$500$ &1.013&1.000& 1.000&1.019  &1.000&  1.002& 1.011&1.633 &
                        1.024&1.001&1.001 &1.044&1.001&1.009&1.025& 1.314 &
                        1.053&1.002&1.001&1.121&1.001 &1.019&1.047 &1.373& 33.66 \\
        &$1000$ &1.003&1.000&1.000 &1.005&1.000& 1.000&  1.006& 1.649 &
                        1.003&1.000& 1.000 &1.034 &1.000 &1.006&1.014&1.208  &
                         1.005&1.000&1.000 &1.077&1.000& 1.009&1.024&1.184& 33.59 \\

\hline
\end{tabular}
\end{sidewaystable}
\end{document}